\DeclareSymbolFont{extraitalic} {U}{zavm}{m}{it}
\DeclareMathSymbol{\stigma}{\mathord}{extraitalic}{168}
\newtheorem*{theorem*}{Theorem}
\newtheorem{theorem}{Theorem}[subsection]
\newtheorem{proposition}[theorem]{Proposition}
\newtheorem{definition}[theorem]{Definition}
\newtheorem*{proposition*}{Proposition}
\newtheorem{lemma}[theorem]{Lemma}
\newtheorem{corollary}[theorem]{Corollary}
\theoremstyle{remark}
\newtheoremstyle{boldremark}
{\dimexpr\topsep/2\relax} 
{\dimexpr\topsep/2\relax} 
{}          
{}          
{\bfseries} 
{.}         
{.5em}      
{}          
\theoremstyle{boldremark}\newtheorem{bremark}[theorem]{Remark}
\newtheoremstyle{bolddefinition}
{\dimexpr\topsep/2\relax} 
{\dimexpr\topsep/2\relax} 
{}          
{}          
{\bfseries} 
{.}         
{.5em}      
{}          
\theoremstyle{bolddefinition}
\newtheorem*{conjecture*}{Conjecture}
\newcommand*\leftdash{\rotatebox[origin=c]{-45}{$\dabar@\dabar@\dabar@$}}
\newcommand*\rightdash{\rotatebox[origin=c]{45}{$\dabar@\dabar@\dabar@$}}
\newcommand*\mondash[1]{\rotatebox[origin=c]{45}{$\dabar@\dabar@\dabar@$}{\kern-1.5ex\raisebox{-.5ex}{}_{#1}}\kern.5ex}
\newcommand{\Fr}{\operatorname{Fr}}
\newcommand{\Qlbar}{\overline{\mathbb{Q}}_{\ell}}
\newcommand{\Fqbar}{\overline{\mathbb{F}}_q}
\newcommand{\Fq}{\mathbb{F}_q}
\newcommand{\Ge}{G_{\operatorname{enh}}}
\newcommand{\Ta}{T_{\operatorname{adj}}}
\newcommand{\yy}{\mathcal{Y}}
\newcommand{\barGe}{\overline{G_{\operatorname{enh}}}}
\newcommand{\obarGe}{\overset{\circ}{\overline{G_{\operatorname{enh}}}}}
\newcommand{\barTa}{\overline{T_{\operatorname{adj}}}}
\newcommand{\F}{\mathcal{F}}
\newcommand{\G}{\mathcal{G}}
\newcommand{\cO}{\mathcal{O}}
\newcommand{\hc}{\mathfrak{hc}}
\newcommand{\cc}{\underline{\mathbb{k}}}
\newcommand{\DD}{\hat{\Delta}}
\newcommand{\dd}{\hat{\delta}}
\newcommand{\NN}{\hat{\nabla}}
\newcommand{\T}{\hat{\mathcal{T}}}
\newcommand{\Spgr}{\Sigma}
\newcommand{\Hu}{\mathbf{1}}
\newcommand{\rsim}{\xrightarrow{
   \,\smash{\raisebox{-0.65ex}{\ensuremath{\scriptstyle\sim}}}\,}}
\newcommand{\cC}{\mathcal{C}}
\newcommand{\cD}{\mathcal{D}}
\newcommand{\cA}{\mathcal{A}}
\newcommand{\cB}{\mathcal{B}}
\newcommand{\cM}{\mathcal{M}}
\newcommand{\cZ}{\mathcal{Z}}
\newcommand{\pro}{\operatorname{pro}}
\newcommand{\M}{\mathscr{M}}
\newcommand{\hM}[2]{{}_{#1}\hat{\mathscr{M}}_{#2}}
\newcommand{\Perv}{{\mathcal{P}}}
\newcommand{\cPerv}{\hat{\mathcal{P}}}
\newcommand{\Hecke}[1]{\mathcal{H}^{(#1)}}
\newcommand{\cofree}[2]{#1-\operatorname{cofree}_{#2}}
\newcommand{\comod}[2]{#1-\operatorname{comod}_{#2}}
\newcommand{\cS}{\mathcal{S}}
\newcommand{\Hom}{\operatorname{Hom}}
\newcommand{\Spec}{\operatorname{Spec}}
\newcommand{\D}{\mathbb{D}}
\newcommand{\ustar}{\,\underline{\star}\,}
\newcommand{\cf}{\operatorname{cofree}}
\newcommand{\Id}{\operatorname{Id}}
\newcommand{\bistar}[1]{{\star}_{#1}}
\newcommand{\Ad}{\operatorname{Ad}}
\titleformat{\subsection}[runin]
{\normalfont\bfseries}{\thesubsection}{1em}{}
\titleformat{\subsubsection}[runin]
{\normalfont\bfseries}{\thesubsection}{1em}{}
\newcommand{\Addresses}{{
  \bigskip
  \footnotesize

  R.~Bezrukavnikov, \textsc{Department of Mathematics, Massachusetts Institute of Technology,
    Cambridge, Massachusetts, USA}\par\nopagebreak
  \textit{E-mail address}: \texttt{bezrukav@math.mit.edu}
  
    \medskip

  A.~Ionov, \textsc{Department of Mathematics, Boston College,
    Chestnut Hill, Massachusetts, USA}\par\nopagebreak
  \textit{E-mail address}: \texttt{ionov@bc.edu}

  \medskip

  K.~Tolmachov, \textsc{University of Edinburgh,
    Edinburgh, United Kingdom}\par\nopagebreak
  \textit{E-mail address}: \texttt{tolmak@khtos.com}
  
    \medskip

  Y.~Varshavsky, \textsc{Institute of Mathematics, Hebrew University, Givat-Ram, Jerusalem, Israel}\par\nopagebreak
  \textit{E-mail address}: \texttt{vyakov@math.huji.ac.il}

}}
\begin{document} \title{Equivariant derived category of a reductive group as a categorical center}

\author{Roman Bezrukavnikov, Andrei Ionov,\\ Kostiantyn Tolmachov, Yakov Varshavsky} \date{}
\maketitle \abstract{We prove that the adjoint equivariant derived category of a reductive group $G$ is equivalent to the appropriately defined monoidal center of the torus-equivariant version of the Hecke category. We use this to give new proofs, independent of sheaf-theoretic set up, of the fact that the Drinfeld center of the abelian Hecke category is equivalent to the abelian category of unipotent character sheaves; and of a characterization of strongly-central sheaves on the torus.}
\newpage
\tableofcontents
\section{Introduction}
Let $G$ be a connected reductive group, either over $\mathbb{C}$ or split over a finite field. In this paper, we study the equivariant, with respect to the adjoint aciton of $G$ on itself, constructible derived category of sheaves on $G$, which we denote $D^b(G/_{\Ad}G)$, not specifying the precise sheaf-theoretic setting for now. The category $D^b(G/_{\Ad}G)$ is well-known to be a braided monoidal category. We relate this category to the appropriately defined center of another triangulated monoidal category $\Hecke{1}$, which we call the $T$-equivariant Hecke category (for the adjoint action of a maximal torus $T \subset G$).

Similarly, we relate the derived category $D^b_{\mathfrak{C}}(G) \subset D^b(G/_{\Ad}G)$ of unipotent character sheaves on $G$ to the appropriately defined center of the monodromic $T$-equivariant Hecke category with monodromic parameter $\lambda$, $\Hecke{1}_{\lambda}$. 

The idea to relate the category of character sheaves to the categorical center of the monoidal Hecke category is not new. In \cite{bezrukavnikovCharacterDmodulesDrinfeld2012}, first named author, Finkelberg and Ostrik proved that the abelian category $\mathfrak{C}$ of unipotent character sheaves is equivalent to the Drinfeld center of the abelian category of Harish-Chandra D-modules. We reprove their result and extend it to other sheaf-theoretic settings.

In \cite{ben-zviCharacterTheoryComplex2009}, authors prove that the appropriately defined $\infty$-category of unipotent character sheaves is the categorical center of both equivariant and monodromic versions of the $\infty$-Hecke category in the setting of $D$-modules. While this work was in final stages of preparation, preprint \cite{holicharactersheveshomfly2023} appeared, where the Drinfeld center of the Hecke category is computed in a general $\infty$-categorical setting. Our approach is different in that we don't leave the world of triangulated categories (except when considering the statement about abelian Drinfeld center). We deduce our results from a statement valid for all equivariant sheaves on the group (Theorem \ref{sec:hecke-categ-char}). We were informed by David Ben-Zvi that a close statement in the $\infty$-categorical D-module setting follows from the results of \cite{ben-zvigunninghamorem2018}. 

In the upcoming work, second and third named authors, together with the authors of \cite{holicharactersheveshomfly2023}, are planning to use this fact to define a mixed Koszul self-duality for the derived category of character sheaves. This has applications, in particular, to symmetries in Khovanov-Rozansky homology theories, and the generalizations of those theories, as in \cite{gorskyHilbertSchemesIfication2022}. Two-periodic version of such a Koszul duality equivalence in characteristic 0 appears in \cite{ben-zviCharacterTheoryComplex2009}.

Finally, we use our results to give a new proof of a variant of the vanishing result proved in  \cite{chenConjectureBravermanKazhdan2022}, eliminating a restriction on the characteristic.  

We also mention that the description of the asymptotic abelain category of character sheaves as a categorical center was obtained in \cite{lusztigTruncatedConvolutionCharacter2014}, \cite{lusztigNonunipotentCharacterSheaves2015}. 

\subsection{Organization of the paper.} In Section \ref{sec:equiv-deriv-categ} we define the Hecke categories we are working with, the variant of the categorical center we are using, and state our main result for triangulated categories.

In Section \ref{sec:sheaves-group-as} we recall the notion of a separable comonad and relevant results regarding comodules over comonads in triangulated categories, following \cite{balmerSeparabilityTriangulatedCategories2011}.

In Section \ref{sec:dbgg-as-categorical} we prove our main result for triangulated categories.

In Section \ref{sec:character-sheaves-as} we recall some results about monodromic categories and character sheaves, state the variant of the main result for the derived category of character sheaves and prove the result about the Drinfeld center of the abelian Hecke category.
We also study vanishing properties for central sheaves on a torus.

In the Appendix~\ref{sec:sheav-vinb-semigr}, following an idea of V. Drinfeld, we give a proof of the t-exactness of convolution on the horocycle space using the properties of convolution on the Vinberg semigroup attached to $G$.   

\subsection{Acknowledgments.} We would like to thank Xin Jin, Anton Mellit and Grigory Papayanov for helpful discussions. We would like to thank Colton Sandvik for pointing out a missing assumption on characteristic in the earlier version. R.B and Y.V were partially supported by the BSF grant 2020189. A.I. was funded by RFBR, project number 20-01-00579. K.T. is supported by the EPSRC programme grant EP/R034826/1. Y.V was partially supported by the ISF grant 2019/21. 

\section{Equivariant derived category of $G$ as a categorical center.}
\label{sec:equiv-deriv-categ}
\subsection{Notations.} Fix primes $\ell \neq p$. Let $q = p^n$ for $n \in \mathbb{Z}_{>0},$ and write $\Fq$ for the finite field with $q$ elements.

We will work with the following sheaf-theoretic settings.

\begin{enumerate}[label=(\Roman*),ref=(\Roman*)]
\item\label{set:c} For a stack $X$ defined over $\mathbb{C}$, let $D^b(X)$ be the bounded algebraically constructible derived category of sheaves on $X$ with analytic topology, with coefficients in $\mathbb{k} = \mathbb{C}$ (more generally, any field of characteristic 0).
\item\label{set:dmod} For a stack $X$ defined over $\mathbb{C}$, let $D^b(X)$ be the bounded derived category of holonomic $D$-modules on $X$.
\item\label{set:p} For a stack $X$ defined over $\mathbb{C}$, let $D^b(X)$ be the bounded constructible derived category of sheaves on $X$ with coefficients in a field $\mathbb{k}$ of characteristic $\ell$.

  In settings \ref{set:c},\ref{set:dmod}, \ref{set:p} we use the definition of sheaves on stacks as in \cite{bernsteinEquivariantSheavesFunctors2006}. 

\item\label{set:adic} For a stack $X$ defined over an algebraically closed field of characteristic $p > 0$, let $D^b(X)$ be the bounded
constructible derived category of $\Qlbar$-sheaves on $X$.
\item\label{set:mixed} For a stack $X$ defined over $\Fq$, let $D^b(X)$ be the bounded constructible
mixed derived category of $\Qlbar$-sheaves on $X$.  

  In settings \ref{set:adic},\ref{set:mixed}, we use the definition of sheaves on stacks as in \cite{laszloSixOperationsSheaves2008a}.

\end{enumerate}

 We define $\mathbb{k}$ to be the field $\mathbb{C}$ in the settings \ref{set:c} and \ref{set:dmod}, any field of characteristic $\ell$ in the setting \ref{set:p} and $\Qlbar$ in the settings \ref{set:adic}, \ref{set:mixed}. We write $\operatorname{pt}$ for the spectrum of the field of definition of our stacks in each of the cases. We write $\cc_X$ for the constant sheaf on $X$ (D-module $\cO_X$ in \ref{set:dmod}).   

\subsection{Hecke categories.} 
\label{sec:hecke} Let $G$ be a split connected reductive group defined over $\Fq$ (in cases \ref{set:adic}, \ref{set:mixed}) or a connected reductive group over $\mathbb{C}$ (in all other cases). Fix a Borel subgroup $B \subset G$, a split maximal torus $T \subset B$. Let $U$ be the unipotent radical of $B$. Let $X = G/B$ be the flag variety of $G$, $Y = G/U$ be the basic affine space of $G$. The natural projection $Y \to X$ is a $T$-torsor with respect to the right multiplication action of $T$ on $G/U$.

Let $W$ be the Weyl group of $G$. In the case \ref{set:p} we always assume that the characteristic of the field of coefficients does not divide the order $|W|$ of $W$.

Consider the right diagonal action of $T$ on $Y^2$. Let
\[
  \yy = (Y \times Y)/T
\]
and let $G$ act on $\yy$ by the diagonal left multiplication. The categories $D^b(\yy),$ $D^b(G\backslash \yy)$ are equipped with a monoidal structure via the following formula. Consider the diagram
\begin{equation}
  \label{eq:7}
  \begin{tikzcd}
    & Y^3/T \arrow[ld,"p_{12}"']\arrow[rd,"p_{23}"]\arrow[r,"p_{13}"]  & \yy \\
\yy &                                      & \yy 
  \end{tikzcd}
\end{equation}
Here $p_{ij}$ stands for the projection $Y^3 \to Y^2$ along factors $i,j$ and $T$ acts on $Y^3$ diagonally on the right, while $G$ diagonally on the left. We define the convolution product $-\star_1-$ on $D^b(\yy)$ or $D^b(G\backslash \yy)$ as
\[
  \mathcal{A}\star_1\mathcal{B} := p_{13!}(p_{12}^*\mathcal{A}\otimes p_{23}^*\mathcal{B}).
\]
Denote $\Hecke{1}:=D^b(G\backslash \yy)$. The category $\Hecke{1}$ has an alternative description as a (mixed in the relevant settings)  derived category of $\Ad T$-equivariant sheaves on $U\backslash G/U$. Let $\Hu$ stand for the monoidal unit of this category.

Consider the space $\yy^{(2)} = Y^4/T^2$ where the right $T^2$-action on $Y^4$ is defined by
\begin{equation}
  \label{eq:6}
   (x_1U,x_2U,x_3U,x_4U)\cdot (t,z) = (x_1tU,x_2zU,x_3zU,x_4tU).
\end{equation}
Let $G^2$ act on $\yy^{(2)}$ via the formula
\[
  (g,h)\cdot (x_1U,x_2U,x_3U,x_4U) = (gx_1U,gx_2U,hx_3U,hx_4U).
\]
The categories $D^b(\yy^{(2)}), D^b(G^2\backslash\yy^{(2)})$ are equipped with monoidal structures via the following formula. Consider the diagram
\[
  \begin{tikzcd}
    & Y^6/T^3 \arrow[ld,"p_{1256}"']\arrow[rd,"p_{2345}"]\arrow[r,"p_{1346}"]  & \yy^{(2)} \\
\yy^{(2)} &                                      & \yy^{(2)} 
  \end{tikzcd}
\]
Here $p_{ijkl}$ stands for the projection $Y^6 \to Y^4$ along factors $i,j,k,l$ and $T^3$ acts on $Y^6$ on the right according to the formula
\begin{multline*}
  (x_1U,x_2U,x_3U,x_4U,x_5U,x_6U)\cdot  (u,v,w)=\\
 =(x_1uU,x_2vU,x_3wU,x_4wU,x_5vU,x_6uU). 
\end{multline*} 
while $G^2$ acts on $Y^6$ on the left according to the formula
\begin{multline*}
  (g,h)\cdot (x_1U,x_2U,x_3U,x_4U,x_5U,x_6U) =\\
 =(gx_1U,gx_2U,gx_3U,hx_4U,hx_5U,hx_6U). 
\end{multline*} 
Define the convolution product $-\star_2-$ on $D^b(\yy^{(2)})$ or $D^b(G^2\backslash\yy^{(2)})$ as
\begin{equation}
  \label{eq:8}
  \mathcal{A}\star_2\mathcal{B} := p_{1346!}(p_{1256}^*\mathcal{A}\otimes p_{2345}^*\mathcal{B}).
\end{equation}
Denote $\Hecke{2}:=D^b(G^2\backslash\yy^{(2)})$. 

The category $\Hecke{1}$ is a module category for the monoidal category $\Hecke{2}$. The action is given by the ``two-sided convolution'' defined via the following formula. Consider the diagram 
\[
  \begin{tikzcd}
    & \yy^{(2)} \arrow[ld,"p_{23}"']\arrow[rd,"p_{14}"] & \\
\yy &                                      & \yy 
  \end{tikzcd}
\]
For $\mathcal{B} \in \Hecke{2},\mathcal{A}\in\Hecke{1},$ we define
\[
  \mathcal{B}\bowtie\mathcal{A} = p_{14!}(\operatorname{For}^{G^2}_G(\mathcal{B}) \otimes p_{23}^*(\mathcal{A})).
\]
Here $\operatorname{For}^{G^2}_G$ stands for the functor forgetting the equivariance with respect to the diagonal embedding $G \to G^2$. 

Recall from \cite[Chapter 7]{etingofTensorCategories2016} that for a monoidal category $\cA$ and a category $\cM$ we have a notion of $\cM$ being a (left or right) module category over $\cA$. If $\cM, \mathcal{N}$ are two module categories over $\cA$, let $\operatorname{Fun}_{\cA}(\cM,\mathcal{N})$ stand for the category of $\cA$-module functors from $\cM$ to $\mathcal{N}$, see \cite[Definitions 7.2.1, 7.2.2]{etingofTensorCategories2016}. 

Write $\mathcal{Z}\Hecke{1}:=\operatorname{Fun}_{\Hecke{2}}(\Hecke{1},\Hecke{1})$ for the category of module-endofunctors of $\Hecke{1}$ over $\Hecke{2}$. Recall that, by definition, an object of $\mathcal{Z}\Hecke{1}$ is a functor $F:\Hecke{1}\to\Hecke{1}$ together with natural isomorphisms 
\[
  s_{\mathcal{B},\mathcal{A}}^F:\mathcal{B}\bowtie F(\mathcal{A}) \rsim F(\mathcal{B}\bowtie \mathcal{A}),
\]
satisfying certain compatibility conditions. A morphism between functors $F, G$ in $\mathcal{Z}\Hecke{1}$ is a natural transformation $\tau: F \to G$ such that the diagrams of the form
\begin{equation}
  \label{eq:9}
  \begin{tikzcd}
    \mathcal{B}\bowtie F(\mathcal{A}) \ar[d,"\mathcal{B}\, \bowtie\,\tau_\mathcal{A}"'] \ar[r, "s_{\mathcal{B},\mathcal{A}}^F"] & F(\mathcal{B} \bowtie \mathcal{A}) \ar[d,"\tau_{\mathcal{B} \bowtie \mathcal{A}}"]  \\
    \mathcal{B} \bowtie G(\mathcal{A}) \ar[r, "s^G_{\mathcal{B},\mathcal{A}}"']&   G(\mathcal{B} \bowtie \mathcal{A})    
  \end{tikzcd}
\end{equation}
are commutative. 

Let $G/_{\Ad}G$ stand for the quotient stack of $G$ with respect to the adjoint action. The category $D^b(G/_{\Ad}G)$ is monoidal with respect to the convolution operation defined by the following formula. Consider the diagram
\[
  \begin{tikzcd}
    & G\times G\arrow[ld,"p_{1}"']\arrow[rd,"p_{2}"]\arrow[r,"m"]  & G \\
G &                                      & G 
  \end{tikzcd}
\]
For $\mathcal{A},\mathcal{B} \in D^b(G/_{\Ad}G),$ we define
\[
  \mathcal{A}\star\mathcal{B} = m_{!}(p_1^*\mathcal{A} \otimes p_{2}^*(\mathcal{B})).
\]

Our main result is
\begin{theorem}
  \label{sec:hecke-categ-char}
 In all of the settings \ref{set:c}-\ref{set:mixed}, there is an equivalence of monoidal categories 
 \begin{equation}
   \label{eq:1}
   \tilde{a}:D^b(G/_{\Ad}G) \rsim \mathcal{Z}\Hecke{1}.
 \end{equation}
\end{theorem}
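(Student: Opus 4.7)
The plan is to construct $\tilde{a}$ geometrically as a convolution with the adjoint action of $G$ on $\yy$, and then realise both $D^b(G/_{\Ad}G)$ and $\cZ\Hecke{1}$ as comodules over a common separable comonad on $D^b(\yy)$, so that the Balmer formalism from Section~\ref{sec:sheaves-group-as} yields the equivalence directly at the triangulated level without recourse to $\infty$-categorical enhancement.

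To define $\tilde{a}$ I would use the translation map $m: G \times \yy \to \yy$, $m(g, [y_1 U, y_2 U]) = [y_1 U, g y_2 U]$, and set
\[
\tilde{a}(\F)(\cA) = m_!(p_G^* \F \otimes p_{\yy}^* \cA).
\]
The intertwining identity $h \cdot m(g, y) = m(h g h^{-1}, h \cdot y)$ together with adjoint equivariance of $\F$ and proper base change promotes $\tilde{a}(\F)(\cA)$ to an object of $\Hecke{1}$; associativity of the $G$-action yields the monoidal isomorphism $\tilde{a}(\F_1 \star \F_2) \simeq \tilde{a}(\F_1) \circ \tilde{a}(\F_2)$. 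The central structure $s^{\tilde{a}(\F)}_{\cB, \cA}: \cB \bowtie \tilde{a}(\F)(\cA) \rsim \tilde{a}(\F)(\cB \bowtie \cA)$ is another base-change isomorphism along a diagram combining $m$ with the maps $p_{14}, p_{23}$ of~\eqref{eq:8}; the key point is that $m$ twists only the second $Y$-coordinate and therefore commutes with the crossed $G^2$-action on $\yy^{(2)}$ underlying the $\bowtie$-action.

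To promote $\tilde{a}$ to an equivalence, I would invoke the comonad descent machinery of Section~\ref{sec:sheaves-group-as}. Forgetting $G$-equivariance gives $\operatorname{For}: \Hecke{1} \to D^b(\yy)$ with right adjoint $\Av$, producing a comonad $\Sigma = \operatorname{For} \circ \Av$ on $D^b(\yy)$ for which $\Hecke{1} \simeq \Sigma\text{-comod}$. The composition of $\tilde{a}$ with evaluation at $\Hu$ and then $\operatorname{For}$ is the standard horocycle transform $\mathrm{hc}: D^b(G/_{\Ad}G) \to D^b(\yy)$, and it inherits a $\Sigma$-comodule structure. Symmetrically, composing the evaluation-at-$\Hu$ functor $\cZ\Hecke{1} \to \Hecke{1}$ with $\operatorname{For}$ and repackaging the $\Hecke{2}$-central data produces a second comonad $\Sigma'$ on $D^b(\yy)$ realising $\cZ\Hecke{1} \simeq \Sigma'\text{-comod}$.

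The main obstacle is identifying $\Sigma$ with $\Sigma'$ and checking that $\tilde{a}$ intertwines the resulting comodule structures. Unwinding the definitions, both comonad structures encode the same descent datum: adjoint $G$-equivariance of a sheaf on $G$ and $\Hecke{2}$-centrality of an element of $\Hecke{1}$ both amount to compatibility with the family of twists $m(g,-)$ up to diagonal $G$-equivariance, and the required comparison is a direct base-change identity on the diagram relating $m$, $p_{14}$ and $p_{23}$. Verification of separability of these comonads (for which the hypothesis $\operatorname{char}(\mathbb{k}) \nmid |W|$ in setting~\ref{set:p} ensures the splittings needed for the relevant induction functors), together with Balmer's reconstruction theorem, then produces the equivalence uniformly across settings~\ref{set:c}--\ref{set:mixed}.
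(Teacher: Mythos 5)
Your construction of $\tilde{a}$ itself agrees with the paper's: pushing $\F\boxtimes\cA$ forward along $G\times\yy\to\yy$, $(g,y_1U,y_2U)\mapsto(y_1U,gy_2U)$ is exactly $\cA\star_1\hc(\F)$, and the central structure you sketch is the one built in Proposition \ref{sec:sheaves-group-as-1}. The gap is in the descent step. You propose to realize $\Hecke{1}$ as comodules over the comonad $\operatorname{For}\circ\Av$ on $D^b(\yy)$, where $\operatorname{For}$ forgets $G$-equivariance. But Balmer's formalism (Proposition \ref{sec:co-monads-triang}) needs the unit $\Id_{\Hecke{1}}\to\Av\circ\operatorname{For}$ to be naturally split, and for a connected, positive-dimensional $G$ it is not: averaging over $G$ produces the full equivariant cohomology of $G$, the unit is not a split monomorphism, and no hypothesis on $\operatorname{char}\mathbb{k}$ can help because $G$ is not finite. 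Recovering equivariant categories from non-equivariant ones by (co)monadic descent is exactly the step that requires an $\infty$-categorical Barr--Beck argument, which this paper deliberately avoids. The paper's comonad lives one level up: it is the Springer comonad $\cS=\hc\circ\chi$ on $\Hecke{1}$ itself, whose unit is split because $\chi\circ\hc\simeq\Spgr\star(-)$ and $\delta_e$ is a $W$-invariant direct summand of the Springer sheaf (Proposition \ref{sec:springer-comonad-2}); this is a finite-group ($W$) phenomenon, and it is here, not in any induction along $G$, that the assumption $\operatorname{char}\mathbb{k}\nmid|W|$ in setting \ref{set:p} is used. That splitting gives $D^b(G/_{\Ad}G)\simeq\comod{\cS}{\Hecke{1}}$.

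The second half of your plan --- a comonad $\Sigma'$ on $D^b(\yy)$ with $\cZ\Hecke{1}\simeq\comod{\Sigma'}{D^b(\yy)}$, identified with $\Sigma$ by a base-change identity --- is not an argument but a restatement of the theorem: the entire content is that $\Hecke{2}$-centrality of a module endofunctor is equivalent to descent to an adjoint-equivariant sheaf on $G$. You exhibit no adjunction between $\cZ\Hecke{1}$ and $D^b(\yy)$ (or $\Hecke{1}$) with split unit, and $\cZ\Hecke{1}$ is a priori only a plain category of module functors, not even triangulated, so there is nothing to feed into Balmer's machinery on that side. In the paper this is where the real work happens: faithfulness of the evaluation $\varepsilon$ via Lemma \ref{sec:centr-categ-as} (every $\cA$ is $L(\cA)\bowtie\Hu$, so a central transformation is determined by its value at $\Hu$), the identity $\tilde{b}\circ\tilde{a}\simeq\mathfrak{F}$ giving full faithfulness of $\tilde{a}$, and essential surjectivity up to direct summands via the nontrivial Proposition \ref{sec:centr-categ-as-1}, $\tilde{a}\circ\chi\circ\varepsilon(F)\simeq F\circ\tilde{a}(\Spgr)$, whose proof requires the explicit coalgebra $\bar{\cS}\in\Hecke{2}$ of Proposition \ref{sec:spring-comon-bimod} and the compatibility Lemma \ref{sec:proof-prop-refs}, followed by idempotent completeness and the Springer splitting. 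None of these steps has a counterpart in your proposal, so the argument as written does not close.
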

Note that a priori $\mathcal{Z}\Hecke{1}$ is not a triangulated category. Define the evaluation functor
\[
  \varepsilon: \mathcal{Z}\Hecke{1} \to \Hecke{1}, F \mapsto F(\Hu), 
\]
where $\Hu$ stands for the monoidal unit in $\Hecke{1}$.
During the course of the proof of Theorem ~\ref{sec:hecke-categ-char} we will deduce the following
\begin{corollary}
  We say that the triangle in $\mathcal{Z}\Hecke{1}$ is distinguished, if it becomes a distinguished triangle in $\Hecke{1}$ after the application of $\varepsilon$. The category $\mathcal{Z}\Hecke{1}$ equipped with the set of distinguished triangles defined in this way and an obvious shift functor $[1]$ is a triangulated category, and $\tilde{a}$ is an exact equivalence.
\end{corollary}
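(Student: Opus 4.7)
The plan is to obtain the triangulated structure on $\mathcal{Z}\Hecke{1}$ not by transporting it blindly through $\tilde{a}$, but rather as an immediate byproduct of how $\tilde{a}$ is constructed in the first place. As signalled in Section~\ref{sec:sheaves-group-as}, the strategy behind Theorem~\ref{sec:hecke-categ-char} is to identify $\mathcal{Z}\Hecke{1}$ with the category of comodules over a certain comonad on $\Hecke{1}$ encoding the two-sided convolution action of $\Hecke{2}$. Under this identification the functor $\varepsilon$ is nothing but the forgetful functor to $\Hecke{1}$, while the shift $[1]$ on $\mathcal{Z}\Hecke{1}$ (pointwise application of $[1]$ in $\Hecke{1}$) is transported to the pointwise shift on comodules.

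Once this is set up, the corollary follows from the formal properties of comodules over \emph{separable} comonads on triangulated categories: by the results of \cite{balmerSeparabilityTriangulatedCategories2011} recalled in Section~\ref{sec:sheaves-group-as}, the category of comodules over a separable comonad on a triangulated category carries a canonical triangulated structure in which the distinguished triangles are precisely those that become distinguished after the forgetful functor, and whose shift is inherited pointwise from the base. This is literally the content of the corollary, once we know that the comonad at hand is separable and that $\tilde{a}$ is compatible with the horocycle transform $\hc:D^b(G/_{\Ad}G)\to\Hecke{1}$ in the sense that $\varepsilon\circ\tilde{a}\simeq\hc$, so that exactness of $\tilde{a}$ matches the exactness of $\hc$.

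The principal obstacle is therefore the same one that drives the proof of the main theorem: establishing separability of the relevant comonad on $\Hecke{1}$ in each of the sheaf-theoretic settings \ref{set:c}--\ref{set:mixed}, which in practice means exhibiting an explicit section of the comultiplication built from the geometry of the correspondence in diagram \eqref{eq:7}. Once separability is in place, the identification of $\tilde{a}$ with an equivalence to comodules is formal, and both the exactness statement and the characterization of distinguished triangles via $\varepsilon$ fall out simultaneously from Balmer's formalism, with no further argument required beyond checking that the pointwise shift of module-endofunctors matches the shift on comodules (which is immediate from the construction).
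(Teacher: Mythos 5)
Your proposal follows essentially the paper's own route: the corollary is extracted from the comonadic machinery underlying the proof of Theorem~\ref{sec:hecke-categ-char}, with distinguished triangles in $\mathcal{Z}\Hecke{1}$ detected through $\varepsilon$ because $\varepsilon\circ\tilde{a}\simeq\hc$ and because, for an adjunction with split unit, triangles upstairs are detected by the left adjoint (Proposition~\ref{sec:co-monads-triang}, after Balmer). Two caveats, though. First, what is formal once the unit $\Id\to\chi\circ\hc\simeq\Spgr\star(-)$ is split is only the equivalence $\mathfrak{F}:D^b(G/_{\Ad}G)\rsim\comod{\cS}{\Hecke{1}}$ of Corollary~\ref{sec:springer-comonad}; identifying $\mathcal{Z}\Hecke{1}$ itself with this comodule category (equivalently, showing $\tilde{b}$, hence $\tilde{a}$, is an equivalence) is \emph{not} formal --- it is the content of Theorem~\ref{sec:hecke-categ-char}, resting on Propositions~\ref{sec:sheaves-group-as-1} and~\ref{sec:centr-categ-as-1}. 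Since the corollary is deduced in the course of that proof, you are entitled to use this, but your assertion that ``the identification of $\tilde{a}$ with an equivalence to comodules is formal'' misplaces where the real work lies. Second, the separability input is not a section of the comultiplication built from the convolution diagram~\eqref{eq:7}: it is the Springer-theoretic splitting $\delta_e\simeq\Spgr^W$ of the unit of the $(\hc,\chi)$-adjunction attached to the horocycle correspondence (Propositions~\ref{sec:springer-comonad-2}, \ref{sec:springer-comonad-1}), which the paper quotes as classical rather than treats as the principal obstacle. Finally, Balmer's formalism by itself yields a pre-triangulation on comodules; the full triangulated structure (octahedra) on $\mathcal{Z}\Hecke{1}$, and the exactness of $\tilde{a}$, are obtained by transporting the triangulation of $D^b(G/_{\Ad}G)$ along $\tilde{a}$ and using Proposition~\ref{sec:co-monads-triang}~\ref{item:2} to see that the transported distinguished triangles are exactly the $\varepsilon$-detected ones.
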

\begin{bremark}
  The category $\mathcal{Z}\Hecke{1}$ is not, a priori, braided monoidal. The equivariant derived category $D^b(G/_{\Ad}G)$, on the other hand, is canonically braided, see \cite{boyarchenkoCharacterSheavesUnipotent2013}. It would be interesting to see if the category $\mathcal{Z}\Hecke{1}$ can also be equipped with the braiding without reference to Theorem \ref{sec:hecke-categ-char}.

  Note that the equivalence involving the abelian Drinfeld center, proved below as Theorem \ref{sec:abel-categ-char}, is compatible with the braiding.
\end{bremark}
\section{Sheaves on a group as comodules over the Springer comonad}
\label{sec:sheaves-group-as}
\subsection{Comonads in triangulated categories.}\label{sec:comon-triang-categ-8}
All categories we are working with are assumed to be additive (in fact, $\mathbb{k}$-linear).

For a pair of functors $F:\mathcal{C}\rightleftarrows \mathcal{D}:G$ we write $F\dashv G$ to indicate that $F$ is left adjoint to $G$. 

For a comonad $S$ on a category $\mathcal{D}$, let $\comod{S}{\cD}$ stand for the category of $S$-comodules in $\mathcal{D}$. We have an adjoint pair $F_S\dashv G_S$
\[
  F_S:\comod{S}{\cD} \rightleftarrows\cD:G_S,
\]
such that $F_S\dashv G_S$ with $F_S$ being a functor forgetting the comodule structure. 

\begin{definition}
  A comonad $S$ is called separable, if the comultiplication map $\Delta: S \to SS$ admits a natural retraction \[s: SS\to S, \enskip s\Delta = \operatorname{id},\] as a map of $S$-bicomodules, that is such that the following diagram is commutative:
\[
  \begin{tikzcd}
      & SS \arrow[ld, "S\Delta"'] \arrow[rd,"\Delta S"] \arrow[d, "s"'] & \\
      SSS \arrow[rd, "sS"'] & S \arrow[d, "\Delta"'] & SSS \arrow[ld,"Ss"] \\
      & SS &
  \end{tikzcd}
\]
\end{definition}

Let $F:\mathcal{C}\rightleftarrows \mathcal{D}:G$ be a pair of functors, $F \dashv G$. We say that the unit transformation 
\(  \eta: \Id_{\mathcal{C}} \to GF\)
is naturally split if there exists a transformation $\sigma: GF \to \Id_{\mathcal{C}}$ with $\sigma\circ\eta = \operatorname{id}$.  

\begin{proposition}
 \label{sec:comon-triang-categ-6} 
Let $F:\mathcal{C}\rightleftarrows \mathcal{D}:G$ be a pair of functors, $F \dashv G$. 
 \begin{enumerate}[label=\alph*),ref=(\alph*)]
\item If the unit transformation  
\[
  \eta: \Id_{\mathcal{C}} \to GF
\]
is naturally split, then the comonad $S = FG$ is separable.

\item If the comonad $S$ is separable, then the unit transformation
\[
  \eta_S: \Id_{\comod{S}{\cD}} \to G_SF_S
\]
is naturally split.
 \end{enumerate}
\end{proposition}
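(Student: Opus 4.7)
For part (a), given a natural splitting $\sigma\colon GF\to\Id_{\mathcal{C}}$ of $\eta$, I would set $s:=F\sigma G\colon FGFG\to FG$. Since the comultiplication of the comonad $S=FG$ coming from the adjunction is $\Delta=F\eta G$, the retraction property is immediate: $s\circ\Delta = F(\sigma\circ\eta)G = \operatorname{id}_{FG}$. Both bicomodule axioms then reduce, after expansion, to equalities of the form $F(\eta_{GX}\circ\sigma_{GX}) = F(\cdots)$; one of them follows from the naturality of $\sigma$ applied to $\eta_{GX}\colon GX\to GFGX$, and the other from the naturality of $\eta$ applied to $\sigma_{GX}\colon GFGX\to GX$. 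This direction is a routine whiskering check.

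For part (b), given a separable comonad $S$ with bicomodule section $s\colon SS\to S$, I would define a candidate natural transformation $\sigma_S\colon G_SF_S\to\Id_{\comod{S}{\cD}}$ by
\[
  \sigma_{S,(M,\rho)}\ :=\ \epsilon_M\circ s_M\circ S\rho\ \colon\ SM\to SSM\to SM\to M,
\]
where $\epsilon\colon S\to\Id_{\cD}$ is the counit of the comonad. The splitting identity $\sigma_{S,(M,\rho)}\circ\rho=\operatorname{id}_M$ will be immediate, using coassociativity $S\rho\circ\rho=\Delta_M\circ\rho$, separability $s_M\circ\Delta_M=\operatorname{id}_{SM}$, and the comodule counit axiom $\epsilon_M\circ\rho=\operatorname{id}_M$. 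Naturality of $\sigma_S$ in $(M,\rho)$ will follow from naturality of $s$ and $\epsilon$ applied to a comodule morphism $f\colon(M,\rho)\to(N,\rho')$.

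The main obstacle, as I see it, is verifying that $\sigma_{S,(M,\rho)}$ really is a morphism of $S$-comodules $(SM,\Delta_M)\to(M,\rho)$, i.e.\ that $\rho\circ\sigma_{S,(M,\rho)} = S\sigma_{S,(M,\rho)}\circ\Delta_M$. For the right-hand side I would use naturality of $\Delta$ at $\rho$ to rewrite $SS\rho\circ\Delta_M$ as $\Delta_{SM}\circ S\rho$, then the bicomodule axiom $Ss\circ\Delta S=\Delta\circ s$ and the counit identity $S\epsilon\circ\Delta=\operatorname{id}_S$ to reduce it to $s_M\circ S\rho$. For the left-hand side I would first invoke naturality of $\epsilon$ in the form $\rho\circ\epsilon_M=\epsilon_{SM}\circ S\rho$, then naturality of $s$ applied to $\rho$ (giving $S\rho\circ s_M=s_{SM}\circ SS\rho$), then coassociativity of the comodule once more, then the \emph{other} bicomodule axiom $sS\circ S\Delta=\Delta\circ s$, and finally the other counit identity $\epsilon S\circ\Delta=\operatorname{id}_S$; this reduces the left-hand side also to $s_M\circ S\rho$. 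Both bicomodule axioms and both counit identities of the comonad are used here, and keeping the whiskering bookkeeping straight is the only delicate aspect.
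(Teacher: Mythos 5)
Your proof is correct and matches the paper's approach: the retraction $s = F\sigma G$ in part (a) is precisely the formula the paper records, and the two bicomodule identities reduce, as you say, to naturality of $\sigma$ at $\eta_{GX}$ and naturality of $\eta$ at $\sigma_{GX}$ respectively. For part (b) you have correctly reconstructed the verification that the paper delegates to \cite[Proposition 3.11]{balmerSeparabilityTriangulatedCategories2011}: the candidate $\sigma_{S,(M,\rho)}=\epsilon_M\circ s_M\circ S\rho$ is indeed a comodule map with $\sigma_{S,(M,\rho)}\circ\rho=\operatorname{id}_M$, and your bookkeeping with the two bicomodule axioms and both counit triangles goes through.
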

\begin{proof}
  For the fact that $S$ is separable if and only if $\eta_S$ is naturally split see, for example, \cite[Proposition 3.11]{balmerSeparabilityTriangulatedCategories2011} and references therein. We have that $s:FGFG\to FG$ is given by the formula $s = F\sigma G$, where $\sigma$ is the splitting of $\eta$.
\end{proof}
\begin{bremark}
  Note that the separability of $S = FG$ does not imply the splitting of $\eta$. For example, for any separable $S$, $F$ and $G$ can be chosen such that $F$ is not faithful.  
\end{bremark}
\begin{proposition}
  \label{sec:co-monads-triang}
  Assume that $F:\mathcal{C}\rightleftarrows \mathcal{D}:G$ is a pair of exact functors, $F \dashv G$, between idempotent complete pre-triangulated categories $\mathcal{C}, \mathcal{D}$ such that the unit transformation 
  \(  \eta: \Id_{\mathcal{C}} \to GF\) is naturally split. Define a comonad $S = FG$. Then
  \begin{enumerate}[label=\alph*),ref=(\alph*)]
  \item\label{item:2} A triangle $X \to Y \to Z \to X[1]$  is distinguished in $\mathcal{C}$ if and only if a triangle $FX \to FY \to FZ \to FX[1]$ is distinguished in $\mathcal{D}$.
  \item \label{item:1} The natural functor $\mathcal{C} \to S-\operatorname{comod}_{\mathcal{D}}$, induced by the functor $F$, is an equivalence of categories. 
  \end{enumerate}
\end{proposition}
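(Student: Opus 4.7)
The plan is to derive both parts from a single structural fact: the comparison functor
\[
  \Phi : \mathcal{C} \to \comod{S}{\mathcal{D}}, \quad X \mapsto (FX, F\eta_X),
\]
is an equivalence of categories. Once this is established, part \ref{item:2} follows by transporting the triangulated structure on $\comod{S}{\mathcal{D}}$ constructed in \cite{balmerSeparabilityTriangulatedCategories2011}: since $S$ is separable by Proposition \ref{sec:comon-triang-categ-6}, that theorem equips $\comod{S}{\mathcal{D}}$ with a pre-triangulated structure in which a triangle is distinguished iff its image under the forgetful functor $F_S$ is distinguished in $\mathcal{D}$, and the relation $F_S \circ \Phi \cong F$ matches this criterion with the one in \ref{item:2}.

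To prove $\Phi$ is fully faithful, I first observe the splitting identity $\sigma \circ \eta = \operatorname{id}$ together with naturality of $\eta$ and $\sigma$ yields, for any $f : X \to Y$ in $\mathcal{C}$,
\[
  f = \sigma_Y \circ GF(f) \circ \eta_X.
\]
This immediately gives faithfulness and conservativity of $F$ (and hence faithfulness of $\Phi$): from $Ff = 0$ we get $GFf = 0$, whence $f = 0$; and if $Ff$ is an iso then so is $GFf$, and a symmetric manipulation $g := \sigma_X \circ (GFf)^{-1} \circ \eta_Y$ produces a two-sided inverse for $f$. For fullness of $\Phi$, given an $S$-comodule morphism $f : (FX, F\eta_X) \to (FY, F\eta_Y)$, I set $g := \sigma_Y \circ Gf \circ \eta_X$; the comodule axiom $F\eta_Y \circ f = FGf \circ F\eta_X$ combined with $\sigma \eta = \operatorname{id}$ then produces $Fg = f$.

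For essential surjectivity, given an $S$-comodule $(M, \rho)$, I would associate the endomorphism
\[
  e := \sigma_{GM} \circ G\rho : GM \to GM
\]
in $\mathcal{C}$. A short diagram chase using the coassociativity relation $FG\rho \circ \rho = F\eta_{GM} \circ \rho$, naturality of $\sigma$ applied to $G\rho$, and $\sigma \eta = \operatorname{id}$ shows $e^2 = e$. Since $\mathcal{C}$ is idempotent complete, $e$ splits to produce an object $N \in \mathcal{C}$ as a retract of $GM$, defining the candidate inverse $\Psi(M, \rho) := N$. The counit law $\epsilon_M \circ \rho = \operatorname{id}_M$ exhibits $M$ as a retract of $FGM$ via $\rho$ and $\epsilon_M$; one then checks that the two resulting retractions of $FGM$, the one defined by $Fe$ and the one defined by $\rho \circ \epsilon_M$, coincide, yielding a canonical isomorphism $FN \simeq M$ which respects the $S$-comodule structure. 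This produces $\Phi(N) \simeq (M, \rho)$.

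The main obstacle I anticipate is precisely this last identification of the two idempotents $Fe$ and $\rho \circ \epsilon_M$ on $FGM$, which is what forces the isomorphism $FN \simeq M$ to respect the comodule structure rather than to be a mere isomorphism of underlying objects. This is the only point where the counit $\epsilon$ of the adjunction interacts non-formally with the splitting $\sigma$; all remaining steps are either direct consequences of the splitting identity and the coassociativity/counit axioms for comodules, or are supplied by \cite{balmerSeparabilityTriangulatedCategories2011}.
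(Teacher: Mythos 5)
Your fully faithfulness argument is correct and is a genuinely more direct, Beck-style route than the paper's (which instead factors through the co-Kleisli category $\cofree{S}{\cD}$, its fully faithful functor $G_{\cf}$ into $\cC$, and a comparison of idempotent completions). The problem is the step you yourself flagged: the two idempotents $Fe$ and $\rho\circ\epsilon_M$ on $FGM$ do \emph{not} coincide in general, so the proposed route to the isomorphism $FN\simeq M$ fails as stated. Concretely, take $\cC=\Rep(H)$ for a finite group $H$ with $|H|$ invertible in $\mathbb{k}$, $\cD=\mathbb{k}$-vector spaces, $F$ the forgetful functor, $G=\Hom_{\mathbb{k}}(\mathbb{k}[H],-)$ with right-translation action, $\eta_M(m)=(h\mapsto hm)$ and $\sigma_X(\varphi)=\tfrac{1}{|H|}\sum_h h^{-1}\varphi(h)$. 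For an $H$-module $M$ regarded as the comodule $(FM,F\eta_M)$ one computes $Fe(f)=\bigl(x\mapsto \tfrac{1}{|H|}\sum_h xh^{-1}\cdot f(h)\bigr)$ while $\rho\circ\epsilon_M(f)=\bigl(x\mapsto x\cdot f(1)\bigr)$: these idempotents have the same image but different kernels (already for $H=\mathbb{Z}/2$, $M=\mathbb{k}$ trivial), hence are distinct. What is true, and what suffices, are the one-sided identities $Fe\circ(\rho\epsilon_M)=\rho\epsilon_M$ (formal, from coassociativity and $\sigma\eta=\operatorname{id}$) and $(\rho\epsilon_M)\circ Fe=Fe$; the latter is \emph{not} a consequence of the splitting identity alone but follows from naturality of $\sigma$ (equivalently, the bicomodule property of $s=F\sigma G$) together with the counit axiom $\epsilon S\circ\Delta=\operatorname{id}$. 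Granting these, set $\varphi:=\epsilon_M\circ Fi$ and $\psi:=Fr\circ\rho$ for a splitting $i\colon N\to GM$, $r\colon GM\to N$ of $e$; then $\varphi\psi=\operatorname{id}_M$, $\psi\varphi=\operatorname{id}_{FN}$, and $\varphi$ is a comodule map by the triangle identity $G\epsilon\circ\eta G=\operatorname{id}$ (one finds $\rho\varphi=Fi=S\varphi\circ F\eta_N$). So your construction is repairable, but the claimed coincidence of idempotents is the one false step. (Alternatively you can bypass the computation: once $\Phi$ is fully faithful, separability of $S$ --- Proposition \ref{sec:comon-triang-categ-6} --- makes every comodule $(M,\rho)$ a retract of the cofree comodule $\Phi(GM)$ inside $\comod{S}{\cD}$ via $\rho$, and the essential image of a fully faithful functor out of an idempotent-complete category is closed under retracts.)

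There is also a smaller gap in your deduction of part \ref{item:2}: transporting Balmer's pre-triangulation of $\comod{S}{\cD}$ along $\Phi$ equips $\cC$ with \emph{a} pre-triangulated structure whose distinguished triangles are those with distinguished $F$-image, but part \ref{item:2} asserts this class equals the \emph{given} one. One inclusion is exactness of $F$; the other still needs an argument, e.g.\ the standard fact that the class of distinguished triangles admits no proper enlargement (complete the first map to a cone in the given structure and compare via TR3 and the five lemma). The paper sidesteps both issues: it proves \ref{item:2} directly by noting that a triangle whose $F$-image is distinguished is, thanks to the split unit, a direct summand of the distinguished triangle $GFX\to GFY\to GFZ\to GFX[1]$, and proves \ref{item:1} by identifying $\cC$ and $\comod{S}{\cD}$ with the idempotent completion of the cofree category $\cofree{S}{\cD}$.
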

To prove this we will need the following
\begin{proposition}
  \label{sec:comon-triang-categ}
  Let $F:\mathcal{C}\rightleftarrows \mathcal{D}:G$ be a pair of exact functors, $F \dashv G$, between \emph{pre-triangulated} categories $\mathcal{C}, \mathcal{D}$. Assume that $F$ is a faithful functor. Then any object of $X \in \operatorname{\cC}$ is a direct summand of the object $GF X$.
\end{proposition}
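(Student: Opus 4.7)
The plan is to exhibit $X$ as a direct summand of $GFX$ by showing that the unit $\eta_X:X\to GFX$ is a split monomorphism. To get a handle on what goes wrong if it is not, complete $\eta_X$ to a distinguished triangle in $\cC$:
\[
X\xrightarrow{\eta_X} GFX \to C\to X[1].
\]
Applying the exact functor $F$ produces a distinguished triangle
\[
FX\xrightarrow{F\eta_X} FGFX \to FC\to FX[1]
\]
in $\cD$. By one of the triangle identities of the adjunction $F\dashv G$, the composition $\varepsilon_{FX}\circ F\eta_X$ equals $\operatorname{id}_{FX}$, where $\varepsilon$ is the counit; in particular $F\eta_X$ is a split monomorphism in $\cD$.

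Next I would invoke the standard fact that in a pre-triangulated category, a distinguished triangle whose first arrow is a split monomorphism is isomorphic to a trivially split triangle; in particular its connecting morphism is zero. Consequently the map $FC\to FX[1]$ vanishes. Now comes the only place where faithfulness is used: since $F$ is faithful it reflects zero morphisms, so the connecting morphism $C\to X[1]$ of the original triangle must itself be zero.

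Finally, once the connecting morphism of a distinguished triangle vanishes, the triangle splits, yielding an isomorphism $GFX\cong X\oplus C$ compatible with $\eta_X$ and hence displaying $X$ as a direct summand of $GFX$, as desired.

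The main pedagogical obstacle is keeping the roles of $F$ and $G$ straight: the proof uses the triangle identity to obtain splitting after $F$, and then faithfulness of $F$ to descend the splitting back to $\cC$. Everything else is formal manipulation of distinguished triangles, and no assumption beyond $F\dashv G$, exactness of both functors, and faithfulness of $F$ is needed.
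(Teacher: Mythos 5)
Your proof is correct and follows essentially the same route as the paper's: complete $\eta_X$ to a triangle, use the triangle identity $\theta_{FX}\circ F\eta_X=\operatorname{id}_{FX}$ to split $F\eta_X$ and kill the connecting morphism after applying $F$, then use faithfulness of $F$ to conclude the connecting morphism $\sigma$ vanishes already in $\cC$, so the triangle splits and $X$ is a summand of $GFX$. The only cosmetic difference is that you spell out the intermediate standard facts (split first arrow implies zero connecting map, zero connecting map implies split triangle) that the paper leaves implicit; also note that exactness of $G$ is never actually needed.
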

\begin{proof}
  (cf. \cite[Proposition 2.10]{balmerSeparabilityTriangulatedCategories2011}). Consider the completion of the unit map $\eta_X: X \to GFX$ to a distinguished triangle \[X \xrightarrow{\eta_X} GFX \to C \xrightarrow{\sigma} X[1].\] 
  By definition, the composition
  \[
    FX \xrightarrow{F\eta_X} FGF X \xrightarrow{\theta_{FX}} FX  
  \]
  is the identity, where $\theta:FG \to \Id_{\cD}$ is the counit transformation, so that the map $F\eta_X$ is split and $F(\sigma) = 0$. Since $F$ is faithful, $\sigma = 0$ and so $\eta_X$ is split.
\end{proof}

We will also use the following statements.
\begin{theorem}[\cite{balmerSeparabilityTriangulatedCategories2011}]
  \label{sec:comon-triang-categ-1}
Let $\cD$ be a pre-triangulated category and let $\cC$ be an idempotent-complete suspended category. Let \[F : \cC \rightleftarrows \cD : G\] be a pair of functors commuting with suspension, $F \dashv G$. Assume that the unit of $GF$ is naturally split, and that the comonad $S = FG$ on $\cD$ is exact. Then $\cC$ is pre-triangulated with
distinguished triangles $\Delta$ being exactly the ones such that $F(\Delta)$ is distinguished in $\cD$. Moreover,
with this pre-triangulation both functors $F$ and $G$ become exact.
\end{theorem}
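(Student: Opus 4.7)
The plan is to identify $\cC$ with the category $\comod{S}{\cD}$ of $S$-comodules and import a pre-triangulated structure from there. Two ingredients, one from each hypothesis, make this work: the separability of $S$ (granted by Proposition~\ref{sec:comon-triang-categ-6} from the split unit) produces enough retractions to describe comodules, while idempotent completeness of $\cC$ lets the relevant retractions split inside $\cC$ itself.

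First I would verify that $F$ is faithful and that every object $X \in \cC$ is naturally a direct summand of $GFX$ via $\eta_X$ and $\sigma_X$; the latter is Proposition~\ref{sec:comon-triang-categ}, and the former is immediate from naturality, since $F(f)=0$ forces $GF(f)\circ\eta_X = \eta_Y \circ f$, and hence $f = \sigma_Y\circ GF(f)\circ\eta_X = 0$. Next I would establish an equivalence $\Phi \colon \cC \rsim \comod{S}{\cD}$ sending $X$ to $(FX, F\eta_X)$. For fully faithfulness: a comodule morphism $\varphi\colon FX\to FY$ is the $F$-image of $\sigma_Y\circ G\varphi\circ \eta_X \colon X \to Y$, as a direct computation using the coaction axiom, the splitting $\sigma$, and the triangle identities shows. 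For essential surjectivity: any comodule $(M,\rho)$ is a retract in $\comod{S}{\cD}$ of the cofree comodule $(SM, \Delta_M)$, with the retraction built from the separator $s\colon SS \to S$; since $SM = FGM = \Phi(GM)$, the comodule $M$ is a retract of an object in the essential image of $\Phi$, and combining full faithfulness of $\Phi$ with idempotent completeness of $\cC$ this retract splits inside $\cC$, producing a preimage of $M$.

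Once the equivalence is in place, I would install a pre-triangulated structure on $\comod{S}{\cD}$ by declaring a triangle distinguished iff its underlying $\cD$-triangle is, and verify the TR axioms there. The crux is that given a morphism of $S$-comodules, its cone in $\cD$ acquires a canonical coaction: the coaction maps give a morphism of (two-term) diagrams whose cones extend to a morphism of distinguished triangles by TR3, and exactness of $S$ applied to this whole picture identifies the third column with $S$ applied to the original cone, so that the TR3 fill-in is the desired coaction. The coaction axioms, as well as the octahedral axiom for $\comod{S}{\cD}$, reduce to the axioms in $\cD$ by the same exactness argument. Transporting along $\Phi$ endows $\cC$ with a pre-triangulation in which a triangle is distinguished iff $F$ of it is distinguished in $\cD$. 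Exactness of $F$ is tautological; exactness of $G$ follows because for distinguished $\Delta$ in $\cD$ the triangle $F(G\Delta) = S(\Delta)$ is distinguished by exactness of $S$, whence $G\Delta$ is distinguished in $\cC$ by the criterion.

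The principal obstacle is the essential surjectivity of $\Phi$: without separability one cannot exhibit an arbitrary comodule as a retract of a cofree one, and without idempotent completeness of $\cC$ the resulting retract does not split in the right place. A subtler point is verifying that the candidate coaction on the cone, and all the octahedral fill-ins, are genuine comodule maps rather than merely $\cD$-morphisms; this is precisely where exactness of $S$ has to be used repeatedly, and where fill-in ambiguity has to be controlled using the separability retraction.
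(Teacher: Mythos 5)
The first half of your plan is sound: with the unit split, $F$ is faithful, the comparison functor $\Phi\colon X\mapsto (FX,F\eta_X)$ into $\comod{S}{\cD}$ is fully faithful, and separability (Proposition~\ref{sec:comon-triang-categ-6}) together with idempotent completeness of $\cC$ gives essential surjectivity; this part essentially reproduces the argument of Proposition~\ref{sec:co-monads-triang}\ref{item:1}. The genuine gap is the step you yourself call the crux: installing a pre-triangulation on $\comod{S}{\cD}$. A TR3 fill-in $\rho_C\colon C\to SC$ between the triangle on a comodule map and its image under $S$ is neither unique nor automatically a coaction. Exactness of $S$ only guarantees that the target triangle is distinguished; it does not force an arbitrary fill-in to satisfy counitality or coassociativity, nor does it make the fill-ins needed for TR3 (and functoriality of cones) in $\comod{S}{\cD}$ into comodule morphisms. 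For instance, composing $\rho_C$ with the counit $SC\to C$ gives an endomorphism of the triangle $M\to N\to C\to \Sigma M$ restricting to the identity on $M$ and $N$, hence differing from $\operatorname{id}_C$ only by a map factoring through $\Sigma M$ --- but it need not be the identity, and no choice of fill-in is singled out. Your closing sentence concedes that the ambiguity ``has to be controlled using the separability retraction,'' but that control is precisely the mathematical content of the theorem and no mechanism is given; as written the key step is asserted, not proved.

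Note also that your route inverts the logic of the result being quoted. The paper's proof is simply the citation of \cite{balmerSeparabilityTriangulatedCategories2011}: there the triangulation of the category of (co)modules (Corollary~\ref{sec:comon-triang-categ-2} here, Balmer's Corollary 4.3) is \emph{deduced from} Theorem 4.1 applied to the Eilenberg--Moore adjunction, whose unit is split by separability --- not the other way around. Balmer's proof of Theorem 4.1 works directly on $\cC$: candidate triangles are those with distinguished $F$-image, and the required fill-ins and cones are manufactured from the natural retraction $\sigma$ of the unit together with idempotent completeness of $\cC$ (for example, a fill-in $\tilde w$ in $\cD$ between $F$-images yields the fill-in $\sigma\circ G(\tilde w)\circ\eta$ in $\cC$, commutativity being checked after applying the faithful functor $F$), precisely so as to avoid having to endow cones in $\cD$ with coactions. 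So by first passing to $\comod{S}{\cD}$ you have shifted, not solved, the problem: to complete your argument you would either have to carry out an argument of Balmer's type on the $\cC$-side anyway, or genuinely show how the bicomodule section $s\colon SS\to S$ corrects a fill-in to an honest, functorial coaction on the cone; neither is supplied in the proposal.
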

\begin{proof} 
This is \cite[Theorem 4.1]{balmerSeparabilityTriangulatedCategories2011}, stated for comonads instead of monads.
\end{proof}
\begin{corollary}[\cite{balmerSeparabilityTriangulatedCategories2011}]
 \label{sec:comon-triang-categ-2} 
 Let $\cD$ be an idempotent-complete pre-triangulated category and let $S:\cD\to\cD$ be an exact separable comonad. Then the category $\comod{S}{\cD}$ is pre-triangulated, so that the functors $F_S, G_S$ are exact.  The pre-triangulation is characterized by exactness of either $F_S$ or $G_S$.   
\end{corollary}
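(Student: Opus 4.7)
The plan is to deduce this corollary as a direct application of Theorem \ref{sec:comon-triang-categ-1} to the canonical adjunction $F_S \dashv G_S$ between $\comod{S}{\cD}$ and $\cD$. With this choice the associated comonad $F_S G_S$ is literally the given $S$, so the hypotheses of the theorem on this comonad are supplied by the hypotheses of the corollary.

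Before invoking the theorem I would check that $\comod{S}{\cD}$ is an idempotent-complete suspended category, one of its standing inputs. Idempotent completeness transfers from $\cD$: an idempotent of an $S$-comodule $(X,\rho)$ is an idempotent in $\cD$, hence splits there, and the splitting inherits a unique coaction compatible with $\rho$, giving the splitting in $\comod{S}{\cD}$. Since $S$ is exact it commutes with the suspension of $\cD$ up to natural isomorphism, so the suspension lifts uniquely to $\comod{S}{\cD}$, and both $F_S$ and $G_S$ commute with it by construction.

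Next, using separability of $S$, Proposition \ref{sec:comon-triang-categ-6}(b) provides a natural splitting of the unit $\eta_S : \Id_{\comod{S}{\cD}} \to G_S F_S$. Combined with exactness of $S = F_S G_S$, all hypotheses of Theorem \ref{sec:comon-triang-categ-1} are in place. The theorem then endows $\comod{S}{\cD}$ with the desired pre-triangulated structure, whose distinguished triangles are precisely those sent by $F_S$ to distinguished triangles in $\cD$, and with respect to which both $F_S$ and $G_S$ are exact. The parallel characterization via exactness of $G_S$ is part of Balmer's result and is deduced by the symmetric argument, using that $\comod{S}{\cD}$ is also idempotent complete and that the adjunction is a well-behaved pair of exact functors.

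There is no substantial obstacle in this plan: the technical content is fully packaged into Theorem \ref{sec:comon-triang-categ-1} and Proposition \ref{sec:comon-triang-categ-6}(b). The only point requiring genuine care is the setup — the verification of idempotent completeness and of the lifted suspension on $\comod{S}{\cD}$ — both of which are routine consequences of the corresponding properties of $\cD$ together with exactness of $S$.
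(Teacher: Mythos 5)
Your proposal is correct and is precisely how the cited source (Balmer's Corollary 4.3) deduces the result from the theorem; the paper itself only gives the citation, so you have in effect unwound that citation. Two small points worth keeping in mind: the splitting of $\eta_S$ you invoke is exactly Proposition~\ref{sec:comon-triang-categ-6}(b) with $\cC = \comod{S}{\cD}$, $F = F_S$, $G = G_S$, so make sure the adjunction orientation is as in the paper ($F_S$ forgetful and left adjoint, $G_S$ cofree and right adjoint, $F_S G_S \cong S$); and the ``alternative characterization via exactness of $G_S$'' is not literally part of the statement of Theorem~\ref{sec:comon-triang-categ-1}, so it does require the extra (short) observation that every object of $\comod{S}{\cD}$ is a retract of an object in the image of $G_S$, whence every distinguished triangle is a retract of $G_S$ applied to a distinguished triangle in $\cD$, making the triangulation recoverable from exactness of $G_S$ alone.
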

\begin{proof} 
This is \cite[Corollary 4.3]{balmerSeparabilityTriangulatedCategories2011}, stated for comonads instead of monads.
\end{proof}
\begin{proof}[Proof of Proposition \ref{sec:co-monads-triang}]

  ``Only if'' statement of \ref{item:2} is just a consequence of $F$ being exact, so we only need to prove the other direction. Consider any triangle  \[X \to Y \to Z \to X[1]\]  such that the triangle \[FX \to FY \to FZ \to FX[1]\] is distinguished. Since the unit map $\eta$ is split, we conclude that $X \to Y \to Z \to X[1]$ is a direct summand of the distinguished triangle \[GFX \to GFY \to GFZ \to GFX[1],\] and hence is distinguished itself.

  We now prove \ref{item:1}. For (an arbitrary) comonad $S$ on a category $\mathcal{D}$, let $\cofree{S}{\cD}$ stand for its cofree category. Recall that this category has the same objects as $\mathcal{D}$, and
  \[
    \Hom_{\cofree{S}{\cD}}(A,B) := \Hom_{\cD}(SA,B),
  \]
  with the composition
   \[\Hom(B,C)_{\cofree{S}{\cD}}\times\Hom_{\cofree{S}{\cD}}(A,B) \to \Hom_{\cofree{S}{\cD}}(A,C)\]
    given by the composition of maps 
  \[
    (f,g) \mapsto SA \to SSA \xrightarrow{S(f)} SB \xrightarrow{g} C,
  \]
  with the first map coming from the co-multiplication.

  For $S = FG$, functor $G$ defines a fully faithful functor \[G_{\cf}:\cofree{S}{\cD} \to \cC.\] Since the unit transformation $\eta$ is split, $F$ is faithful, so by Proposition \ref{sec:comon-triang-categ}, the fully faithful functor
  \[
   G_{\cf}^{\natural}:(\cofree{S}{\cD})^{\natural} \to \cC^{\natural} \simeq \cC,
 \]
 where $(-)^{\natural}$ stands for the idempotent completion of a category, is essentially surjective, and so is an equivalence.

By Proposition \ref{sec:comon-triang-categ-6}, the comonad $S$ is separable, so by Corollary~\ref{sec:comon-triang-categ-2}, the category $\comod{S}{\cD}$ is pre-triangulated and functors $F_S,G_S$ are exact. The functor $F_S$ is always faithful, and the category $\comod{S}{\cD}$ is idempotent-complete, bacause $\cD$ is such, so by the same argument as above we have an equivalence \[(\cofree{S}{\cD})^{\natural} \simeq \comod{S}{\cD},\]
induced by the functor $G_{S,\cf}:\cofree{S}{\cD}\to\comod{S}{\cD}.$

Let $\bar{F}:\cC\to\comod{S}{\cD}$ be the canonical functor induced by $F$. One has $\bar{F}\circ G_{\cf} \simeq G_{S,\cf}$, with $G_{\cf}^{\natural},G_{S,\cf}^{\natural}$ being equivalences. It follows that $\bar{F}^{\natural}\simeq \bar{F}$ is an equivalence as well.

\end{proof}
\begin{bremark}
  Note that the category $S-\operatorname{comod}_{\mathcal{D}}$ has no reason to be (pre-)triangulated for an arbitrary triangulated comonad $S$ on a (pre-)triangulated category $\mathcal{D}$. It follows that, in the assumption of the Proposition \ref{sec:co-monads-triang}, we can equip $S-\operatorname{comod}_{\mathcal{D}}$ with a triangulated structure, saying that the triangle in the latter category is distinguished if its image under the forgetful functor to $\mathcal{D}$ is distinguished. 
\end{bremark}
\begin{bremark}
  The proof of Proposition \ref{sec:co-monads-triang} mostly follows \cite{balmerSeparabilityTriangulatedCategories2011} (see also \cite{dellambrogioNoteTriangulatedMonads2018}). More specifically, it is proved in \cite{balmerSeparabilityTriangulatedCategories2011} that for any stably-separable exact (co-)monad $M$ on a pre-triangulated (respectively, an $N$-triangulated, $N \geq 2$) category, the category of \mbox{(co-)}modules over it is pre-triangulated (respectively, $N$-triangulated) so that an \mbox{($N$-)}traingle is distinguished in $M-\operatorname{(co-)mod}_{\mathcal{D}}$ if and only if it is distinguished after the application of the forgetful functor to $\mathcal{D}$. In \cite{dellambrogioNoteTriangulatedMonads2018} it is proved that, for a monad $M$ on $\mathcal{C}$, if the forgetful functor from a triangulated category $M-\operatorname{mod}_{\mathcal{C}}$ to $\mathcal{C}$ is exact, then, for any triangulated realization $F:\mathcal{C}\rightleftarrows \mathcal{D}:G$ of a monad $M$ with $\mathcal{D}$ idempotent-complete and such that $G$ is conservative, we have a triangulated equivalence $\mathcal{D}\simeq M-\operatorname{mod}_{\mathcal{C}}$. We refer to the citation above for the definitions of the terms used in this remark. 
\end{bremark}
\subsection{Springer comonad.} We now apply the results of Subsection \ref{sec:comon-triang-categ-8} to our geometric setup. 

Consider the diagram
\[
\begin{tikzcd} & G\times G/B \arrow[ld, "p"'] \arrow[rd, "q"] & \\ G &
& \yy,
\end{tikzcd}
\]
where $p$ is the projection and $q$ is the quotient of the map \[q':G \times G/U \to G/U \times G/U, q'(g,xU) = (xU,gxU)\] by the free right $T$-action (with respect to which it is easy to see that $q'$ is equivariant). Consider the adjoint action of $G$ on itself and left diagonal action of $G$ on the other corners of the diagram above. This makes the diagram $G$-equivariant. The functor
\[
  \hc=q_!p^*:D^b(G/_{\Ad}G) \to D^b(G\backslash\yy)
\]
is called the Harish-Chandra transform. It is well-known (see, for example, \cite{ginzburgAdmissibleModulesSymmetric1989}) to be monoidal with respect to the convolutions $\star$ and $\star_1$.
It has a right adjoint functor
\[
  \chi = p_*q^!.
\]
We have a ``Springer comonad'' $\cS = \hc\circ\chi$ on $\Hecke{1}$. The terminology is justified by the following discussion.

Write $x \mapsto {}^gx := gxg^{-1}$ and define
\[
  \tilde{\mathcal{N}} := \{(x, gB)\in G\times G/B: x \in {}^gU\}.
\]
Let $\pi: \tilde{\mathcal{N}} \to G$ be the natural projection, and let \[\Spgr = \pi_*\cc_{\tilde{\mathcal{N}}}[2 \dim U]\] 
be the Springer sheaf.
We will use the following well-known
\begin{proposition}
\label{sec:springer-comonad-2}
The sheaf $\delta_e:=\iota_{e*}\cc_{\operatorname{pt}}$, where \mbox{$\iota_e:\operatorname{pt}\to G$} is the unit map of $G$, is a direct summand of $\Spgr$. More precisely, there is a $W$-action on $\Spgr$, such that $\Spgr^W\simeq\delta_e$. \end{proposition}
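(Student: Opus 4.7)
The plan is to construct the $W$-action via the group Grothendieck--Springer simultaneous resolution. Set $\tilde{G} := \{(x, gB) \in G \times G/B : x \in {}^g B\}$ with projection $\tilde{\pi}: \tilde{G} \to G$. Over the regular semisimple locus $G^{\mathrm{rs}}$, $\tilde{\pi}$ is an \'etale Galois cover with group $W$, acting via $w \cdot (x, gB) = (x, g\dot{w}B)$ for any lift $\dot{w}$ of $w$; this gives a canonical $W$-action on $\tilde{\pi}_* \cc_{\tilde{G}}|_{G^{\mathrm{rs}}}$. Since $\tilde{\pi}$ is small, $\tilde{\pi}_* \cc_{\tilde{G}}[\dim G]$ is the middle perverse extension of the associated rank-$|W|$ local system on $G^{\mathrm{rs}}$, and the $W$-action extends uniquely to all of $G$. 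Since $\tilde{\mathcal{N}}$ is the preimage of the unipotent variety $\mathcal{U} \subset G$ under $\tilde{\pi}$, restricting to $\mathcal{U}$ yields the desired $W$-action on $\Spgr$.

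To identify $\Spgr^W$, first compute the stalk at the identity. The fiber $\pi^{-1}(e)$ is the whole flag variety $G/B$ (since $e \in {}^g U$ for every $g$), so $\Spgr|_e = H^*(G/B, \cc)[2\dim U]$. Classically, the Springer $W$-action on $H^*(G/B)$ realises the regular representation $\mathbb{k}[W]$, so $(\Spgr|_e)^W$ is one-dimensional; with the right normalization, this line sits in top cohomological degree $2\dim U$, hence in degree $0$ after the shift --- matching the stalk of $\delta_e$. To then show $\Spgr^W$ is supported at $\{e\}$, invoke the classical Springer correspondence decomposing the perverse sheaf $\Spgr$ as $\bigoplus_{(\mathcal{O}, \mathcal{L})} V_{(\mathcal{O}, \mathcal{L})}^{\vee} \otimes \IC(\overline{\mathcal{O}}, \mathcal{L})$, under which the pair $(\{e\}, \cc)$ is matched with the trivial (up to sign twist) $W$-representation; taking $W$-invariants then leaves exactly $\delta_e = \IC(\{e\}, \cc)$.

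The main subtlety is the ubiquitous sign convention in Springer theory: in Lusztig's normalization of the Grothendieck--Springer $W$-action as described above, $\delta_e$ is actually the sign isotypic component of $\Spgr$, not the trivial one. Obtaining the statement as phrased therefore requires twisting the $W$-action by the sign character --- a harmless modification that can be absorbed into the definition of the action. A secondary point is that the argument must run uniformly across the sheaf-theoretic settings \ref{set:c}--\ref{set:mixed}; this is standard, and in the modular setting \ref{set:p} the assumption $\ell \nmid |W|$ ensures the semisimplicity of $\mathbb{k}[W]$-modules and the applicability of Juteau's modular Springer theory.
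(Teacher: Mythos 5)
Your argument in the settings \ref{set:c}, \ref{set:dmod}, \ref{set:adic}, \ref{set:mixed} is correct and is essentially the classical Borho--MacPherson argument that the paper simply cites (the paper's own ``proof'' is a one-line reference): restrict the $W$-action on the small map $\tilde{\pi}_*\cc_{\tilde G}$ to the unipotent variety, compute the stalk at $e$ via the coinvariant algebra, and use the decomposition theorem plus the Springer correspondence to see that the relevant isotypic summand is $\delta_e$; the sign twist you point out is exactly the usual discrepancy between the restriction and Fourier-transform normalizations and is harmless for the statement as phrased. One small point: the formula $w\cdot(x,gB)=(x,g\dot w B)$ is not well defined as written, since $gB$ only determines $g$ up to right multiplication by $B$ and $B\dot wB\neq\dot wB$; over $G^{\mathrm{rs}}$ one must first normalize $g$ so that ${}^gT$ is the unique maximal torus containing $x$ (equivalently, describe the action by deck transformations of the $W$-cover). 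This is standard but should be said.

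The genuine gap is in setting \ref{set:p}. There you invoke ``the classical Springer correspondence decomposing $\Spgr$ as $\bigoplus V\otimes\IC$'', but the decomposition theorem fails for coefficients of characteristic $\ell$, and the modular Springer sheaf need not be a direct sum of $\IC$'s; Juteau's modular Springer correspondence does not provide such a decomposition. The assumption $\ell\nmid|W|$ gives semisimplicity of $\mathbb{k}[W]\simeq\operatorname{End}(\Spgr)$ and hence only an isotypic decomposition $\Spgr\simeq\bigoplus_E E\otimes A_E$; identifying the sign-isotypic piece $A_{\operatorname{sgn}}$ with $\delta_e$ still requires an argument. Either one proves that the sign-isotypic part of $H^*(\mathcal{B}_x;\mathbb{k})$ vanishes for unipotent $x\neq e$ --- which is not an automatic consequence of the characteristic-zero statement, since Springer fibers may have $\ell$-torsion and so the $W$-module $H^*(\mathcal{B}_x;\mathbb{k})$ need not be the reduction of the rational one --- or one uses the Fourier-transform construction of the action, as in the reference the paper cites for this setting (Achar--Henderson--Juteau--Riche): there $(\mu_*\cc_{\tilde{\mathfrak g}}[\dim\mathfrak g])^W\simeq\cc_{\mathfrak g}[\dim\mathfrak g]$, whose Fourier transform is the skyscraper, and one then compares with the restriction action (they differ by the sign character) and transports to the group via a Springer isomorphism, available over $\mathbb{C}$. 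As written, your appeal to ``Juteau's modular Springer theory'' names the right toolbox but the mechanism you actually use (a semisimple $\IC$-decomposition) is not available, so the modular case is not yet proved.
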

\begin{proof}
 In all settings except \ref{set:p} this is classical, see \cite{borhoRepresentationsGroupesWeyl1981}. In the setting \ref{set:p}, this can be deduced, for example, from \cite{acharWeylGroupActions2014}.
\end{proof}
We now recall the following results, proved in \cite{ginzburgAdmissibleModulesSymmetric1989} in the setting \ref{set:dmod}, with proofs transported verbatim to other settings.

\begin{lemma}\cite[Lemma 8.5.4]{ginzburgAdmissibleModulesSymmetric1989}
  \label{sec:springer-comonad-4}
There is a natural isomorphism  
\[\chi\circ\hc(-) \simeq \Spgr\star(-).\] 
\end{lemma}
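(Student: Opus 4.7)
The strategy is to express both $\chi\circ\hc(\mathcal{F})$ and $\Spgr\star\mathcal{F}$ as $!$-pushforwards of $*$-pullbacks along suitable correspondences, and then exhibit an isomorphism between these correspondences.

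First I would unpack $\chi\circ\hc(\mathcal{F})=p_*q^!q_!p^*\mathcal{F}$ via base change. The map $q:G\times G/B\to\yy$, which descends from $(g,xU)\mapsto(xU,gxU)$, is a smooth surjection of relative dimension $\dim U$ (for fixed $(y_1,y_2)\in Y^2$ the fibre over its class in $\yy$ is an affine space of this dimension), so $q^!\simeq q^*[2\dim U]$, absorbing a Tate twist. Base change applied to the Cartesian square of $q$ with itself yields $q^*q_!\simeq \mathrm{pr}_{2!}\mathrm{pr}_1^*$, where $\mathrm{pr}_i$ are the two projections from $Z:=(G\times G/B)\times_{\yy}(G\times G/B)$. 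A direct computation of the fibre product identifies
\[
Z\simeq\{(g_1,g_2,xB)\in G\times G\times G/B:g_1g_2^{-1}\in{}^xU\},
\]
with $\mathrm{pr}_i(g_1,g_2,xB)=(g_i,xB)$. Since $p$ is proper (fibre $G/B$), $p_!=p_*$, and we obtain
\[
\chi\circ\hc(\mathcal{F})\simeq\phi_!\psi^*\mathcal{F}[2\dim U],
\]
where $\phi(g_1,g_2,xB)=g_2$ and $\psi(g_1,g_2,xB)=g_1$.

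Next I would perform the analogous computation for $\Spgr\star\mathcal{F}=m_!(p_1^*\Spgr\otimes p_2^*\mathcal{F})$. Proper base change moves $\pi_*$ past $p_1^*$ (with $\pi:\tilde{\mathcal{N}}\to G$ proper), the projection formula then pulls the resulting constant sheaf out of the tensor product, and composing with $m_!$ yields
\[
\Spgr\star\mathcal{F}\simeq\mu_!\mathrm{pr}_3^*\mathcal{F}[2\dim U],
\]
where $\mu,\mathrm{pr}_3:\tilde{\mathcal{N}}\times G\to G$ are defined by $\mu(x,gB,h)=xh$ and $\mathrm{pr}_3(x,gB,h)=h$.

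The final step is to observe that the assignment $(g_1,g_2,xB)\mapsto(g_1g_2^{-1},xB,g_2)$ is an isomorphism $Z\xrightarrow{\sim}\tilde{\mathcal{N}}\times G$, with inverse $(x,gB,h)\mapsto(xh,h,gB)$: the defining condition $g_1g_2^{-1}\in{}^xU$ on $Z$ is literally the Springer condition defining $\tilde{\mathcal{N}}$. Under this bijection $(\psi,\phi)$ matches $(\mu,\mathrm{pr}_3)$, so the two correspondences yield isomorphic functors $D^b(G/_{\Ad}G)\to D^b(G/_{\Ad}G)$, proving the lemma. Naturality in $\mathcal{F}$ is automatic from the naturality of base change and the projection formula. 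The main thing to watch is compatibility with the $G$-equivariance structures (adjoint on $G$, diagonal on intermediate spaces) throughout the chain of base-changes, together with the bookkeeping of Tate twists in the mixed settings \ref{set:adic} and \ref{set:mixed}; both are routine but must be checked.
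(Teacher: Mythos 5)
Your overall strategy---rewriting $\chi\circ\hc$ and $\Spgr\star(-)$ as $!$-pushforward/$*$-pullback along explicit correspondences and matching the correspondences---is precisely the ``base change and diagram chase'' that the paper leaves to the reader, and the reductions on both sides (smoothness of $q$ of relative dimension $\dim U$, properness of $p$ and $\pi$, projection formula, shift/twist bookkeeping) are fine. The error is in the key identification. Computing the fibre product $Z=(G\times G/B)\times_{\yy}(G\times G/B)$: two points $(g_1,x_1B),(g_2,x_2B)$ have the same image under $q$ iff $x_1B=x_2B=:xB$ and $g_1^{-1}g_2\in{}^{x}U$, \emph{not} $g_1g_2^{-1}\in{}^{x}U$ as you wrote (the element $g_1g_2^{-1}$ is unipotent, but it lies in ${}^{g_1x}U={}^{g_2x}U$, not in ${}^{x}U$). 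Consequently your map $(g_1,g_2,xB)\mapsto(g_1g_2^{-1},xB,g_2)$ is not even defined on the actual $Z$; moreover, even on the locus you wrote down it pairs the legs the wrong way: it sends the pullback leg $\psi=g_1$ to $\mu$ and the pushforward leg $\phi=g_2$ to $\mathrm{pr}_3$, whereas to conclude $\phi_!\psi^*\simeq\mu_!\mathrm{pr}_3^*$ you need $\psi\leftrightarrow\mathrm{pr}_3$ and $\phi\leftrightarrow\mu$. With your pairing one would only get $\phi_!\psi^*\simeq\mathrm{pr}_{3!}\mu^*$, which computes $(\mathrm{inv}^*\Spgr)\star(-)$ for $\mathrm{inv}$ the inversion of $G$; identifying this with $\Spgr\star(-)$ requires the additional observation that $(v,yB)\mapsto(v^{-1},yB)$ is an automorphism of $\tilde{\mathcal{N}}$ over $\mathrm{inv}$, which you do not invoke.

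The repair is small: with the correct $Z=\{(g_1,g_2,xB):g_1^{-1}g_2\in{}^{x}U\}$ (pullback along $\psi=g_1$, pushforward along $\phi=g_2$), use the isomorphism $Z\rsim\tilde{\mathcal{N}}\times G$ given by $(g_1,g_2,xB)\mapsto(g_2g_1^{-1},\,g_1xB,\,g_1)$, with inverse $(v,yB,h)\mapsto(h,\,vh,\,h^{-1}yB)$. It is well defined because $g_1^{-1}g_2\in{}^{x}U$ is equivalent to $g_2g_1^{-1}\in{}^{g_1x}U$; it is equivariant for the adjoint action on the group coordinates and translation on the flag coordinate; and it satisfies $\mathrm{pr}_3=\psi$ and $\mu=\phi$, so the two correspondences coincide and the natural isomorphism $\chi\circ\hc(-)\simeq\Spgr\star(-)$ follows, with equivariant structure and naturality exactly as you indicate.
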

\begin{proof}
Follows from base change isomorphism and diagram chase.
\end{proof}
\begin{proposition}
  \label{sec:springer-comonad-1}
(\cite[Theorem 8.5.1]{ginzburgAdmissibleModulesSymmetric1989})  The unit transformation \mbox{$\eta:\Id_{D^b(G/_{\Ad}G)}\to\chi\circ\hc$} for the comonad $\cS$ is naturally split.
\end{proposition}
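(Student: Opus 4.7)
The plan is to bootstrap the splitting out of the $W$-action on the Springer sheaf, using Lemma \ref{sec:springer-comonad-4} to translate the question about the unit of the Springer comonad into a question about convolution with $\Spgr$.

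First, by Lemma \ref{sec:springer-comonad-4} we have a natural isomorphism $\chi\circ\hc(\mathcal{F}) \simeq \Spgr \star \mathcal{F}$ for $\mathcal{F} \in D^b(G/_{\Ad}G)$. Under this identification, I would verify by a base-change/adjunction diagram chase that the unit transformation $\eta_\mathcal{F}: \mathcal{F} \to \chi\circ\hc(\mathcal{F})$ is identified with the map
\[
\delta_e \star \mathcal{F} \xrightarrow{\iota \star \mathrm{id}_\mathcal{F}} \Spgr \star \mathcal{F},
\]
where $\iota: \delta_e \to \Spgr$ is the morphism corresponding to the inclusion of $W$-invariants $\Spgr^W \simeq \delta_e$ produced by Proposition \ref{sec:springer-comonad-2} (and where we use $\delta_e \star \mathcal{F} \simeq \mathcal{F}$ since $\delta_e$ is the unit of $\star$).

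Next, since the characteristic of $\mathbb{k}$ does not divide $|W|$ (either automatically, or by our standing assumption in setting \ref{set:p}), the $W$-action from Proposition \ref{sec:springer-comonad-2} lets us form the normalized averaging idempotent
\[
\pi := \frac{1}{|W|}\sum_{w \in W} w : \Spgr \to \Spgr,
\]
whose image realizes $\Spgr^W \simeq \delta_e$. This gives a projection $\sigma: \Spgr \to \delta_e$ such that $\sigma \circ \iota = \mathrm{id}_{\delta_e}$. Convolving with $\mathcal{F}$ then produces a natural transformation $\sigma \star \mathrm{id}_\mathcal{F}: \chi\circ\hc(\mathcal{F}) \to \mathcal{F}$ which, by the previous step, provides the desired splitting of $\eta_\mathcal{F}$, natural in $\mathcal{F}$.

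The one step that genuinely requires care is the identification of $\eta$ with $\iota \star \mathrm{id}$: one needs to trace through the base change isomorphisms defining $\chi\circ\hc \simeq \Spgr \star (-)$ and compare them with the adjunction unit for $(q_!p^*, p_*q^!)$ restricted to $\mathcal{F}$, keeping track of the normalization by $[2\dim U]$ in the definition of $\Spgr$. I would do this by factoring $\eta$ through proper base change along the Cartesian square expressing $\tilde{\mathcal{N}}$ as the fiber product realizing $\chi \circ \hc$, and checking on $\mathcal{F} = \delta_e$ where everything is explicit. Once this compatibility is in place, the splitting constructed above is manifestly natural in $\mathcal{F}$, completing the proof.
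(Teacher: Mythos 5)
Your proposal is correct and follows essentially the same route as the paper's (very terse) proof: identify $\chi\circ\hc$ with $\Spgr\star(-)$ via Lemma \ref{sec:springer-comonad-4}, use that $\delta_e$ is the monoidal unit, and split off $\delta_e\simeq\Spgr^W$ from $\Spgr$ via Proposition \ref{sec:springer-comonad-2}. The only difference is that you spell out the identification of $\eta$ with $\iota\star\mathrm{id}$ (which the paper leaves implicit, citing Ginzburg); note this step can also be shortened by observing that $\operatorname{Hom}(\delta_e,\Spgr)$ is one-dimensional and $\eta_{\delta_e}\neq 0$ by the triangle identity, so $\eta_{\delta_e}$ is automatically a nonzero multiple of the split inclusion.
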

\begin{proof}
  Apply Lemma \ref{sec:springer-comonad-4} and note that $\delta_e$ is the monoidal unit in $D^b(G/_{\Ad}G)$. By Proposition \ref{sec:springer-comonad-2} we get the result. 
\end{proof}
Since $D^b(G/_{\Ad}G)$ and $\Hecke{1}$ are idempotent complete, we now have the following consequence of Propositions \ref{sec:springer-comonad-1} and \ref{sec:co-monads-triang}.

\begin{corollary}
  \label{sec:springer-comonad}
 The Harish-Chandra functor $\hc$ induces an equivalence of categories \[\mathfrak{F}:D^b(G/_{\Ad}G)\simeq\comod{\cS}{\Hecke{1}}.\] 
\end{corollary}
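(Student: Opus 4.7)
The plan is to simply apply Proposition \ref{sec:co-monads-triang}\ref{item:1} to the adjoint pair $(\hc, \chi)$, since every hypothesis has been set up by the preceding discussion. First, I would verify the formal hypotheses. The functors $\hc = q_!p^*$ and $\chi = p_*q^!$ are compositions of standard operations between bounded (constructible, or holonomic $D$-module, or mixed) derived categories, hence are exact functors of triangulated categories, and the adjunction $\hc \dashv \chi$ is the composite of the adjunctions $p^* \dashv p_*$ and $q_! \dashv q^!$. Both $D^b(G/_{\Ad}G)$ and $\Hecke{1} = D^b(G\backslash\yy)$ are idempotent complete, as noted in the statement just above the corollary.

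The crucial input, the natural splitting of the unit $\eta: \Id_{D^b(G/_{\Ad}G)} \to \chi\circ\hc$, is provided by Proposition \ref{sec:springer-comonad-1}, which in turn is deduced from Lemma \ref{sec:springer-comonad-4} (identifying $\chi\hc$ with convolution by $\Spgr$) and the presence of the unit $\delta_e$ as a direct summand of $\Spgr$ via the Weyl group action (Proposition \ref{sec:springer-comonad-2}). With this splitting in hand, the comonad $\cS = \hc\circ\chi$ is separable by Proposition \ref{sec:comon-triang-categ-6}\ref{item:1}, which is one of the inputs that Proposition \ref{sec:co-monads-triang} uses.

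Granted all of this, Proposition \ref{sec:co-monads-triang}\ref{item:1} directly yields that the canonical functor $\mathfrak{F}:D^b(G/_{\Ad}G)\to\comod{\cS}{\Hecke{1}}$ induced by $\hc$ is an equivalence, which is the desired statement. There is no real obstacle to overcome here: the corollary is essentially a bookkeeping step assembling the abstract categorical machinery of Subsection \ref{sec:comon-triang-categ-8} with the geometric input of Proposition \ref{sec:springer-comonad-1}. The triangulated structure on $\comod{\cS}{\Hecke{1}}$ is characterized, via Corollary \ref{sec:comon-triang-categ-2}, by exactness of the forgetful functor $F_\cS$, so along the way one also gets that $\mathfrak{F}$ is an exact equivalence.
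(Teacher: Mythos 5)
Your proposal is correct and follows exactly the paper's route: the paper derives the corollary by combining Proposition \ref{sec:springer-comonad-1} (splitting of the unit) with Proposition \ref{sec:co-monads-triang}, after noting idempotent completeness of both categories, which is precisely the bookkeeping you carry out.
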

\subsection{Springer comonad and bimodule structure.}
We record the following simple observation. 

\begin{proposition}
  \label{sec:comon-module-categ-1}
Let $\cA$ be a monoidal category acting on a category $\cM$, and $\bar{S}$ be a coalgebra object in $\cA$. Action of $\bar{S}$ defines a comonad on $\cM$, denoted by $S$. Any functor $F \in \operatorname{Fun}_{\cA}(\cM,\cM)$ sends $S$-comodules to $S$-comodules.
\end{proposition}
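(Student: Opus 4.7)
The plan is direct: I will equip $F(M)$, for each $S$-comodule $(M,\rho)$, with a natural coaction built from $\rho$ and the module-functor constraints of $F$, then verify the coaction axioms by a diagram chase. Denote the action of $\cA$ on $\cM$ by $\bowtie$, and let $\alpha_{A,M}: F(A\bowtie M) \xrightarrow{\sim} A\bowtie F(M)$ denote the module-functor structure isomorphisms of $F$; by definition these are natural in $A$ and $M$ and satisfy the hexagon compatibility with the tensor product of $\cA$ and the unit axiom. The comonad is $S(-) = \bar{S} \bowtie -$, with comultiplication $S \to SS$ induced by $\Delta: \bar{S} \to \bar{S} \otimes \bar{S}$ and counit induced by $\varepsilon: \bar{S} \to \mathbf{1}_{\cA}$, so an $S$-comodule is a pair $(M, \rho)$ with $\rho: M \to \bar{S} \bowtie M$ satisfying the usual coassociativity and counit axioms.

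Given such $(M, \rho)$, I would define
\[
  \rho_F := \alpha_{\bar{S}, M} \circ F(\rho): F(M) \to \bar{S} \bowtie F(M)
\]
and check that $(F(M), \rho_F)$ is an $S$-comodule. The coassociativity of $\rho_F$ reduces to a diagram whose faces are: (i) $F$ applied to the coassociativity square for $\rho$; (ii) the hexagon for $\alpha$, which identifies $\alpha_{\bar{S}\otimes\bar{S}, M}$ with $(\bar{S}\bowtie \alpha_{\bar{S}, M})\circ \alpha_{\bar{S}, \bar{S}\bowtie M}$; and (iii) the naturality of $\alpha_{\bar{S}, -}$ applied to $\rho$. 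The counit axiom is a shorter chase, combining the unit compatibility of $\alpha$ with the counit axiom of $\rho$.

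Functoriality — that a morphism $f: (M, \rho_M) \to (N, \rho_N)$ of $S$-comodules is carried by $F$ to a morphism $F(f): (F(M), \rho_{F,M}) \to (F(N), \rho_{F,N})$ of $S$-comodules — is immediate from the naturality of $\alpha$ in its second variable. The main ``obstacle'' here is purely bookkeeping with the module-functor coherences; no conceptual difficulty arises, since the content of the proposition is just that a module functor over $\cA$ commutes with tensoring by any fixed object of $\cA$ up to coherent isomorphism, and this transparently passes from objects to coalgebra objects and hence from the $\bar{S}$-action to comodules over the induced comonad $S$.
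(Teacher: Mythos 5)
Your construction and verification are correct; the paper records this proposition as a simple observation without proof, and your argument (equip $F(M)$ with the coaction given by the module-functor constraint composed with $F(\rho)$, then check coassociativity and counit by the coherence hexagon and naturality) is precisely the intended routine chase. The only point worth making explicit is that the chase also uses naturality of the constraint in the $\cA$-variable, applied to $\Delta$ (and to $\varepsilon$ for the counit axiom); your appeal to naturality in both variables covers this even though it is not listed among your three faces.
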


We now show that the Springer comonad fits into the setting of Proposition \ref{sec:comon-module-categ-1}.
\begin{proposition}
  \label{sec:spring-comon-bimod}
 There is a coalgebra object $\bar{\cS} \in \Hecke{2}$ such that there is an isomorphism of comonads
  \begin{equation}
    \label{eq:3}
    \bar{\cS}\bowtie(-)\simeq\cS(-).
  \end{equation}
\end{proposition}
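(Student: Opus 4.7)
The statement has two parts: (i) an object $\bar\cS\in\Hecke{2}$ with $\bar\cS\bowtie(-)\simeq\cS$ as functors; (ii) a coalgebra structure on $\bar\cS$ whose induced comonad structure matches that of $\cS$. My plan is to build $\bar\cS$ as a $!$-pushforward from a ``two-sided Grothendieck--Springer'' variety, verify (i) by base change and the projection formula, and then construct the coalgebra structure via geometric correspondences.

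\textbf{Construction and functor isomorphism.} Let $\widetilde{\mathcal{N}}^{(2)} := G\times(G/B)^2$, equipped with the $G^2$-action $(g,h)\cdot(x, g_1B, g_2B) := (hxg^{-1}, gg_1B, gg_2B)$, which specializes to the adjoint action on the diagonal $G\hookrightarrow G^2$. The map $\psi:\widetilde{\mathcal{N}}^{(2)}\to\yy^{(2)}$ sending $(x, g_1B, g_2B)$ to the $T^2$-orbit of $(g_1U, g_2U, xg_2U, xg_1U)\in Y^4$ is $G^2$-equivariant (direct check). Set $\bar\cS := \psi_!\,\cc_{\widetilde{\mathcal{N}}^{(2)}}[2\dim U]\in\Hecke{2}$. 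Using the Cartesian square $\widetilde{\mathcal{N}}^{(2)} = (G\times G/B)\times_G(G\times G/B)$ (with projections $\pi_1, \pi_2$) and the properness of $p$, base change yields
\[
\cS(\cA) = q_!\,p^*p_*q^!\cA \simeq (q\pi_1)_!\,\pi_2^*q^!\cA.
\]
Since $q$ is a smooth $U$-bundle (its fibre over $(y_1,y_2)\in Y^2$ is $\{g\in G:gy_1=y_2\}\cong U$), we have $q^!\simeq q^*[2\dim U]$, so $\cS(\cA)\simeq\phi_{1!}\phi_2^*\cA[2\dim U]$ with $\phi_i := q\pi_i$. On the other hand, the projection formula applied to the defining formula for $\bowtie$ gives $\bar\cS\bowtie\cA\simeq(p_{14}\psi)_!(p_{23}\psi)^*\cA[2\dim U]$, and the identifications $p_{14}\psi=\phi_1$ and $p_{23}\psi=\phi_2$ are immediate from the definitions of $p_{14}, p_{23}$ and the construction of $\psi$. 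This establishes the functor isomorphism.

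\textbf{Coalgebra structure and main obstacle.} The comonad $\cS$ carries the counit $\hc\chi\to\Id$ and the comultiplication $\hc\eta\chi:\hc\chi\to\hc\chi\hc\chi$ coming from the adjunction $\hc\dashv\chi$. I would build structure maps $\bar\cS\to\Hu_{\Hecke{2}}$ and $\bar\cS\to\bar\cS\star_2\bar\cS$ geometrically. For the counit, I would use the closed embedding of the diagonal locus $\{g_1B = g_2B\}\subset\widetilde{\mathcal{N}}^{(2)}$, which maps to the support of the unit in $\Hecke{2}$, combined with the Springer splitting $\Spgr\to\delta_e$ provided by Proposition~\ref{sec:springer-comonad-2}. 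For the comultiplication, I would unwind $\bar\cS\star_2\bar\cS$ by the projection formula as a $!$-pushforward from a double fibre product $\widetilde{\mathcal{N}}^{(2)}\times_?\widetilde{\mathcal{N}}^{(2)}$ and exhibit a natural diagonal-type map from $\widetilde{\mathcal{N}}^{(2)}$ into it, inducing the required morphism.

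\textbf{Main obstacle.} The delicate step is verifying that these geometric coalgebra maps agree, under the functor identification from the previous paragraph, with the adjunction-induced counit and comultiplication of $\cS$. This reduces to a diagram chase tracking the unit $\eta:\Id\to\chi\hc$ (and the Springer splitting) through the base-change and projection-formula isomorphisms used above. An alternative abstract route is to show that the functor $\Hecke{2}\to\operatorname{Fun}(\Hecke{1},\Hecke{1})$, $\bar\cB\mapsto\bar\cB\bowtie(-)$, is faithful enough (e.g.\ by evaluating at $\Hu$) to transfer the comonad structure of $\cS$ back to a coalgebra structure on $\bar\cS$; this bypasses the geometric construction of the comultiplication but requires separately checking that $\cS$ is an $\Hecke{2}$-module endofunctor.
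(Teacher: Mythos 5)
Your construction of $\bar\cS$ agrees exactly with the paper's (your $\psi$ is the paper's $\alpha$ up to relabeling variables), and your verification of the functor isomorphism $\bar\cS\bowtie(-)\simeq\cS(-)$ via the Cartesian square, base change, and $q^!\simeq q^*[2\dim U]$ is correct and slightly more detailed than the one-line remark in the paper. That half of the statement you have.

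The coalgebra structure, however, is not actually established in your proposal — you outline two strategies but carry out neither, and you yourself flag the compatibility check as the ``main obstacle.'' The key idea you are missing is that one can build the comonad structure on $\bar\cS\star_2(-)$ directly by an adjunction argument \emph{inside} $\Hecke{2}$, bypassing both the explicit geometric construction of the comultiplication and the ``faithful transfer'' route. Concretely, consider the correspondence $G\times\yy \xleftarrow{p'} G\times G/B\times\yy \xrightarrow{q'} \yy^{(2)}$ with $p'(g,xB,x'U,y'U)=(g,x'U,y'U)$ and $q'(g,xB,x'U,y'U)=(xU,x'U,y'U,gxU)$. The map $p'$ is proper and $q'$ is smooth, so $q'_!p'^*\dashv p'_*q'^!\simeq p'_!q'^*[2\dim U]$. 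A base-change computation (parallel to the one you already did for $\bowtie$, but now for $\star_2$) shows $\bar\cS\star_2(-)\simeq q'_!p'^*p'_!q'^*[2\dim U]$, i.e. $\bar\cS\star_2(-)=FG$ for $F=q'_!p'^*$, $G=p'_*q'^!$. Hence $\bar\cS\star_2(-)$ is \emph{automatically} a comonad, which is the same as a coalgebra structure on $\bar\cS$; no separate construction of a counit or comultiplication map is needed. After this, only a diagram chase remains to identify the induced comonad structure on $\bar\cS\bowtie(-)$ with that of $\cS=\hc\chi$ — this is the part you correctly identified as the residual work, but with the adjunction argument in hand it is a routine compatibility of base-change isomorphisms, not a construction.

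One more caution: your proposed geometric counit ``combined with the Springer splitting $\Spgr\to\delta_e$'' is off target. The counit of the comonad $\cS=\hc\chi$ is the adjunction counit $\hc\chi\to\Id$, built from the counits of $p^*\dashv p_*$ and $q_!\dashv q^!$; the splitting $\Spgr\to\delta_e$ is instead a retraction of the \emph{unit} $\Id\to\chi\hc$ and plays no role in the coalgebra structure of $\bar\cS$ — it is used later, in Propositions~\ref{sec:springer-comonad-1} and \ref{sec:co-monads-triang}, to establish separability.
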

\begin{proof}
  
Write
\[
  \alpha: G \times G/B \times G/B \to \yy^{(2)}, \alpha(g,xB,yB) = (xU,yU,gyU,gxU)
\]
(here and below, by the latter we mean the corresponding class in $\yy^{(2)}$) and define the action of $G^2$ on $G \times G/B \times G/B$ using the formula
\[
  (h_1,h_2)\cdot(g,xB,yB) = (h_2gh_1^{-1}, h_1xB,h_1yB),
\]
so that $\alpha$ is a $G^2$-equivariant map.
Let \[\bar{\cS}:= \alpha_!\cc_{(G\times G/B\times G/B)/G^2}[2\dim U] \in \Hecke{2}.\]

From the projection formula and proper base change, it is easy to see that that we have an isomorphism of functors as in \eqref{eq:3}.

To show that $\bar{\cS}$ is a coalgebra object, consider the diagram
\[
\begin{tikzcd}
          & G\times G/B \times \yy \arrow[ld, "p'"'] \arrow[rd, "q'"] &           \\
G\times \yy &                                                       & \yy^{(2)},
\end{tikzcd}
\]
where the maps are given by
\[
  p'(g,xB,x'U,y'U) = (g, x'U,y'U),
\]
\[
  q'(g,xB,x'U,y'U) = (xU,x'U,y'U,gxU),
\]
which are well defined since they are compatible with the right actions of $T$ and $T^2$ on $Y^2$ and $Y^4$.
We define an action of $G^2$ on \(G \times G/B \times \yy\) using the formula
\[
  (h_1,h_2)\cdot(g,xB,x'U, y'U) = (h_2gh_1^{-1}, h_1xB,h_1x'U,h_2y'U)
\]
and on $G\times \yy$ in an evident way to make the map $p'$ equivariant.

We now observe that the map $p'$ is proper, the map $q'$ is smooth, so that the functor $q'_!p'^*$ is left adjoint to the functor $p'_*q'^! \simeq p'_!q'^*[2\dim U]$. Moreover, we have an isomorphism of functors
\begin{equation}
\label{eq:4}
  \bar{\cS}\star_2(-)\simeq q'_!p'^*p'_!q'^*[2 \dim U],
\end{equation}
so that convolution with $\bar{\cS}$ is indeed a comonad, and $\bar{\cS}$ is a coalgebra object. Diagram chase shows that the isomorphism $\bar{\cS}\bowtie(-)\simeq \cS(-)$ is compatible with the comultiplication. 
\end{proof}
\section{Proof of the main Theorem.}
\label{sec:dbgg-as-categorical}
\subsection{Centralizer category as a category of comodules.}  
Recall the notation
\[
 \cZ\Hecke{1} := \operatorname{Fun}_{\Hecke{2}}(\Hecke{1},\Hecke{1})
\]
for the category of module-endofunctors of $\Hecke{1}$ over $\Hecke{2}$.

Recall the evaluation functor \[\varepsilon:\cZ\Hecke{1} \to \Hecke{1}, F \mapsto F(\Hu),\]
and the equivalence \[\mathfrak{F}:D^b(G/_{\Ad} G) \to \comod{\cS}{\Hecke{1}}\] from Corollary \ref{sec:springer-comonad}.

We will deduce Theorem~\ref{sec:hecke-categ-char} from the existence of the following diagram, satisfying properties stated in Propositions \ref{sec:sheaves-group-as-1}, \ref{sec:centr-categ-as-1}:
\begin{equation}
\label{eq:2} 
\begin{tikzcd}
\comod{\cS}{\Hecke{1}}                                                         & \cZ\Hecke{1}\arrow[d,"\varepsilon"] \arrow[l, "\tilde{b}"', bend right] \\
D^b(G/_{\Ad}G) \arrow[ru, "\tilde{a}"] \arrow[r, "\hc"'] \arrow[u, "\mathfrak{F}", "\sim" {rotate=90, anchor=north}] & \Hecke{1}. \arrow[l, "\chi"', shift right]      
\end{tikzcd}
\end{equation}
\begin{proposition}
  \label{sec:sheaves-group-as-1}
  \begin{enumerate}[label=\alph*),ref=(\alph*)]
  \item\label{item:3} There is a monoidal functor \[\tilde{a}:D^b(G/_{\Ad}G)\to\cZ\Hecke{1},\] 
    satisfying $\varepsilon \circ \tilde{a} \simeq \hc$.
  \item\label{item:4} There is a functor \[\tilde{b}:\cZ\Hecke{1}\to\comod{\cS}{\Hecke{1}},\]
satisfying $\hc\circ\mathfrak{F}^{-1}\circ\tilde{b} \simeq \varepsilon$.
  \item\label{item:5} Functors $\tilde{a},\tilde{b}$ satisfy
    \[
      \tilde{b}\circ\tilde{a} \simeq \mathfrak{F}.
    \]
  \item \label{item:11} The functor $\tilde{a}$ is fully faithful. 
  \end{enumerate}
\end{proposition}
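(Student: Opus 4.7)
My plan is to construct $\tilde{a}$ and $\tilde{b}$ as a reciprocal pair whose composition recovers the equivalence $\mathfrak{F}$ of Corollary~\ref{sec:springer-comonad}, and then derive full faithfulness as a formal consequence. The geometric input will generalize Proposition~\ref{sec:spring-comon-bimod}: the same sort of pull-push diagram that built $\bar{\cS}$ there, but with an arbitrary $X \in D^b(G/_{\Ad}G)$ fed in as input, will produce the kernel for $\tilde{a}(X)$.

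For \ref{item:3}, I would associate to each $X \in D^b(G/_{\Ad}G)$ a sheaf $L(X) \in \Hecke{2}$ via a pull-push along a $G^2$-equivariant correspondence modeled on $\alpha\colon G\times G/B\times G/B\to\yy^{(2)}$ from the proof of Proposition~\ref{sec:spring-comon-bimod}, arranged so that $L(X)\bowtie\Hu\simeq \hc(X)$ via the same base-change that identifies $\bar{\cS}\bowtie(-)$ with $\cS(-)$. Setting $\tilde{a}(X)(\mathcal{A}):=L(X)\bowtie\mathcal{A}$ then immediately yields $\varepsilon\circ\tilde{a}\simeq\hc$. The substantive work is to endow $\tilde{a}(X)$ with a $\Hecke{2}$-module functor structure, i.e., to produce coherent natural isomorphisms $\mathcal{B}\star_2 L(X)\simeq L(X)\star_2\mathcal{B}$ compatible with the associators of $\star_2$ and the mixed associator relating $\star_2$ with $\bowtie$; this centrality of $L(X)$ should follow from another base change on the six-term convolution diagram defining $\star_2$, using adjoint-equivariance of $X$ to identify the two pull-push presentations. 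Monoidality of $\tilde{a}$ will then come from an isomorphism $L(X\star Y)\simeq L(X)\star_2 L(Y)$ together with monoidality of $\hc$.

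For \ref{item:4}, I would set $\tilde{b}(F):=F(\Hu)$ and endow it with a $\cS$-comodule structure as follows: the unit $\Hu=\hc(\delta_e)$ acquires a canonical $\cS$-comodule structure $\Hu\to\cS(\Hu)$ by applying $\hc$ to the splitting $\delta_e\to\chi\hc(\delta_e)$ of the adjunction unit furnished by Proposition~\ref{sec:springer-comonad-1}. Since $\cS\simeq\bar{\cS}\bowtie(-)$ with $\bar{\cS}$ a coalgebra in $\Hecke{2}$ (Proposition~\ref{sec:spring-comon-bimod}), Proposition~\ref{sec:comon-module-categ-1} says every $\Hecke{2}$-module endofunctor $F$ sends $\cS$-comodules to $\cS$-comodules, so $F(\Hu)$ inherits a canonical comodule structure. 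The identity $\hc\circ\mathfrak{F}^{-1}\circ\tilde{b}\simeq\varepsilon$ is then immediate, since $\mathfrak{F}$ is by definition the enhancement of $\hc$ through the comodule structure and so $\hc\circ\mathfrak{F}^{-1}$ is the forgetful functor. Part \ref{item:5} is a direct check that the two comodule structures on $\tilde{b}\tilde{a}(X)\simeq\hc(X)$ — one inherited from $\tilde{a}(X)$ via (b), the other built into $\mathfrak{F}(X)$ via Corollary~\ref{sec:springer-comonad} — agree, since both trace back to the same adjunction unit $\Id\to\chi\hc$.

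For \ref{item:11}, by \ref{item:5} the composition $\Hom(X,Y)\xrightarrow{\tilde{a}}\Hom(\tilde{a}X,\tilde{a}Y)\xrightarrow{\tilde{b}}\Hom(\mathfrak{F}X,\mathfrak{F}Y)$ coincides with the isomorphism induced by $\mathfrak{F}$, which immediately yields faithfulness of $\tilde{a}$. For fullness it suffices to show $\tilde{b}$ is injective on Hom-sets between objects in the image of $\tilde{a}$: if $\varphi\colon\tilde{a}X\to\tilde{a}Y$ is a module natural transformation with $\varphi_\Hu=0$, then the module compatibility \eqref{eq:9} propagates the vanishing to $\varphi_{\mathcal{B}\bowtie\Hu}=0$ for all $\mathcal{B}\in\Hecke{2}$; a bootstrap using the counit of the comonad $\cS$, together with closure of the vanishing locus under shifts, cones, and direct summands, then extends $\varphi=0$ to all of $\Hecke{1}$. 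I expect the hardest step to be the coherent centrality of $L(X)$ needed in \ref{item:3}, an iterated base-change argument on the $\yy^{(2)}$-convolution diagrams in which several equivariance structures must be tracked through successive pull-push identifications; the generation/bootstrap argument in \ref{item:11} is a subsidiary obstacle that should follow from the Springer comonad framework already set up in Section~\ref{sec:sheaves-group-as}.
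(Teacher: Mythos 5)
Your constructions in \ref{item:3}--\ref{item:5} are essentially the paper's: $\tilde{a}(X)$ is convolution against a kernel built from $X$ (the paper simply takes $\tilde{a}(X)=-\star_1\hc(X)$, which is $\bowtie$-convolution with the kernel $R(\hc(X))$ in the notation of Lemma \ref{sec:centr-categ-as}, and produces the structure isomorphisms exactly as you predict, by base change and the projection formula using equivariance of the Hecke kernel); $\tilde{b}(F)=F(\Hu)$ with the comodule structure coming from Proposition \ref{sec:comon-module-categ-1} and $\Hu\simeq\hc(\delta_e)$; and \ref{item:5} is the comparison of the two comodule structures, both traced to the same map $\hc(\delta_e)\to\hc(\Spgr)$. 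One caution on \ref{item:3}: if you literally feed $X$ into the correspondence $\alpha$ of Proposition \ref{sec:spring-comon-bimod}, with its $G/B$-factors, the resulting kernel computes a functor of the type $\hc(X\otimes\chi(-))$, whose value at $\Hu$ is $\hc(X\otimes\Spgr)$, not $\hc(X)$; the correct kernel has no flag factors (push forward along $(g,xU,yU)\mapsto(xU,xU,yU,gyU)$), which is why the paper works directly with $-\star_1\hc(X)$.

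The genuine gap is in \ref{item:11}, in the fullness step. Your reduction — it suffices that $\tilde{b}$ (equivalently $\varepsilon$) be faithful on the image of $\tilde{a}$ — is correct, and the module compatibility \eqref{eq:9} does give $\varphi_{\mathcal{B}\bowtie\Hu}=0$ for all $\mathcal{B}\in\Hecke{2}$ once $\varphi_{\Hu}=0$. But the proposed bootstrap does not work: the class of objects on which the components of a natural transformation vanish is closed under shifts and direct summands but \emph{not} under cones (for a triangle $A\to B\to C$, $\varphi_A=\varphi_B=0$ only forces $\varphi_C$ to factor through the connecting morphisms, not to vanish), and the counit of $\cS$ gives no splitting on arbitrary objects of $\Hecke{1}$ — the splitting of Proposition \ref{sec:springer-comonad-1} concerns the unit on $D^b(G/_{\Ad}G)$, not the counit on $\Hecke{1}$. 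The paper closes exactly this hole with Lemma \ref{sec:centr-categ-as}: there is a monoidal functor $L\colon\Hecke{1}\to\Hecke{2}$ with $\mathcal{A}\star_1\mathcal{B}\simeq L(\mathcal{A})\bowtie\mathcal{B}$, so that every object satisfies $\mathcal{A}\simeq\mathcal{A}\star_1\Hu\simeq L(\mathcal{A})\bowtie\Hu$ and therefore already lies in the family you control; the diagram \eqref{eq:9} applied at $(L(\mathcal{A}),\Hu)$ then shows $\tau_{\mathcal{A}}$ is determined by $\tau_{\Hu}$, with no generation argument at all. Without this lemma, or some substitute that \emph{exhausts} (rather than merely generates) $\Hecke{1}$ by objects of the form $\mathcal{B}\bowtie\Hu$, the faithfulness of $\varepsilon$, hence the fullness of $\tilde{a}$, is not established.
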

\begin{proposition}
  \label{sec:centr-categ-as-1}
  There is a natural isomorphism of functors
  \[
    \tilde{a}\circ\chi\circ\varepsilon(-)\simeq (-) \circ \tilde{a}(\Sigma),
  \]
  where $\circ$ denotes the monoidal structure on $\cZ\Hecke{1}$ coming from the composition of functors.
\end{proposition}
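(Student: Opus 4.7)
The plan is to trace both sides of the claimed iso through the equivalence $\mathfrak{F}$ of Corollary \ref{sec:springer-comonad}, using $\tilde{b}\circ\tilde{a}\simeq\mathfrak{F}$ (Proposition \ref{sec:sheaves-group-as-1}\ref{item:5}) and the identification $\cS\simeq\bar{\cS}\bowtie(-)$ from Proposition \ref{sec:spring-comon-bimod}. This reduces the question to producing a natural isomorphism between $\mathfrak{F}(\chi\varepsilon F)$ and $\tilde{b}(F\circ\tilde{a}(\Sigma))$ as $\cS$-comodules in $\Hecke{1}$, after which the full faithfulness of $\tilde{a}$ (Proposition \ref{sec:sheaves-group-as-1}\ref{item:11}) will yield the desired iso in $\cZ\Hecke{1}$.

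First I would identify the underlying $\Hecke{1}$-objects on both sides via $\varepsilon$. Using $\varepsilon\circ\tilde{a}\simeq\hc$ and Lemma \ref{sec:springer-comonad-4}, the LHS gives
\[
\varepsilon(\tilde{a}(\chi\varepsilon F)) \simeq \hc\chi(F(\Hu)) \simeq \cS(F(\Hu)) \simeq \bar{\cS}\bowtie F(\Hu),
\]
while the RHS gives
\[
\varepsilon(F\circ\tilde{a}(\Sigma)) = F(\hc(\Sigma)) \simeq F(\bar{\cS}\bowtie\Hu) \xrightarrow{(s^F_{\bar{\cS},\Hu})^{-1}} \bar{\cS}\bowtie F(\Hu),
\]
via the $\Hecke{2}$-module functor structure $s^F$ of $F$. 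Thus the underlying objects agree canonically, and I would then match $\cS$-comodule structures: on the LHS, the cofree structure induced by the coproduct $\bar{\Delta}:\bar{\cS}\to\bar{\cS}\star_2\bar{\cS}$ of $\bar{\cS}$; on the RHS, the structure obtained by applying $F$ to the cofree coaction on $\cS(\Hu)$ and then transporting through $s^F$. Their coincidence is precisely the module-functor axiom for $F$ applied to $\bar{\Delta}$.

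The main obstacle will be the diagram-chasing needed to promote this object-wise identification to a natural transformation of functors $\cZ\Hecke{1}\to\cZ\Hecke{1}$: one must verify (i) that the constructed iso respects the $\Hecke{2}$-module functor structures on both sides, so that it actually lives in $\cZ\Hecke{1}$ rather than merely in $\Hecke{1}$, and (ii) that it is natural in the variable $F$. Both reductions rely on the explicit geometric construction of $\bar{\cS}$ and its coalgebra structure from the proof of Proposition \ref{sec:spring-comon-bimod}, combined with the coherence axioms for module categories.
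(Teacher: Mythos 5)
Your sketch correctly identifies the underlying $\Hecke{1}$-level isomorphism (which matches the paper's $\tau_{\Hu}$, built out of the module-functor structure maps $s^F$), but the overall architecture does not close. The claimed reduction to an isomorphism of $\cS$-comodules followed by ``full faithfulness of $\tilde{a}$'' is circular: full faithfulness of $\tilde{a}$ concerns morphisms between objects already known to lie in the essential image of $\tilde{a}$, and the whole point of this proposition (as used in the proof of Theorem~\ref{sec:hecke-categ-char}) is to establish that $F\circ\tilde{a}(\Sigma)$ lies in that image. Nor is $\tilde{b}$ known to be full at this stage, so producing an isomorphism $\mathfrak{F}(\chi\varepsilon F)\simeq\tilde{b}\bigl(F\circ\tilde{a}(\Sigma)\bigr)$ in $\comod{\cS}{\Hecke{1}}$ does not by itself produce a morphism in $\cZ\Hecke{1}$. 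What one actually needs is to exhibit the comparison as a natural transformation between the two functors on $\Hecke{1}$ \emph{compatible with the $\Hecke{2}$-module structures}, and then use conservativity of $\varepsilon$ (which follows from the argument for its faithfulness, since every component $\tau_{\mathcal A}$ is conjugate to $L(\mathcal A)\bowtie\tau_{\Hu}$) -- not full faithfulness of $\tilde{a}$.

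More seriously, you dismiss the compatibility check -- your point (i) -- as an application of ``the explicit geometric construction of $\bar{\cS}$... combined with the coherence axioms for module categories,'' but this is precisely where the real content of the proof lives, and it is \emph{not} a formal consequence of the module-category axioms. The paper needs the concrete geometric Lemma~\ref{sec:proof-prop-refs}, asserting a natural isomorphism $c_{\mathcal{B},\mathcal{A}}:\mathcal{B}\star_2 L(\mathcal{A})\star_2\bar{\cS}\rsim L(\mathcal{B}\bowtie\mathcal{A})\star_2\bar{\cS}$ together with its compatibility with the central structure on $\tilde{a}(\chi(\F))$. That isomorphism is established by a chain of base changes on $\yy^{(2)}$ using the specific definition of $\bar{\cS}$ and of $L$, and it is then plugged into the big $4\times 2$ diagram in the paper's proof (with sub-squares I--IV) to obtain commutativity of~\eqref{eq:12}. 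Without a statement and proof of something equivalent to Lemma~\ref{sec:proof-prop-refs}, the coherence axioms alone will not let you commute $\mathcal{B}\bowtie -$ past $-\star_1\hc(\Sigma)$, and the ``main obstacle'' remains open.
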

\begin{proof}[Proof of Theorem \ref{sec:hecke-categ-char}.]
We claim that the monoidal functor $\tilde{a}$ of Proposition \ref{sec:sheaves-group-as-1} \ref{item:3} is an equivance of monoidal categories. For this it suffices to show that $\tilde{a}$ is an equivalence of plain categories.

By Propositions \ref{sec:centr-categ-as-1} and \ref{sec:springer-comonad-2} every object in $\mathcal{Z}\Hecke{1}$ is a direct summand of an object in the image of $\tilde{a}$. Since both source and target of $\tilde{a}$ are idempotent complete, and $\tilde{a}$ is fully faithful by Proposition \ref{sec:sheaves-group-as-1} \ref{item:11}, we get that $\tilde{a}$ is essentially surjective, hence the result.
\end{proof}
\begin{corollary}
The functor $\chi\circ\varepsilon$ is equipped with $W$-action, such that we can express the functor inverse to $\tilde{a}$ as $(\chi\circ\varepsilon)^W$, where $(-)^{W}$ stands for the functor of taking invariants with respect to $W$.
\end{corollary}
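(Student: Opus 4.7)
The plan is to combine Proposition \ref{sec:centr-categ-as-1}, which identifies $\tilde{a} \circ \chi \circ \varepsilon(-)$ with post\-composition by $\tilde{a}(\Sigma)$ in $\mathcal{Z}\Hecke{1}$, with Proposition \ref{sec:springer-comonad-2}, which equips $\Sigma$ with a $W$-action whose invariants are the monoidal unit $\delta_e$.

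First I will construct the $W$-action on $\chi \circ \varepsilon$. The $W$-action on $\Sigma \in D^b(G/_{\Ad}G)$ is transported along the monoidal functor $\tilde{a}$ (Proposition \ref{sec:sheaves-group-as-1} \ref{item:3}) to a $W$-action on $\tilde{a}(\Sigma) \in \mathcal{Z}\Hecke{1}$. By bifunctoriality of composition (the monoidal product in $\mathcal{Z}\Hecke{1}$), this induces a $W$-action on the endofunctor $(-) \circ \tilde{a}(\Sigma) : \mathcal{Z}\Hecke{1} \to \mathcal{Z}\Hecke{1}$. Via the natural isomorphism of Proposition \ref{sec:centr-categ-as-1} and the equivalence $\tilde{a}$ of Theorem \ref{sec:hecke-categ-char}, this transports to the desired $W$-action on $\chi \circ \varepsilon$.

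Next I will compute the $W$-invariants. Since $|W|$ is invertible in $\mathbb{k}$ throughout, $(-)^W$ is computed by the symmetrizing idempotent $\tfrac{1}{|W|}\sum_{w\in W} w$ and hence commutes with every additive functor and with composition on either side in $\mathcal{Z}\Hecke{1}$. Applying this to the isomorphism of Proposition \ref{sec:centr-categ-as-1} gives
\[
  \tilde{a} \circ (\chi \circ \varepsilon)^W \simeq \bigl((-) \circ \tilde{a}(\Sigma)\bigr)^W \simeq (-) \circ \tilde{a}(\Sigma^W) \simeq (-) \circ \tilde{a}(\delta_e).
\]
Since $\tilde{a}$ is monoidal and $\delta_e$ is the monoidal unit of $D^b(G/_{\Ad}G)$, the object $\tilde{a}(\delta_e)$ is the monoidal unit of $\mathcal{Z}\Hecke{1}$, namely the identity module-endofunctor $\Id_{\Hecke{1}}$. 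Therefore $\tilde{a} \circ (\chi \circ \varepsilon)^W \simeq \Id_{\mathcal{Z}\Hecke{1}}$, so $(\chi \circ \varepsilon)^W \simeq \tilde{a}^{-1}$.

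The main subtlety I expect is checking that the $W$-action on $\chi \circ \varepsilon$ produced by transport along Proposition \ref{sec:centr-categ-as-1} agrees with the one coming directly from the geometry — in particular the $W$-action on $\chi \circ \hc \simeq \Sigma \star (-)$ of Lemma \ref{sec:springer-comonad-4}. This amounts to verifying $W$-equivariance of the natural isomorphism in Proposition \ref{sec:centr-categ-as-1}; it is a purely formal check once the construction of that isomorphism is spelled out, but it is the only point at which one must leave the realm of abstract nonsense and appeal to how the geometric identifications were built.
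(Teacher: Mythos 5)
Your proof is correct, but it takes a somewhat different route from the paper's. The paper's one-sentence argument uses parts b) and c) of Proposition \ref{sec:sheaves-group-as-1} to write $\chi\circ\varepsilon\simeq\chi\circ\hc\circ\mathfrak{F}^{-1}\circ\tilde{b}$ and equips this with the $W$-action coming from the geometric action on $\chi\circ\hc\simeq\Spgr\star(-)$ (Lemma \ref{sec:springer-comonad-4} and Proposition \ref{sec:springer-comonad-2}); taking invariants turns $\Spgr\star(-)$ into $\delta_e\star(-)\simeq\Id$ and exhibits the inverse concretely as $\mathfrak{F}^{-1}\circ\tilde{b}$, which is quasi-inverse to $\tilde{a}$ because $\tilde{b}\circ\tilde{a}\simeq\mathfrak{F}$. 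You instead stay entirely on the $\mathcal{Z}\Hecke{1}$ side: you transport the $W$-action on $\Spgr$ through $\tilde{a}$ and the isomorphism of Proposition \ref{sec:centr-categ-as-1}, and conclude by monoidality of $\tilde{a}$ and unitality of $\tilde{a}(\delta_e)\simeq\Id_{\Hecke{1}}$. Both arguments rest on the same Springer-theoretic input $\Spgr^W\simeq\delta_e$; the paper's version buys an explicit model $\mathfrak{F}^{-1}\circ\tilde{b}$ of $\tilde{a}^{-1}$ carrying the geometrically defined $W$-action, while yours avoids $\tilde{b}$ and $\mathfrak{F}$ altogether. Two small remarks: the compatibility check you flag at the end is not actually needed for the statement as written, which only asserts the existence of some $W$-action whose invariants give the inverse (identifying your transported action with the geometric one on $\Spgr\star(-)$ would indeed require unwinding the construction in Proposition \ref{sec:centr-categ-as-1}, but neither proof needs this); and your step $\bigl((-)\circ\tilde{a}(\Spgr)\bigr)^W\simeq(-)\circ\tilde{a}(\Spgr^W)$ silently uses that objects of $\mathcal{Z}\Hecke{1}$ are $\mathbb{k}$-linear functors (so that whiskering with $F$ preserves the averaging idempotent) and that the relevant categories are idempotent complete so the invariants exist --- both harmless under the paper's standing linearity assumptions, or because, by Theorem \ref{sec:hecke-categ-char}, every object of $\mathcal{Z}\Hecke{1}$ is of the form $-\star_1\hc(\F)$.
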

\begin{proof}
Note that by Proposition \ref{sec:sheaves-group-as-1}~\ref{item:4} \ref{item:5}, the functor $\chi\circ\varepsilon$ is isomorphic to the functor $\chi \circ \hc \circ \mathfrak{F}^{-1} \circ \tilde{b}$, and so is equipped with $W$-action, coming from the action of $W$ on $\chi\circ\hc \simeq \Sigma \star (-)$ (see Proposition \ref{sec:springer-comonad-2} and  Lemma \ref{sec:springer-comonad-4}).
\end{proof}
\subsection{Proof of Proposition \ref{sec:sheaves-group-as-1}.}
We will need the following
\begin{lemma}
  \label{sec:centr-categ-as}
There are monoidal functors $L,R:\Hecke{1}\to\Hecke{2}$ such that 
  there are isomorphisms
    \[
      \mathcal{A} \star_1 \mathcal{B} \simeq L(\mathcal{A})\bowtie \mathcal{B},
    \]
    \[
        \mathcal{B} \star_1 \mathcal{A} \simeq R(\mathcal{A})\bowtie  \mathcal{B},
    \]
    functorial in $\mathcal{A},  \mathcal{B} \in \Hecke{1}$.
  \end{lemma}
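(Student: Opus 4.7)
The plan is to construct $L$ (and, symmetrically, $R$) as a pushforward along an explicit correspondence, and to verify the stated isomorphism by the projection formula and base change. I set $Z := Y^3/T$ with diagonal right $T$-action and left $G^2$-action $(g,h)\cdot(y_1,y_2,y_3) = (g y_1, g y_2, h y_3)$, and consider the two morphisms
\[
  \pi_L \colon Z \to \yy,\quad [(y_1,y_2,y_3)] \mapsto [(y_1,y_2)],
\]
which is equivariant with respect to the first-factor projection $G^2 \to G$, and
\[
  \mu_L \colon Z \to \yy^{(2)},\quad [(y_1,y_2,y_3)] \mapsto [(y_1,y_2,y_3,y_3)],
\]
which one checks is $G^2$-equivariant and descends through the $T$- and $T^2$-quotients to a closed immersion onto the substack $\{[(x_1,x_2,x_3,x_4)] : x_3 B = x_4 B\} \subset \yy^{(2)}$. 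I then define $L(\mathcal{A}) := (\mu_L)_! \pi_L^* \mathcal{A}$. The construction of $R$ is entirely parallel, with $Z$ now carrying the $G^2$-action $(g,h)(y_1,y_2,y_3) = (g y_1, h y_2, h y_3)$, and with $\mu_R[(y_1,y_2,y_3)] = [(y_1,y_1,y_2,y_3)]$, $\pi_R[(y_1,y_2,y_3)] = [(y_2,y_3)]$.

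To verify $L(\mathcal{A}) \bowtie \mathcal{B} \simeq \mathcal{A} \star_1 \mathcal{B}$, I apply $\operatorname{For}^{G^2}_G$ (which commutes with our construction of $L$) and then use the projection formula and base change to identify
\[
  p_{14!}\bigl((\mu_L)_!\pi_L^*\mathcal{A} \otimes p_{23}^*\mathcal{B}\bigr) \simeq (p_{14}\circ\mu_L)_!\bigl(\pi_L^*\mathcal{A} \otimes (p_{23}\circ\mu_L)^*\mathcal{B}\bigr).
\]
The elementary identifications $p_{14}\circ\mu_L = p_{13}$, $p_{23}\circ\mu_L = p_{23}$, and $\pi_L = p_{12}$ as maps $Y^3/T \to \yy$ then yield exactly the defining formula $p_{13!}(p_{12}^*\mathcal{A} \otimes p_{23}^*\mathcal{B}) = \mathcal{A} \star_1 \mathcal{B}$. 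The check that $R(\mathcal{A}) \bowtie \mathcal{B} \simeq \mathcal{B} \star_1 \mathcal{A}$ proceeds in exactly the same way.

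To upgrade $L$ and $R$ to monoidal functors I plan to establish $L(\mathcal{A} \star_1 \mathcal{A}') \simeq L(\mathcal{A}) \star_2 L(\mathcal{A}')$ and the analogous identity for the monoidal units by a further base-change diagram chase, expressing both sides as pushforwards from a common fiber product constructed out of the $Y^6/T^3$ diagram that defines $\star_2$. The geometric content is elementary; the principal obstacle is the careful bookkeeping of the various $T$-, $T^2$-, $T^3$-, $G$-, and $G^2$-equivariances and of the compatibility of $!$-pushforward and $*$-pullback across the relevant quotient stacks, in the same spirit as the proof of Proposition~\ref{sec:spring-comon-bimod}.
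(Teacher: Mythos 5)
Your construction coincides with the paper's: the same correspondence $Y^3/T$ with the same $G^2$- and diagonal $T$-actions, the same closed immersion into $\yy^{(2)}$ (your $\mu_L$ is the paper's $\iota$, so $\mu_{L!}\pi_L^* = \iota_*p_{12}^*$), and the same projection-formula/base-change verification identifying $p_{14}\circ\mu_L$ with $p_{13}$ (and symmetrically for $R$). The monoidality of $L$ you only outline, but your outlined route --- a base-change diagram chase through the $Y^6/T^3$ diagram defining $\star_2$ --- is exactly how the paper completes the argument, so the proposal is correct and essentially identical in approach.
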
 
  \begin{proof}
We construct the functor $L$, the functor $R$ is constructed completely analogously. Consider the space $Y^3$ together with the action of $G^2$ on the left given by 
\[
(g_1,g_2)\cdot(x_1U, x_2U, x_3U)\mapsto(g_1x_1U,g_1x_2U, g_2x_3U)
\]
and the diagonal action of $T$ on the right. 
Then the closed embedding $Y^3\to Y^4$ given by
\[
(x_1U, x_2U, x_3U)\mapsto (x_1U, x_2U, x_3U, x_3U)
\]
induces a $G^2$-equivariant map $\iota\colon Y^3/T\to\yy^{(2)}$. There is also a projection on the first two coordinates $p_{12}\colon Y^3\to Y^2$,
which induces a map $p_{12}\colon Y^3/T\to\yy$. This map is $G^2$-equivariant, with the action of the first copy of $G$ on $\yy$ being the usual diagonal action and the action of the second copy being trivial. Abusing notation, write $p_{12}$ also for the composition \[p_{12}:G^2\backslash Y^3/T \to G^2\backslash\yy \to G\backslash \yy,\] where the second map is associated to the first projection $G^2 \to G.$ We define
\[
L:=\iota_*p_{12}^*:\Hecke{1} \to \Hecke{2}.
\]

To prove the required property, observe that the projection $p_{14}$ in the definition of the $\bowtie$ restricted to the image of $\iota$ 
is exactly the projection $p_{13}\colon Y^3/T\to\yy$ used in the definition of the $\star_1$. The sheaves being pushed forward are identified by $T$-equivariance. 

Finally, to observe the monoidality of the functor $L$ notice that we have 
\[p_{1256}^*L(\mathcal{A})\otimes p_{2345}^*L(\mathcal{B})\simeq\iota^{(2)}_*(p_{12}^*\mathcal{A}\otimes p_{23}^*\mathcal{B}),\] 
where $\iota^{(2)}\colon Y^4/T\to Y^6/T^3$ is induced by 
\[
(x_1U, x_2U, x_3U, x_4U)\mapsto (x_1U, x_2U, x_3U, x_4U, x_4U, x_4U)
\]
and $p_{ij}\colon Y^4/T\to\yy$ are the projections.
Moreover, the diagram 
\[
  \begin{tikzcd}
   Y^3/T  \ar[d, "\iota"'] & Y^4/T \ar[l,"p_{134}"']\ar[d,"\iota^{(2)}"] \\
    \yy^{(2)} & Y^6/T^3 \ar[l,"p_{1346}"]
  \end{tikzcd}
\]
is commutative implying that
\[
L(\mathcal{A})\star_2L(\mathcal{B})\simeq\iota_*p_{134!}(p_{12}^*\mathcal{A}\otimes p_{23}^*\mathcal{B}).
\]
It remains to see that the diagram
\[
  \begin{tikzcd}
   Y^3/T  \ar[d, "p_{12}"'] & Y^4/T \ar[l,"p_{134}"']\ar[d,"p_{123}"] \\
    \yy & Y^3/T \ar[l,"p_{13}"]
  \end{tikzcd}
\]
is Cartesian and base change will give 
\[
L(\mathcal{A})\star_2L(\mathcal{B})\simeq L(\mathcal{A}\star_1\mathcal{B})
\]
as desired.

\end{proof}
  \begin{proof}[Proof of Proposition \ref{sec:sheaves-group-as-1}.]
  We first prove \ref{item:3}.
  We define $\tilde{a}$ as 
\[
\tilde{a}\colon \F\mapsto -\star_1\hc(\F),
\]
together with the central structure provided by an argument similar to \cite[Proposition 9.2.1(ii)]{ginzburgAdmissibleModulesSymmetric1989}. In more details, we need to define the canonical isomorphism
\begin{equation}
\label{eq:15}
s^{\tilde{a}(\F)}_{\mathcal{B},\mathcal{A}}\colon \mathcal{B}\bowtie (\mathcal{A}\star_1\hc(\F))\xrightarrow{\sim}(\mathcal{B}\bowtie \mathcal{A})\star_1\hc(\F)
\end{equation}
for each $\mathcal{A}$ and $\mathcal{B}$. 
Combining diagrams used to define $-\star_1-$ and $\hc$ one sees that $-\star_1\hc(\F)$ is given by the push forward of the sheaf $\F\boxtimes -$ along the map
\[
f\colon G\times\yy\to\yy,\enskip (g,x_1U,x_2U)\mapsto(x_1U,gx_2U)
\] 
Therefore, we can write the right hand side of (\ref{eq:15}) as push forward along the map $f\circ(\mathrm{id}\times p_{14})\colon G\times\yy^{(2)}\to \yy$: 
\[(\mathcal{B}\bowtie \mathcal{A})\star_1\hc(\F)\simeq\left(f\circ(\mathrm{id}\times p_{14})\right)_*\left(\F\boxtimes(\operatorname{For}^{G^2}_G(\mathcal{B}) \otimes p_{23}^*(\mathcal{A}))\right).\] On the other hand, to compute the left hand side of (\ref{eq:15}) by the base change and the projection formula we take the push forward along the map $p_{14}\colon G\times\yy^{(2)}\to\yy$: \[\mathcal{B}\bowtie (\mathcal{A}\star_1\hc(\F))\simeq p_{14,*}\left(pr_G^*\F\otimes \operatorname{For}^{G^2}_G(f_3^*\mathrm{inv}_G^*\mathcal{B})\otimes p_{23}^*(\mathcal{A})\right),\] where $pr_G$ is the projection on the $G$ factor and $f_3$ is the map of $G$-action on the third coordinate and $\mathrm{inv}_G$ is the inversion map $g\mapsto g^{-1}$ on $G$ factor. Using the $G$-equivariance of $\mathcal{B}$ with respect to the second copy of $G$ we get the canonical isomorphism $f_3^*\mathrm{inv}_G^*\mathcal{B}\simeq f_4^*\mathcal{B}$, where $f_4$  is the map of $G$-action on the fourth coordinate. We finally get the desired isomorphism by the projection formula and the identification $f\circ(\mathrm{id}\times p_{14})=p_{14}\circ f_4$.

We now prove \ref{item:4}.  First recall that, by Proposition~\ref{sec:spring-comon-bimod} there is a coalgebra object $\bar{\cS} \in \Hecke{2}$ such that there is an isomorphism of comonads
\begin{equation}
\bar{\cS}\bowtie(-)\simeq\cS(-).
\end{equation}
Define $\tilde{b}(F) := F(\Hu)$, an $\cS$-comodule by Proposition~\ref{sec:comon-module-categ-1} together with the observation that $\Hu \simeq \hc(\delta_e)$ has a canonical $\cS$-comodule structure as an image of an object under $\hc$, by the general discussion of Subsection \ref{sec:comon-triang-categ-8}. 

The isomorphism $\hc\circ\mathfrak{F}^{-1}\circ\tilde{b} \simeq \varepsilon$ is again immediate from the definition.
 This completes the proof of \ref{item:4}.

For \ref{item:5}, note that we have the following isomorphism of plain objects in $\Hecke{1}:$ 
\[
\tilde{b}\circ\tilde{a}(\F)\simeq \hc(\F)\star_1\hc(\delta_e)\simeq \hc(\F),
\]
where the last equivalence is by monoidality of $\hc$. The $\cS$-comodule on $\hc(\F)$ comes from the $\cS$-comodule structure on the second factor $\hc(\delta_e)$. 
In other words, it is given by the composition 
\begin{multline*}
 \hc(\F)\simeq \hc(\F)\star_1\hc(\delta_e)\to\\
  \to\hc(\F)\star_1\cS(\hc(\delta_e))\simeq \hc(\F)\star_1\hc(\Sigma)\simeq \cS(\hc(\F)).
\end{multline*}
The functor $\mathfrak{F}$ is essentially defined by the same map, i.e. ${\hc(\delta_e)\to\hc(\Sigma)}$ convolved with the identity map on $\hc(\F)$.  

To prove \ref{item:11} we first note that by \ref{item:5} we have $\hc \simeq \varepsilon\circ\tilde{a}$, and $\hc$ is a faithful functor by Proposition \ref{sec:springer-comonad-1}, so $\tilde{a}$ is also faithful.

We now show that the functor $\varepsilon$ is faithful. Recall that a morphism between $F, G \in \mathcal{Z}\Hecke{1}$ is a natural transformation $\tau:F \to G$ making diagrams of the form \eqref{eq:9} commutative. It follows from Lemma \ref{sec:centr-categ-as} and diagram
\[
  \begin{tikzcd}
    F(\mathcal{A}) \simeq F(L(\mathcal{A})\bowtie\Hu)  \ar[d, "\tau_\mathcal{A}"'] & L(\mathcal{A})\bowtie F(\Hu) \ar[l,"s^F_{L(\mathcal{A}),\Hu}"']\ar[d,"L(\mathcal{A})\,\bowtie\,\tau_{\Hu}"] \\
    G(\mathcal{A}) \simeq G(L(\mathcal{A})\bowtie\Hu) & L(\mathcal{A})\bowtie G(\Hu) \ar[l,"s^G_{L(\mathcal{A}),\Hu}"]
  \end{tikzcd}
\]
being commutative, that such a transformation is defined by its value $\tau_{\Hu}$ on $\Hu$, and so $\varepsilon$ is indeed faithful. 

By \ref{item:5}, $\hc \circ \mathfrak{F}^{-1} \circ \tilde{b} \simeq \varepsilon$, and so $\tilde{b}$ is also faithful. Since $\tilde{b}\circ\tilde{a}$ is an equivalence by \ref{item:5}, it follows that $\tilde{a}$ is full, which finishes the proof of \ref{item:11}.
\end{proof}

\subsection{Proof of Proposition \ref{sec:centr-categ-as-1}.}  

We will need the following
\begin{lemma}
  \label{sec:proof-prop-refs}
  For any $\mathcal{A} \in \Hecke{1}, \mathcal{B} \in \Hecke{2}$ there is an isomorphism natural in $\mathcal{A}$ and $\mathcal{B}$
    \[
      c_{\mathcal{B},\mathcal{A}}:\mathcal{B} \star_2 L(\mathcal{A}) \star_2 \bar{\mathcal{S}} \rsim L(\mathcal{B} \bowtie \mathcal{A}) \star_2 \bar{\mathcal{S}},
    \]
    such that, for any $\F \in \Hecke{1}$, the diagram
    \[
      \begin{tikzcd}
        (\mathcal{B} \star_2 L(\mathcal{A}) \star_2 \bar{\mathcal{S}}) \bowtie \F \arrow[r]\arrow[d, "c_{\mathcal{B},\mathcal{A}}\bowtie\,\F"'] & \mathcal{B} \bowtie (\mathcal{A} \star_1 \mathcal{S}\F) \arrow[d,"s_{\mathcal{B},\mathcal{A}}^{\tilde{a}(\chi(\F))}"]\\
        (L(\mathcal{B}\bowtie \mathcal{A})\star_2\bar{\mathcal{S}})\bowtie \F \arrow[r] & (\mathcal{B} \bowtie \mathcal{A})\star_1 \mathcal{S}\F
      \end{tikzcd}
    \]
    is commutative. In the diagram, horisontal arrows are defined using the isomorphisms from Lemma \ref{sec:centr-categ-as} and $\bar{\mathcal{S}}\bowtie\F \simeq \mathcal{S}\F$.
\end{lemma}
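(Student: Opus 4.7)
The plan is to construct $c_{\mathcal{B},\mathcal{A}}$ by direct geometric manipulation, and then to verify commutativity of the diagram by comparing this construction side-by-side with the construction of $s^{\tilde a(\chi(\F))}$ from the proof of Proposition~\ref{sec:sheaves-group-as-1}\ref{item:3}. Both constructions rest on the same $G$-equivariance argument, so the diagram will commute almost by design.

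First I would write $L(\mathcal{A}) \star_2 \bar{\cS}$ as a single pushforward from the fibre product of the spaces appearing in the definitions of $L$ (the map $\iota\colon Y^3/T\to\yy^{(2)}$ together with $p_{12}$), $\bar{\cS}$ (the map $\alpha$ from Proposition~\ref{sec:spring-comon-bimod}) and $\star_2$ (the maps $p_{1256},p_{2345},p_{1346}$ out of $Y^6/T^3$), using proper base change and the projection formula. A similar computation expresses $\mathcal{B}\star_2 L(\mathcal{A}) \star_2 \bar{\cS}$ and $L(\mathcal{B}\bowtie\mathcal{A}) \star_2 \bar{\cS}$ as pushforwards from two spaces which, upon inspecting the equations cutting them out, differ only by which ``$G$-factor'' of $\mathcal{B}$ is pulled back along one of the coordinate projections. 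The identification $f_3^*\mathrm{inv}_G^*\mathcal{B} \simeq f_4^*\mathcal{B}$ coming from $G^2$-equivariance of $\mathcal{B}$, exactly as in the proof of Proposition~\ref{sec:sheaves-group-as-1}\ref{item:3}, then yields the desired isomorphism $c_{\mathcal{B},\mathcal{A}}$. Naturality in $\mathcal{A}$ and $\mathcal{B}$ is automatic from naturality of base change and of the equivariance isomorphism.

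To check commutativity of the diagram, I would evaluate both paths on $\F \in \Hecke{1}$, using $L(\mathcal{X})\bowtie\F \simeq \mathcal{X}\star_1\F$ from Lemma~\ref{sec:centr-categ-as} and $\bar{\cS}\bowtie\F \simeq \cS\F$ from Proposition~\ref{sec:spring-comon-bimod}. Both the top-then-right and left-then-bottom routes from $(\mathcal{B}\star_2 L(\mathcal{A})\star_2\bar{\cS})\bowtie\F$ to $(\mathcal{B}\bowtie\mathcal{A})\star_1\cS\F$ then unfold into the same composition of base change isomorphisms, projection formulas, and a single application of the $G^2$-equivariance of $\mathcal{B}$; the latter is precisely the map used to build $s^{\tilde a(\chi(\F))}_{\mathcal{B},\mathcal{A}}$ in the proof of Proposition~\ref{sec:sheaves-group-as-1}\ref{item:3}, so commutativity is a formal consequence. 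The main obstacle is bookkeeping: the spaces $Y^k/T^m$ and $G\times G/B\times G/B$ involved in the various convolutions are large, each with its own $G^n$- and $T^m$-actions, and one must verify throughout that all intermediate maps descend to the correct quotients and intertwine the equivariance data. Organizing the argument as a single combined geometric diagram from which every instance of base change and projection formula reads off mechanically should make the verification routine.
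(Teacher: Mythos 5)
Your plan follows essentially the same route as the paper's proof: both sides are unwound into explicit pushforwards via proper base change and the projection formula, the isomorphism $c_{\mathcal{B},\mathcal{A}}$ is ultimately supplied by the $G^2$-equivariance of $\mathcal{B}$ with respect to the second copy of $G$, and the square commutes precisely because the central structure $s^{\tilde{a}(\chi(\F))}$ from Proposition \ref{sec:sheaves-group-as-1} is built from that same equivariance isomorphism. The only real difference is organizational: the paper first reduces to $\mathcal{A}=\Hu$ by absorbing $L(\mathcal{A})$ into $\mathcal{B}$ (using monoidality of $L$), which substantially shrinks the geometric diagrams you would otherwise need to chase for general $\mathcal{A}$.
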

\begin{proof}
We freely use the notations from the proof of Lemma \ref{sec:centr-categ-as}.

It is sufficient to verify the required condition for $\mathcal{A}=\Hu$. Indeed, if this is done we can set $\mathcal{B}':=\mathcal{B}\star_2 L(\mathcal{A})$, so that 
$\mathcal{B} \star_2 L(\mathcal{A}) \star_2 \bar{\mathcal{S}}=\mathcal{B}'\star_2\bar{\mathcal{S}}$ and $\mathcal{B} \bowtie \mathcal{A}=\mathcal{B}'\bowtie\Hu$ and we can set $c_{\mathcal{B},\mathcal{A}}:=c_{\mathcal{B}',\Hu}$ as long as we know the latter (note that $L(\Hu)$ is the monoidal unit of $\Hecke{2}$ as $L$ is the monoidal functor).

We now have to construct an isomorphism
\[
c_{\mathcal{B},\Hu}\colon \mathcal{B}\star_2\bar{\mathcal{S}}\to L(\mathcal{B}\bowtie\Hu)\star_2\bar{\mathcal{S}},
\]
satisfying the required compatibility. We have a description of the functor $-\star_2\bar{\mathcal{S}}$ similar to  (\ref{eq:4}), with $q'$ replaced by \[q'(g,xB,x'U,y'U) = (x'U,xU,gxU,y'U).\] It is, therefore, sufficient to construct a canonical isomorphism 
\[
p'_!q'^{*}\mathcal{B}\simeq p'_!q'^{*}(L(\mathcal{B}\bowtie\Hu)).
\]
By the definition, we can further unwrap the right hand side as 
\[
p'_!q'^{*}(L(\mathcal{B}\bowtie\Hu))\simeq p'_!q'^*\iota_*p_{12}^*p_{13,!}\Delta^*_{23}\mathcal{B},
\]
where $\Delta_{23}$ is the map induced by $Y^3\to Y^4$ sending the middle copy to the  diagonal between the second and the third copies. 
The following square is Cartesian:
\[
  \begin{tikzcd}
   G\times G/B\times\yy  \ar[d, "q'"'] & G\times\yy \ar[l,"\iota'"']\ar[d,"q''"] \\
    \yy^{(2)} & Y^3/T \ar[l,"\iota"],
  \end{tikzcd}
\]
where
\[
\iota'(g,x'U,y'U)=(g, g^{-1}y'B, x'U, y'U)
\]
and 
\[
q''(g,x'U,y'U)=(g^{-1}y'U, x'U, y'U).
\]
Indeed, restricting $q'$ to the image of $\iota$, we see that the Borel subgroup in the preimage is defined uniquely by the factors in $G\times\yy$ as $g^{-1}y'B$. Moreover, the composition $p'\circ\iota'$ is the identity map, which allows us to further rewrite
\[
p'_!q'^*\iota_*p_{12}^*p_{13,!}\Delta^*_{23}\mathcal{B}\simeq (p_{12}\circ q'')^*p_{13,!}\Delta^*_{23}\mathcal{B}.
\]
Note that 
\[
p_{12}\circ q''(g,x'U,y'U)=(g^{-1}y'U, x'U)
\]
and the following diagram is also Cartesian:
\[
  \begin{tikzcd}
    G\times\yy \ar[d,"p_{12}\circ q''"'] & G\times G/B\times\yy  \ar[d, "(p_{12}\circ q'')\times\mathrm{id}_{G/B}"] \ar[l,"p'"'] \\
    \yy & \yy\times G/B\ar[l,"p_{13}"'] .
  \end{tikzcd}
\]
Here we identify the image of the map $\Delta_{23}:Y^3/T^2 \to \yy^{(2)}$ with $\yy \times G/B$. Now we can further rewrite
\[
(p_{12}\circ q'')^*p_{13,!}\Delta^*_{23}\mathcal{B}\simeq p'_!(\Delta_{23}\circ((p_{12}\circ q'')\times\mathrm{id}_{G/B}))^*\mathcal{B}.
\]
It remains to notice that the composition $\Delta_{23}\circ((p_{12}\circ q'')\times\mathrm{id}_{G/B})$ equals to the composition of $q'\times\mathrm{id}_G\colon $ and the $G$-action map on the last two coordinates of $\yy^{(2)}$. The $G^2$-equivariance of $\mathcal{B}$ then provides the desired isomorphism. Moreover, the resulting composition is identified with the one from the constructed in Lemma \ref{sec:centr-categ-as}, proving the compatibility with the central structure. Note that both maps are coming from $G$-equivariance of $\mathcal{B}$ with respect to the action on the last two coordinates.
\end{proof}
\begin{proof}[Proof of Proposition \ref{sec:centr-categ-as-1}.]
First note that \[
\tilde{a}\circ{\chi}\circ\varepsilon(F)(\mathcal{A}) \simeq \tilde{a}(\chi(F(\Hu)) = \mathcal{A} \star_1 \mathcal{S}F(\Hu).
\]
We define a natural transformation
\begin{equation}
  \label{eq:13}
  \tau:\tilde{a}\circ{\chi}\circ\varepsilon(F) \to F \circ \tilde{a}(\Sigma)
\end{equation}
whose value on an object $\cA$ is the compostion $\tau_\mathcal{A}$ of isomorphisms
\[
  \mathcal{A} \star_1 \mathcal{S}F(\Hu) \xrightarrow{s^F_{\mathcal{S},\Hu}} \mathcal{A}\star_1 F(\hc(\Sigma)) \xrightarrow{s^F_{L(\mathcal{A}),\hc(\Sigma)}} F(\mathcal{A} \star_1 \hc(\Sigma)),
\]
where $s^F$ is the structure of an object in $\mathcal{Z}\Hecke{1}$ on $F$. To unburden the notations, let us denote left and right hand side of \eqref{eq:13} by $\Phi$ and $\Psi$, respectively. To show that $\tau: \Phi \to \Psi$ is an isomorphism of functors in $\mathcal{Z}\Hecke{1}$, we need to show that the diagram
\begin{equation}
  \label{eq:12}
  \begin{tikzcd}
    \mathcal{B}\bowtie(\mathcal{A}\star_1 \mathcal{S}F(\Hu)) \arrow[r,"s_{\mathcal{B},\mathcal{A}}^{\Phi}"] \arrow[d,"\mathcal{B}\,\bowtie\,\tau_\mathcal{A}"']                      & (\mathcal{B}\bowtie \mathcal{A})\star_1\mathcal{S}F(\Hu) \arrow[d,"\tau_{\mathcal{B}\bowtie \mathcal{A}}"] \\
\mathcal{B}\bowtie(F(\mathcal{A}\star_1\hc(\Sigma))) \arrow[r,"s_{\mathcal{B},\mathcal{A}}^{\Psi}"']     &   F((\mathcal{B}\bowtie \mathcal{A})\star_1\hc(\Sigma))                                 
  \end{tikzcd}
\end{equation}
is commutative for all $\mathcal{B} \in \Hecke{2}$. To do this, write
\[
\begin{tikzcd}
\mathcal{B}\bowtie(\mathcal{A}\star_1 \mathcal{S}F(\Hu)) \arrow[r, "s_{\mathcal{B},\mathcal{A}}^{\Phi}"] \arrow[d] \arrow[rd, phantom, "I" description]                      & (\mathcal{B}\bowtie \mathcal{A})\star_1\mathcal{S}F(\Hu) \arrow[d]                   \\
(\mathcal{B}\star_2 L(\mathcal{A})\star_2\bar{\mathcal{S}})\bowtie F(\Hu) \arrow[d,"s^F_{\mathcal{B}\star_2 L(\mathcal{A})\star_2\bar{\mathcal{S}},\Hu}"'] \arrow[r,"c"] \arrow[rd, phantom, "II" description]    & (L(\mathcal{B}\bowtie \mathcal{A})\star_2 \bar{\mathcal{S}})\bowtie F(\Hu) \arrow[d,"s^F_{L(\mathcal{B}\bowtie \mathcal{A})\star_2\bar{\mathcal{S}},\Hu}"] \\
F((\mathcal{B}\star_2 L(\mathcal{A}) \star_2 \bar{\mathcal{S}})\bowtie\Hu) \arrow[r, "c'"] \arrow[d] \arrow[rd, phantom, "III" description] & F((L(\mathcal{B}\bowtie \mathcal{A})\star_2 \bar{\mathcal{S}})\bowtie\Hu) \arrow[d] \\
F(\mathcal{B}\bowtie(\mathcal{A}\star_1 \hc(\Sigma))) \arrow[r,"F(s_{\mathcal{B},\mathcal{A}}^{\tilde{a}(\Sigma)})"'] \arrow[d, "(s_{\mathcal{B},\mathcal{A}\star_1\hc(\Sigma)}^{F})^{-1}"']                                                     & F((\mathcal{B}\bowtie \mathcal{A})\star_1\hc(\Sigma))                              \\
\mathcal{B}\bowtie(F(\mathcal{A}\star_1\hc(\Sigma))) \arrow[ru, bend right, "s_{\mathcal{B},\mathcal{A}}^{\Psi}"'] \arrow[ru, phantom, "IV" description, shift right=2]      &                                                               
\end{tikzcd}
\]
Here the unlabeled vertical morphisms are defined using the isomorphisms from Lemma \ref{sec:centr-categ-as} and $\bar{\mathcal{S}}\bowtie\F \simeq \mathcal{S}\F$. The morphism $c,c'$ are defined from the morphism of Lemma \ref{sec:proof-prop-refs} as follows:
\[
  c = c_{\mathcal{B},\mathcal{A}}\bowtie F(\Hu), c' = F(c_{\mathcal{B},\mathcal{A}}\bowtie \Hu).
\]
Diagram chase using the definition of module endofunctors shows that the compositions of vertical arrows are vertical arrows in \eqref{eq:12}. Diagrams $I$ and $III$ are commutative by Lemma \ref{sec:proof-prop-refs}, and diagrams $II$ and $IV$ by the naturality of structural isomorphisms $s$ for $F \in \mathcal{Z}\Hecke{1}$. This finishes the proof.  
\end{proof}

\section{Character sheaves as a categorical center}
\label{sec:character-sheaves-as}
\subsection{Monodromic categories.}
In this section we reprove a result of \cite{bezrukavnikovCharacterDmodulesDrinfeld2012} concerning the Drinfeld center of the abelian Hecke category, and extend it to other sheaf-theoretic situations. We also consider some applications mentioned in the introduction.

Let $A$ be a (split) torus. Let $A^\vee_\mathbb{k}$ be the dual $\mathbb{k}$-torus, i.e. algebraic torus over $\mathbb{k}$ whose character lattice
is the cocharacter lattice of $A$. In settings \ref{set:c}-\ref{set:p} let $\mathcal{C}(A)$ be the set of finite order $\mathbb{k}$-points of $A^\vee_\mathbb{k}$. In setting \ref{set:adic} let $\mathcal{C}(A)$ be the group of isomorphism classes of tame rank one local systems on $A$. Finally, in setting \ref{set:mixed} $\mathcal{C}(A) = \mathcal{C}(A \times_{\Spec\Fq} {\Spec\Fqbar})^{\Fr}$, where $\Fr$ stands for the natural Frobenius action on $\mathcal{C}(A \times_{\Spec\Fq} {\Spec\Fqbar})$. In any setting $\lambda \in \mathcal{C}(A)$ defines a rank one (tame) local system with the coefficients in $\mathbb{k}$, which we call $\mathcal{L}_\lambda$.



For a scheme $X$ with an action of $A$, $\lambda \in \mathcal{C}(A),$ let $D^b(X\mondash{\lambda} A)$ stand for the $\lambda$-monodromic subcategory of $D^b(X)$. Here by the $\lambda$-monodromic subcategory we mean the full triangulated idempotent-complete subcategory of $D^b(X)$ generated by $\mathcal{L}_\lambda$-equivariant objects. 

Note that monodrmoic categories for different $\lambda$ are orthogonal inside $D^b(X)$. For a finite subset $S \subset \mathcal{C}(A)$, let $D^b(X\mondash{S} A)$ stand for the full subcategory of $D^b(X)$ consisting of objects that are direct sums of $\lambda$-monodromic objects for some $\lambda \in S$.  We have an equivalence of categories
\[
  D^b(X\mondash{S} A) \simeq \bigoplus_{\lambda \in S} D^b(X\mondash{\lambda} A).
\]

When $X$ is equipped also with a commuting action of an algebraic group $G$, $\lambda \in \mathcal{C}(A)$ (respectively, $S \subset \mathcal{C}(A)$) we write $D^b(G\backslash X\mondash{\lambda} A)$ (respectively, $D^b(G\backslash X\mondash{S} A)$) for the full triangulated subcategory of $D^b(G\backslash X)$ consisting of objects that are in $D^b(X \mondash{\lambda} A)$ (respectively, $D^b(X \mondash{S} A)$) after forgetting the $G$-equivariance.

When $\lambda = 1, \mathcal{L}_{\lambda} \simeq \cc$, we write simply $D^b(X\rightdash A)$ instead of $D^b(X\mondash{1} A)$ and call the corresponding categories ``unipotently monodromic''.

\subsection{Monodromic Hecke categories.}
Recall that in \cite{bezrukavnikovKoszulDualityKacMoody2013} a completed unipotently monodromic Hecke category $\hat{\M} = \hat{D}^b(G\backslash ((G/U)^2\rightdash T^2))$ is defined in cases \ref{set:adic}, \ref{set:mixed}, and in other cases in \cite{bezrukavnikovTopologicalApproachSoergel2018}. The case \ref{set:dmod} is obtained by Riemann-Hilbert correspondence from \ref{set:c}, since $D$-modules that appear are regular holonomic, see \cite{ginzburgAdmissibleModulesSymmetric1989}. 

This definition was extended in the thesis of Gouttard \cite{gouttardPerverseMonodromicSheaves2021} to the case of arbitrary monodromy. We denote by $\hM{\mu}{\lambda}$ the completed monodromic Hecke category with left monodromy $\mu$ and right monodromy $\lambda$. 
Objects of the completed category are certain pro-objects in the category \[{}_{\mu}\M_{\lambda} = {D}^b(G\backslash ((G/U)^2\mondash{(\mu,\lambda)} T^2)).\]

The categories $\hM{\mu}{\lambda}$ comes with two collections of objects indexed by the elements $w \in W$ of the Weyl group, denoted by \[\DD_{w,\lambda},\NN_{w,\lambda} \in \hM{w\lambda}{\lambda},\]  called, respectively, standard and costandard free-monodromic perverse sheaves.  There is a perverse t-structure on the categories ${}_{\mu}\M_{\lambda}$ and $\hM{\mu}{\lambda}$, whose heart we denote by ${}_{\mu}\Perv_{\lambda}$ and ${}_{\mu}\cPerv_{\lambda}$, respectively.

The direct sum of categories $\hM{\mu}{\lambda}$ is equipped with the monoidal structure via the diagram \eqref{eq:7} without the quotient by the right diagonal torus action, with non-zero products of the form \[\hM{\mu}{\lambda}\times \hM{\lambda}{\nu} \to \hM{\mu}{\nu}.\] This monoidal structure extends the monoidal structure on the non-completed categories. We also denote this monoidal structure by $ \star_1' $. It is convenient to shift this monoidal structure in the monodromic case, so that the monodromic monoidal pro-unit becomes perverse, cf. Proposition \ref{braid_relations} below. We write $- \star_1^{mon} - := - \star_1'-[\dim T]$. We will keep the simplified notation $- \star_1 - $ for this shifted monoidal structure -- this should not cause any confusion. 

Standard and costandard pro-objects satisfy the following properties with respect to convolution:
\begin{proposition}
 \label{braid_relations}
  \begin{enumerate}[label=\alph*),ref=(\alph*)]
  \item\label{item:18} $\dd_{\lambda}:=\DD_{e,\lambda} \simeq \NN_{e,\lambda}$ is the unit of the monoidal structure $\star_1$ on $\hM{\lambda}{\lambda}$.
  \item\label{item:19} $\DD_{v,w\lambda} \star_1 \DD_{w,\lambda} \simeq \DD_{vw,\lambda}, \NN_{v,w\lambda} \star_1 \NN_{w,\lambda} \simeq \NN_{vw,\lambda}$ if $l(vw) = l(v) + l(w)$.
  \item\label{item:20} $\DD_{v,v^{-1}\lambda} \star_1 \NN_{v^{-1},\lambda} \simeq \dd_{\lambda}.$
  \end{enumerate}
\end{proposition}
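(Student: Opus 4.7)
The plan is to reduce all three parts to standard facts about $!$- and $*$-extensions from Bruhat cells, and in the genuinely nontrivial case (c) to a local computation in rank one. Throughout, I would lean on the constructions of $\DD_{w,\lambda}$ and $\NN_{w,\lambda}$ in \cite{bezrukavnikovKoszulDualityKacMoody2013} and \cite{gouttardPerverseMonodromicSheaves2021}: these are free-monodromic pro-perverse sheaves obtained by completing the $!$-extension (resp.\ $*$-extension) of a shifted rank-one local system from the Bruhat cell indexed by $w$ inside $G\backslash(G/U)^2/T$, adjusted by the homological shift corresponding to the dimension of the cell and by $\dim T$. The shift in the convention $\star_1 = \star_1'[\dim T]$ is set up precisely so that $\dd_\lambda$ becomes perverse.

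For part \ref{item:18}, I would compute $\dd_\lambda \star_1 \F$ using the convolution diagram \eqref{eq:7}. Since $\dd_\lambda$ is supported on the diagonal $B$-orbit in $(G/U)^2/T$, the relevant fiber product in $Y^3/T$ is the image of the map $Y^2/T \to Y^3/T, \ (x_1U,x_3U) \mapsto (x_1U,x_1U,x_3U)$. On this locus, the projection $p_{13}$ becomes essentially the identity, and once one keeps track of the free-monodromic structure carried by $\dd_\lambda$ (which contributes the pro-unipotent completion along $T$), the output of $!$-pushforward is naturally $\F[-\dim T]$; combined with the $[\dim T]$ built into $\star_1$, this yields $\dd_\lambda \star_1 \F \simeq \F$, with the analogous argument on the right.

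Part \ref{item:19} is the classical ``smooth product'' fact. Under the assumption $\ell(vw) = \ell(v)+\ell(w)$, the multiplication map $BvB \times^B BwB \to BvwB$ is an isomorphism. Translating through the convolution diagram, the fiber product over $Y^3/T$ parameterizing the $!$-supports of $\DD_{v,w\lambda}$ and $\DD_{w,\lambda}$ maps isomorphically onto the Bruhat cell for $vw$, so base change plus proper base change for $!$-extensions give the first isomorphism; the dual argument with $*$-extensions gives the second. The dimensions match by length-additivity, and the monodromic structure on the right factor propagates to the full product because $w$ conjugates $\lambda$ to $w\lambda$ on the nose.

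Part \ref{item:20} is the main point. Using \ref{item:19} reduces it inductively to the case $v = s$ a simple reflection: if $v = s_{i_1}\cdots s_{i_k}$ is a reduced expression, then $\DD_{v,v^{-1}\lambda}$ and $\NN_{v^{-1},\lambda}$ factor as $\star_1$-products of factors attached to simple reflections, and the general identity follows by a pairwise cancellation. In the rank-one case, the computation takes place over $P_s/B \simeq \mathbb{P}^1$: one computes $\DD_{s,s\lambda} \star_1 \NN_{s,\lambda}$ explicitly as the $!$-pushforward of a free-monodromic extension-by-zero times a free-monodromic $*$-extension along the two charts of $\mathbb{P}^1$, and verifies that the result is the free-monodromic skyscraper at the base point, i.e.\ $\dd_\lambda$. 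The main obstacle is precisely this bookkeeping of the monodromic completion: one must check that the pro-unipotent monodromies produced on the right from $\NN$ and on the left from $\DD$ combine to exactly the pro-unipotent monodromy defining $\dd_\lambda$, with no residual shift. In all sheaf-theoretic settings \ref{set:c}--\ref{set:mixed} this ultimately reduces to the same $\mathbb{P}^1$ computation, which is carried out in \cite{bezrukavnikovKoszulDualityKacMoody2013} for $\lambda = 1$ and in \cite{gouttardPerverseMonodromicSheaves2021} in general, so the cleanest write-up is to cite those references for the rank-one base case and spell out only the inductive reduction.
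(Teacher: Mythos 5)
Your proposal is correct in outline, but note that the paper does not reprove these facts at all: its ``proof'' is a pure citation, namely \cite[Lemma 4.3.3]{bezrukavnikovKoszulDualityKacMoody2013} and \cite[Lemmas 6.7, 7.7]{bezrukavnikovTopologicalApproachSoergel2018} for the unipotent case (the latter needed in setting \ref{set:p}), and \cite[Lemmas 8.4.1, 8.4.3]{gouttardPerverseMonodromicSheaves2021} for general monodromy. What you write is essentially a reconstruction of the standard arguments inside those references: (a) is the statement that pushing the completed free-monodromic local system along the $T$-direction of the diagonal stratum returns the input up to the shift $[\dim T]$ absorbed into $\star_1$; (b) is the isomorphism $BvB\times^B BwB\simeq BvwB$ for length-additive products applied to $!$- and $*$-extensions; and (c) is the telescoping reduction to a simple reflection plus the rank-one $\mathbb{P}^1$ computation, which you again delegate to \cite{bezrukavnikovKoszulDualityKacMoody2013} and \cite{gouttardPerverseMonodromicSheaves2021}. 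So the two routes end in the same place; your version buys a bit of transparency about where the geometric content sits, while the paper's citation also covers the points your sketch treats informally --- the pro-object bookkeeping in (a) and (b) (convolution of completed objects, passage to limits, equivariance/monodromy matching) is precisely what the cited lemmas verify, and for setting \ref{set:p} you should add the reference \cite{bezrukavnikovTopologicalApproachSoergel2018}, since the characteristic-$\ell$ coefficient case is not covered by the sources you name.
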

\begin{proof} In unipotent case,  \ref{item:18} and \ref{item:19} are \cite[Lemma 4.3.3]{bezrukavnikovKoszulDualityKacMoody2013}, and \cite[Lemma 6.7]{bezrukavnikovTopologicalApproachSoergel2018} in the setting \ref{set:p}, and \ref{item:20} is  \cite[Lemma 7.7]{bezrukavnikovTopologicalApproachSoergel2018}. For general monodromy, see \cite[Lemmas 8.4.1, 8.4.3]{gouttardPerverseMonodromicSheaves2021}.
\end{proof}
\subsection{Equivariant monodromic categories and character sheaves.}
Let
\[\Hecke{1}_{\lambda} = D^b(G \backslash \yy \mondash{(\lambda,\lambda)}T^{2})\]
be the full $\lambda$-monodromic subcategory of $\Hecke{1}$, and let
\[\Hecke{2}_{\mu,\lambda}=D^b(G^2 \backslash \yy^{(2)}\mondash{(\lambda,\mu,\mu,\lambda)}T^4)\]
be the full $(\lambda, \mu)$-monodromic subcategory of $\Hecke{2}$, with monodromy $\lambda$ along the factors 1, 4 and monodromy $\mu$ along the factors 2, 3.  Note that $\Hecke{1}_{\lambda}$ is an equivariant version of the category ${}_{\lambda}\M_{\lambda}$. For a $W$-orbit $\mathfrak{o}$ in $\mathcal{C}(T)$, we write $\Hecke{1}_{\mathfrak{o}},\Hecke{2}_{\mathfrak{o}}$ for the full subcategories of monodromic sheaves in $\Hecke{1},\Hecke{2}$ with monodromies in $\mathfrak{o}$. We have equivalences
\[
\Hecke{1}_{\mathfrak{o}} \simeq \bigoplus_{\lambda \in \mathfrak{o}}\Hecke{1}_{\lambda},
\]
\[
\Hecke{2}_{\mathfrak{o}} \simeq \bigoplus_{\lambda,\mu \in \mathfrak{o}}\Hecke{2}_{\lambda,\mu}.
\]
Let $D^b_{\mathfrak{C}_{\mathfrak{o}}}(G) \subset D^b(G/_{\Ad}G)$ be the full triangulated subcategory with objects $\F$ satisfying $\hc(\F) \in \Hecke{1}_{\mathfrak{o}}$. Since the functor $\hc$ is monoidal and 
$\Hecke{1}_{\mathfrak{o}}$ is closed under convolution, we conclude that 
$D_{\mathfrak{C}_{\mathfrak{o}}}^b(G)$ is closed under convolution.  The triangulated category $D^b_{\mathfrak{C}_{\mathfrak{o}}}(G)$ is known as the derived category of character sheaves with semisimple parameter $\mathfrak{o}$, and its abelian heart with respect to the perverse t-structure is the category of character sheaves with semisimple parameter $\mathfrak{o}$ (see \cite{lusztigCharacterSheaves1985}, \cite{mirkovicCharacteristicVarietiesCharacter1988}). Let $\mathfrak{C}_{\mathfrak{o}}$ stand for the abelian subcategory of perverse objects in $D^b_{\mathfrak{C}_{o}}(G)$.

We shift the monoidal structure $\star_2$ and action $\bowtie$ by a homological shift $[2\dim T]$, and the monoidal structure $\star$ by $[\dim T]$ (cf. the discussion before Proposition \ref{braid_relations}), keeping the notations the same.

The pro-unit $\dd_\lambda$ of $\hM{\lambda}{\lambda}$ is perverse and $T$-equivariant, so defines the pro-unit in $\Hecke{1}_{\lambda}$, which we denote in the same way.  For a $W$-orbit $\mathfrak{o}$ in $\mathcal{C}(T)$ we write $\dd_{\mathfrak{o}} = \oplus_{\lambda \in \mathfrak{o}}\dd_{\lambda}$. $\dd_{\mathfrak{o}}$ is the monoidal pro-unit in $\Hecke{1}_{\mathfrak{o}}$. 
\begin{definition}
  \label{sec:monodr-categ-char-7}
  For $\lambda \in \mathcal{C}(T)$, write \[\mathcal{Z}\Hecke{1}_{\lambda}:=\operatorname{Fun}_{\Hecke{2}_{\lambda,\lambda}}^{fd}(\Hecke{1}_{\lambda},\Hecke{1}_{\lambda}),\] where \[\operatorname{Fun}_{\Hecke{2}_{\lambda,\lambda}}^{fd}(\Hecke{1}_{\lambda},\Hecke{1}_{\lambda})\] stands for the full subcategory of  $\operatorname{Fun}_{\Hecke{2}_{\lambda,\lambda}}(\Hecke{1}_{\lambda},\Hecke{1}_{\lambda})$ consisting of functors $F$ such that the limit $F(\dd_\lambda)$ exists in $\Hecke{1}_{\lambda}$. 

  For a $W$-orbit $\mathfrak{o}$ in $\mathcal{C}(T)$, write 
  \[\mathcal{Z}\Hecke{1}_{\mathfrak{o}}:=\operatorname{Fun}_{\Hecke{2}_{\mathfrak{o}}}^{fd}(\Hecke{1}_{\mathfrak{o}},\Hecke{1}_{\mathfrak{o}}),\] where \[\operatorname{Fun}_{\Hecke{2}_{\mathfrak{o}}}^{fd}(\Hecke{1}_{\mathfrak{o}},\Hecke{1}_{\mathfrak{o}})\] stands for the full subcategory of  $\operatorname{Fun}_{\Hecke{2}_{\mathfrak{o}}}(\Hecke{1}_{\mathfrak{o}},\Hecke{1}_{\mathfrak{o}})$ consisting of functors $F$ such that the limit $F(\dd_\mathfrak{o})$ exists in $\Hecke{1}_{\mathfrak{o}}$. 
\end{definition}
We have the following variant of Theorem \ref{sec:hecke-categ-char}. Recall that semigroupal category is the category with product satisfying all the axioms of the monoidal category, except those involving the monoidal unit.
\begin{theorem}
  \label{sec:monodr-categ-char}
 Let $\mathfrak{o} = W\lambda$ for $\lambda \in \mathcal{C}(T)$. In all cases \ref{set:c}-\ref{set:mixed}, the functor $\tilde{a}$ induces an equivalence of semigroupal categories 
 \[\tilde{a}_{\mathfrak{C}_{\mathfrak{o}}}:D^b_{\mathfrak{C}_{\mathfrak{o}}}(G) \to \mathcal{Z}\Hecke{1}_{\lambda}.\]
\end{theorem}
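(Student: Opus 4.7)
The plan is to deduce Theorem~\ref{sec:monodr-categ-char} from the non-monodromic Theorem~\ref{sec:hecke-categ-char} in two steps: first restrict both sides of $\tilde{a}$ to the $\mathfrak{o}$-monodromic block, then perform a Morita-type reduction within $\mathfrak{o}$ replacing the entire block $\Hecke{1}_\mathfrak{o}$ by its single component $\Hecke{1}_\lambda$.

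First I would record the orthogonality of monodromic summands of $\Hecke{1}$ and $\Hecke{2}$: the convolutions $\star_1,\star_2$ and the action $\bowtie$ annihilate products with mismatched ``middle'' monodromy parameters, so $\tilde{a}$ decomposes (as a semigroupal functor) along $W$-orbits. Combined with $\hc(D^b_{\mathfrak{C}_{\mathfrak{o}}}(G))\subset\Hecke{1}_\mathfrak{o}$, this implies that $\tilde{a}$ restricts to a semigroupal equivalence
\[
  D^b_{\mathfrak{C}_{\mathfrak{o}}}(G)\rsim\cZ\Hecke{1}_\mathfrak{o},
\]
with $\cZ\Hecke{1}_\mathfrak{o}$ as in Definition~\ref{sec:monodr-categ-char-7}. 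The ``fd'' condition is automatic on this side: for $\F\in D^b_{\mathfrak{C}_{\mathfrak{o}}}(G)$ one has $\tilde{a}(\F)(\dd_\mathfrak{o})\simeq\dd_\mathfrak{o}\star_1\hc(\F)$, which is a genuine object of $\Hecke{1}_\mathfrak{o}$ by boundedness of $\hc(\F)$.

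The second step is to produce a semigroupal equivalence $\cZ\Hecke{1}_\mathfrak{o}\rsim\cZ\Hecke{1}_\lambda$ given by the restriction $F\mapsto F|_{\Hecke{1}_\lambda}$. A quasi-inverse is constructed using parts \ref{item:19} and \ref{item:20} of Proposition~\ref{braid_relations}: convolution with the monodromic standards $\DD_{w,\lambda}$ and costandards $\NN_{w^{-1},w\lambda}$ yields mutually quasi-inverse equivalences $\Hecke{1}_\lambda \rightleftarrows \Hecke{1}_{w\lambda}$, allowing any $\Hecke{2}_{\lambda,\lambda}$-module endofunctor of $\Hecke{1}_\lambda$ to be propagated uniquely to a $\Hecke{2}_{\mathfrak{o},\mathfrak{o}}$-module endofunctor of $\Hecke{1}_\mathfrak{o}$. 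Full faithfulness of restriction follows as in the proof of Proposition~\ref{sec:sheaves-group-as-1}\ref{item:11} via a monodromic analogue of Lemma~\ref{sec:centr-categ-as}: a natural transformation between module endofunctors is determined by its value at the pro-unit $\dd_\lambda$, which generates $\Hecke{1}_\lambda$ as a $\Hecke{2}_{\lambda,\lambda}$-module under $\bowtie$.

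I expect the main obstacle to be verifying canonicity and semigroupal compatibility of the ``spreading'' procedure used in the quasi-inverse construction: one must promote the non-equivariant monodromic standards $\DD_{w,\lambda}\in\hM{w\lambda}{\lambda}$ to $G^2$-equivariant invertible kernels implementing the bimodule equivalence $\Hecke{1}_\lambda \rightleftarrows \Hecke{1}_{w\lambda}$, check their compatibility with the two-sided action $\bowtie$, and confirm consistency along the braid relations at the level of propagated functors. All of this should follow from the completed-monodromic formalism of \cite{bezrukavnikovKoszulDualityKacMoody2013,gouttardPerverseMonodromicSheaves2021} together with Proposition~\ref{braid_relations}.
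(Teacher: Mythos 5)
Your overall architecture (restrict to the $\mathfrak{o}$-block, then reduce from $\mathfrak{o}$ to the single parameter $\lambda$) is the same as the paper's, and your second step is essentially the paper's Proposition \ref{sec:equiv-monodr-categ-1}: the paper builds $G^2$-equivariant kernels $\Xi^w_{\lambda,\mu}=\pi_{23!}(\DD_{w,\lambda}\boxtimes\DD_{w^{-1},\mu})$ for $w$ a \emph{minimal} element of $W_{\mu,\lambda}$, using the isomorphism $\DD_{w,\lambda}\simeq\NN_{w,\lambda}$ for minimal $w$ to see that $\Xi^w_{\lambda,\mu}\bowtie-$ is invertible, and uses the central structure on $F$ to check independence of choices; you flag exactly this verification as the obstacle, though you should be aware that the braid relations alone (Proposition \ref{braid_relations}\ref{item:19},\ref{item:20}) control one-sided convolution and the minimal-element input is what makes the two-sided kernel invertible.

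The genuine gap is in your first step. You claim that orthogonality of monodromic summands makes $\tilde{a}$ ``decompose along $W$-orbits'' and hence restrict to an equivalence $D^b_{\mathfrak{C}_{\mathfrak{o}}}(G)\rsim\cZ\Hecke{1}_{\mathfrak{o}}$. But $\Hecke{1}$ is not a direct sum of monodromic blocks, and, more importantly, the two sides of this comparison carry different amounts of structure: an object of $\cZ\Hecke{1}$ with $\varepsilon(F)\in\Hecke{1}_{\mathfrak{o}}$ is a module endofunctor over \emph{all} of $\Hecke{2}$, whereas an object of $\cZ\Hecke{1}_{\mathfrak{o}}$ (Definition \ref{sec:monodr-categ-char-7}) only carries compatibility with the monodromic kernels $\Hecke{2}_{\mathfrak{o}}$. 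Orthogonality gives you the forgetful functor $(\cZ\Hecke{1})_{\mathfrak{o}}\to\cZ\Hecke{1}_{\mathfrak{o}}$, but not that it is an equivalence: you must show that a central structure over $\Hecke{2}_{\mathfrak{o}}$ extends, uniquely and compatibly, to a central structure over all of $\Hecke{2}$. This is where the paper does real work: first, any $F_{\mathfrak{o}}\in\cZ\Hecke{1}_{\mathfrak{o}}$ satisfies $F_{\mathfrak{o}}(\cA)\simeq\cA\star_1 F_{\mathfrak{o}}(\dd_{\mathfrak{o}})$, which uses the ``fd'' condition together with the fact that in the monodromic category $*$- and $!$-convolution agree up to shift, so convolution preserves the relevant limits; second, Proposition \ref{sec:monodr-categ-char-5} shows that a functor of the form $-\star_1\mathfrak{Z}$ with a $\cZ\Hecke{1}_{\mathfrak{o}}$-structure acquires a canonical $\cZ\Hecke{1}$-structure, proved by the adjoint trick of Lemma \ref{sec:equiv-monodr-categ-2}: the right adjoints of $\cB\mapsto\cB\bowtie(\cA\star_1\mathfrak{Z})$ and $\cB\mapsto(\cB\bowtie\cA)\star_1\mathfrak{Z}$ both take values in the monodromic subcategory $\Hecke{2}_{\mathfrak{o}}$, so an isomorphism of their restrictions to $\Hecke{2}_{\mathfrak{o}}$ propagates to all of $\Hecke{2}$. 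Without this extension argument your step 1 does not go through, and it is the key idea missing from your proposal.
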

\begin{bremark}  
  Note that it is not obvious that the category on the right is closed under the composition. This will become apparent after the proof.
\end{bremark}
The Theorem \ref{sec:monodr-categ-char} evidently follows from  Theorem \ref{sec:equiv-monodr-categ} and Proposition \ref{sec:equiv-monodr-categ-1} below:
\begin{theorem}
  \label{sec:equiv-monodr-categ}
 Let $\mathfrak{o} = W\lambda$ for $\lambda \in \mathcal{C}(T)$. In all cases \ref{set:c}-\ref{set:mixed}, the functor $\tilde{a}$ induces an equivalence of semigroupal categories 
 \[\tilde{a}_{\mathfrak{C}_{\mathfrak{o}}}:D^b_{\mathfrak{C}_{\mathfrak{o}}}(G) \to \mathcal{Z}\Hecke{1}_{\mathfrak{o}}.\]
\end{theorem}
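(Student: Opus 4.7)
The plan is to deduce Theorem \ref{sec:equiv-monodr-categ} from Theorem \ref{sec:hecke-categ-char} by establishing compatible block decompositions, indexed by the set of $W$-orbits $\mathfrak{o}$ in $\mathcal{C}(T)$, of each of the categories involved, and then restricting the equivalence $\tilde{a}$ block-by-block. The input from the theory of monodromic Hecke categories is the orthogonality of monodromic blocks:
\[
\Hecke{1} = \bigoplus_{\mathfrak{o}} \Hecke{1}_{\mathfrak{o}}, \qquad \Hecke{2} = \bigoplus_{\mathfrak{o}} \Hecke{2}_{\mathfrak{o}},
\]
with $\Hecke{2}_{\mu,\lambda} = 0$ unless $\mu$ and $\lambda$ lie in the same $W$-orbit, together with the block-compatibility of the structures $\star_1$, $\star_2$, $\bowtie$---in particular, $\Hecke{2}_{\mathfrak{o}} \bowtie \Hecke{1}_{\mathfrak{o}'} = 0$ for $\mathfrak{o} \neq \mathfrak{o}'$.

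On the source side, the key step is to verify that the coalgebra $\bar{\mathcal{S}} \in \Hecke{2}$ constructed in Proposition \ref{sec:spring-comon-bimod} has no cross-orbit components. This is a direct geometric verification on the map $\alpha: G \times G/B \times G/B \to \yy^{(2)}$ defining $\bar{\mathcal{S}}$: the residual torus monodromies on the four factors of $\yy^{(2)}$, restricted to the image of $\alpha$, are correlated so that the $(1,4)$- and $(2,3)$-monodromies end up in the same $W$-orbit, giving $\bar{\mathcal{S}} \in \bigoplus_{\mathfrak{o}} \Hecke{2}_{\mathfrak{o}}$. Consequently the Springer comonad $\mathcal{S}$ preserves each $\Hecke{1}_{\mathfrak{o}}$, and the equivalence of Corollary \ref{sec:springer-comonad} refines to a block decomposition $D^b(G/_{\Ad}G) = \bigoplus_{\mathfrak{o}} D^b_{\mathfrak{C}_{\mathfrak{o}}}(G)$, with the Harish-Chandra functor matching each summand to $\comod{\mathcal{S}}{\Hecke{1}_{\mathfrak{o}}}$.

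On the target side, any $F \in \mathcal{Z}\Hecke{1}$ preserves the block decomposition of $\Hecke{1}$: by Lemma \ref{sec:centr-categ-as}, the projector $\mathcal{A} \mapsto \dd_{\mathfrak{o}} \star_1 \mathcal{A}$ onto $\Hecke{1}_{\mathfrak{o}}$ is implemented by the action of $L(\dd_{\mathfrak{o}}) \in \Hecke{2}_{\mathfrak{o}}$, with which $F$ must commute by module functoriality, giving $F = \bigoplus_{\mathfrak{o}} F_{\mathfrak{o}}$ with $F_{\mathfrak{o}} \in \operatorname{Fun}_{\Hecke{2}_{\mathfrak{o}}}(\Hecke{1}_{\mathfrak{o}},\Hecke{1}_{\mathfrak{o}})$; conversely, any family $(F_{\mathfrak{o}})$ glues to a module functor over $\Hecke{2}$ via extension by zero across orbits, using block-compatibility of $\bowtie$. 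Matching the two decompositions: for $\mathcal{F}_{\mathfrak{o}} \in D^b_{\mathfrak{C}_{\mathfrak{o}}}(G)$ and $\mathcal{A} \in \Hecke{1}_{\mathfrak{o}}$, $\tilde{a}(\mathcal{F}_{\mathfrak{o}})(\mathcal{A}) = \mathcal{A} \star_1 \hc(\mathcal{F}_{\mathfrak{o}}) \in \Hecke{1}_{\mathfrak{o}}$, and the finite-dimensionality condition in the definition of $\mathcal{Z}\Hecke{1}_{\mathfrak{o}}$ is automatic since $\tilde{a}(\mathcal{F}_{\mathfrak{o}})(\dd_{\mathfrak{o}}) \simeq \dd_{\mathfrak{o}} \star_1 \hc(\mathcal{F}_{\mathfrak{o}}) \simeq \hc(\mathcal{F}_{\mathfrak{o}})$ is a genuine object of $\Hecke{1}_{\mathfrak{o}}$. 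Conversely, any $F \in \mathcal{Z}\Hecke{1}_{\mathfrak{o}}$ lifts to $\mathcal{Z}\Hecke{1}$ by extension by zero, and by Theorem \ref{sec:hecke-categ-char} applied to this lift, together with the block decomposition of $D^b(G/_{\Ad}G)$, the faithfulness of $\tilde{a}$ forces the preimage to live in $D^b_{\mathfrak{C}_{\mathfrak{o}}}(G)$, yielding essential surjectivity and the desired semigroupal equivalence.

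I expect the main technical obstacle to be the block-diagonality of $\bar{\mathcal{S}}$, which requires careful tracking of the residual torus monodromies on $\yy^{(2)}$ along the image of $\alpha$, together with the related subtlety of interpreting convolution with the pro-unit $\dd_{\mathfrak{o}}$ correctly in the completed monodromic framework when verifying the finite-dimensionality condition.
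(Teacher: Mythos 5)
Your plan rests on a block decomposition
\[
\Hecke{1} \;=\; \bigoplus_{\mathfrak{o}} \Hecke{1}_{\mathfrak{o}},\qquad
\Hecke{2} \;=\; \bigoplus_{\mathfrak{o}} \Hecke{2}_{\mathfrak{o}},\qquad
D^b(G/_{\Ad}G) \;=\; \bigoplus_{\mathfrak{o}} D^b_{\mathfrak{C}_{\mathfrak{o}}}(G),
\]
and all three of these are false. The monodromic subcategories for distinct orbits are indeed pairwise orthogonal, and the paper uses the internal decomposition $\Hecke{1}_{\mathfrak{o}} \simeq \bigoplus_{\lambda\in\mathfrak{o}}\Hecke{1}_{\lambda}$, but the $\Hecke{1}_{\mathfrak{o}}$ together do \emph{not} exhaust $\Hecke{1}$: not every $G$-equivariant complex on $\yy$ is (a direct sum of) $T$-monodromic objects. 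Already for $G = SL_2$ and the open Bruhat cell $T\subset U\backslash G/U$, a skyscraper sheaf on $T$ is $G$-equivariant after inducing up, but it is not monodromic for any $\lambda$. The same phenomenon makes $D^b(G/_{\Ad}G)$ strictly larger than the sum of the character-sheaf blocks, and makes the extension-by-zero step (gluing a family $(F_{\mathfrak{o}})$ to a module functor over all of $\Hecke{2}$) collapse: there is a large non-monodromic part of $\Hecke{2}$ whose action must be specified and whose compatibility must be checked, and extension by zero says nothing about it. A related problem is that $\dd_{\mathfrak{o}}$ is a \emph{pro}-object, so $L(\dd_{\mathfrak{o}})$ is a pro-object of $\Hecke{2}$ rather than an object of $\Hecke{2}_{\mathfrak{o}}$, and the claim that $F$ commutes with the ``projector'' needs more care than module-functoriality with respect to $\Hecke{2}$ alone.

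The paper avoids the decomposition entirely. It first introduces the full subcategory $(\mathcal{Z}\Hecke{1})_{\mathfrak{o}}\subset\mathcal{Z}\Hecke{1}$ of those $F$ with $\varepsilon(F)\in\Hecke{1}_{\mathfrak{o}}$, and deduces from Theorem \ref{sec:hecke-categ-char} and Proposition \ref{sec:sheaves-group-as-1} that $\tilde a$ restricts to an equivalence $D^b_{\mathfrak{C}_{\mathfrak{o}}}(G)\rsim(\mathcal{Z}\Hecke{1})_{\mathfrak{o}}$. The real work is then to identify $(\mathcal{Z}\Hecke{1})_{\mathfrak{o}}$ with $\mathcal{Z}\Hecke{1}_{\mathfrak{o}}$. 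The forgetful direction is easy; for the other direction, one observes that $F_{\mathfrak{o}}(\cA)\simeq\cA\star_1 F_{\mathfrak{o}}(\dd_{\mathfrak{o}})$ (using that in the completed monodromic category $\cA\star_1-$ preserves limits because $!$- and $*$-convolution agree up to shift), and then invokes Proposition \ref{sec:monodr-categ-char-5}: the module-functor structure on $-\star_1\mathfrak{Z}$ with respect to $\Hecke{2}_{\mathfrak{o}}$ extends uniquely to one with respect to all of $\Hecke{2}$. That extension is established via the adjunction trick of Lemma \ref{sec:equiv-monodr-categ-2} (comparing functors by showing their right adjoints land in $\Hecke{2}_{\mathfrak{o}}$), not by extension by zero. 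You would need to replace your decomposition-based gluing with an argument of this type; the observation about $\bar{\mathcal{S}}$ being block-diagonal is a correct and relevant ingredient (it is what makes $\chi\circ\hc$ preserve the monodromic subcategories and underlies the compatibility in step one), but it is far from sufficient to carry the whole proof.
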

\begin{proposition}
  \label{sec:equiv-monodr-categ-1}
 Let $\mathfrak{o} = W\lambda$ for $\lambda \in \mathcal{C}(T)$. The projection \[r_\lambda:\mathcal{Z}\Hecke{1}_{\mathfrak{o}} \to \mathcal{Z}\Hecke{1}_{\mathfrak{\lambda}}\] is a monoidal equivalence. 
\end{proposition}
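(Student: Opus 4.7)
The plan is to show that $r_\lambda$ is a Morita-style equivalence, using invertible ``transport'' objects in the monodromic 2-category $\Hecke{2}_\mathfrak{o}$ to identify $\mathcal{Z}\Hecke{1}_\mathfrak{o}$ with $\mathcal{Z}\Hecke{1}_\lambda$. The first step is to observe that every $F \in \mathcal{Z}\Hecke{1}_\mathfrak{o}$ preserves the block decomposition $\Hecke{1}_\mathfrak{o} = \bigoplus_{\mu \in \mathfrak{o}} \Hecke{1}_\mu$: for $\mathcal{B} \in \Hecke{2}_{\mu,\nu}$ and $\mathcal{A} \in \Hecke{1}_\nu$ the object $\mathcal{B} \bowtie \mathcal{A}$ lies in $\Hecke{1}_\mu$ while $\mathcal{B}$ sees only the $\nu$-component of $F(\mathcal{A})$, so the central isomorphism $\mathcal{B} \bowtie F(\mathcal{A}) \simeq F(\mathcal{B} \bowtie \mathcal{A})$, combined with essential surjectivity of $\bowtie$-action by the transport objects constructed next, forces $F(\Hecke{1}_\mu) \subset \Hecke{1}_\mu$. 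Hence $F = \bigoplus_\mu F_\mu$, $r_\lambda(F) = F_\lambda$, and the $fd$-condition restricts and extends componentwise.

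The second step is to produce, for each $w \in W$, a $G^2$-equivariant, two-sided monodromic analogue $T_{w,\lambda} \in \Hecke{2}_{w\lambda,\lambda}$ of the free-monodromic costandard pro-object $\bar{\NN}_{w,\lambda}$, satisfying $T_{w,\lambda} \star_2 T_{w^{-1}, w\lambda} \simeq \dd_{w\lambda}$ (the 2-categorical analogue of Proposition \ref{braid_relations}\,\ref{item:20}) and the braid-type composition $T_{v,w\lambda} \star_2 T_{w,\lambda} \simeq T_{vw,\lambda}$ when $\ell(vw) = \ell(v) + \ell(w)$ (analogue of \ref{item:19}). Then $T_{w,\lambda} \bowtie - \colon \Hecke{1}_\lambda \rsim \Hecke{1}_{w\lambda}$ is an equivalence, and the central structure of $F$ yields $T_{w,\lambda} \bowtie F_\lambda(\mathcal{A}) \simeq F_{w\lambda}(T_{w,\lambda} \bowtie \mathcal{A})$, forcing $F_{w\lambda}$ to be the conjugate of $F_\lambda$ by $T_{w,\lambda} \bowtie -$. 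An analogous argument on natural transformations shows that $\tau \in \mathcal{Z}\Hecke{1}_\mathfrak{o}$ is determined by its restriction $\tau_\lambda$, giving faithfulness of $r_\lambda$. For the inverse, given $F_\lambda \in \mathcal{Z}\Hecke{1}_\lambda$ and representatives $w_\mu \in W$ with $w_\mu \lambda = \mu$, set
\[
F_\mu(\mathcal{A}) := T_{w_\mu, \lambda} \bowtie F_\lambda\bigl(T_{w_\mu^{-1}, \mu} \bowtie \mathcal{A}\bigr), \qquad F := \bigoplus_\mu F_\mu,
\]
and equip $F$ with a central structure by transporting any $\mathcal{B} \in \Hecke{2}_{\mu,\nu}$ to $T_{w_\mu^{-1}, \mu} \star_2 \mathcal{B} \star_2 T_{w_\nu, \lambda} \in \Hecke{2}_{\lambda,\lambda}$ and invoking the $\Hecke{2}_{\lambda,\lambda}$-linearity of $F_\lambda$. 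Independence of the choice of $w_\mu$ is exactly the $\operatorname{Stab}_W(\lambda)$-equivariance packaged into the centrality of $F_\lambda$: replacing $w_\mu$ by $w_\mu s$ with $s \in \operatorname{Stab}_W(\lambda)$ changes the construction by conjugation with $T_{s,\lambda} \in \Hecke{2}_{\lambda,\lambda}$, and the central structure of $F_\lambda$ supplies the canonical identification.

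The main obstacle is the coherence check: verifying that the assembled central structure on $F$ satisfies all compatibility diagrams of the form \eqref{eq:9}, by combining the braid-type composition laws of the $T_{w,\lambda}$'s with the pentagon compatibilities of $F_\lambda$'s central structure, and similarly for the action on morphisms. Once this is carried out, semigroupality of $r_\lambda$ is immediate since the monoidal product on $\mathcal{Z}\Hecke{1}$ is composition of functors and restriction to a direct summand commutes with composition; semigroupality of the inverse then follows from the universal characterization. Conceptually, the proof establishes that the pair $(\Hecke{2}_\mathfrak{o}, \Hecke{1}_\mathfrak{o})$ is Morita-equivalent, as a monoidal category with module category, to $(\Hecke{2}_{\lambda,\lambda}, \Hecke{1}_\lambda)$ via the invertible bimodule objects $\{T_{w,\lambda}\}_{w \in W}$, from which the equivalence of centralizer categories is automatic.
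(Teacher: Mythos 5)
Your overall strategy -- transporting a functor in $\mathcal{Z}\Hecke{1}_{\lambda}$ across the blocks of $\Hecke{1}_{\mathfrak{o}}$ by conjugating with invertible objects of $\Hecke{2}$ attached to Weyl group elements carrying $\lambda$ to $\mu$, and using centrality to make the result independent of choices -- is the same strategy the paper uses. But there is a genuine gap in the input you assume. You posit, for \emph{every} $w \in W$, an object $T_{w,\lambda} \in \Hecke{2}_{w\lambda,\lambda}$ with $T_{w,\lambda}\star_2 T_{w^{-1},w\lambda}\simeq \dd_{w\lambda}$ and braid-type composition. No such objects exist in general: by Proposition \ref{braid_relations}\ref{item:20} the inverse of convolution with a (co)standard pro-object is convolution with the pro-object of the \emph{other} type, and $\NN_{w,\lambda}\star_1\NN_{w^{-1},w\lambda}$ (or $\DD_w\star_1\DD_{w^{-1}}$) is not the unit unless the canonical map $\DD_{w,\lambda}\to\NN_{w,\lambda}$ is an isomorphism. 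That is exactly the condition singled out in the paper: it holds precisely when $w$ is a \emph{minimal} element of $W_{\mu,\lambda}$ in Lusztig's sense (e.g.\ for trivial $\lambda$ only $w=e$ qualifies), and minimality is preserved under composition, which is what replaces your blanket braid relations. Your well-definedness argument inherits the same problem: replacing $w_\mu$ by $w_\mu s$ with $s\in\operatorname{Stab}_W(\lambda)$ would require an invertible $T_{s,\lambda}$ for an arbitrary stabilizer element, which again fails for non-minimal $s$; the paper instead gets independence of the choice of minimal elements directly from the central structure of $F$.

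A second, smaller omission is that you never construct the transport objects, and this is not automatic: the free-monodromic (co)standards are pro-objects of the completed category $\hM{w\lambda}{\lambda}$, while the definition of $\mathcal{Z}\Hecke{1}_{\lambda}$ requires acting by honest objects of $\Hecke{2}_{\lambda,\mu}$. The paper resolves this by setting $\bar{\Xi}^w_{\lambda,\mu}=\pi_{23!}(\DD_{w,\lambda}\boxtimes\DD_{w^{-1},\mu})$ for $w$ minimal: the pushforward along the torus quotient on factors $2,3$ produces a perverse, $T$-equivariant object which therefore descends to a genuine object $\Xi^w_{\lambda,\mu}\in\Hecke{2}_{\lambda,\mu}$, and $\Xi^w_{\lambda,\mu}\bowtie-$ is then an equivalence $\Hecke{1}_{\lambda}\to\Hecke{1}_{\mu}$ because standard and costandard coincide for minimal $w$. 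If you replace your $T_{w,\lambda}$ (for all $w$) by these $\Xi^w$ (for minimal $w$ only), and derive the composition and independence statements from Lusztig's properties of minimal elements plus centrality, your argument becomes essentially the paper's proof; the block-decomposition observation and the coherence check you flag are then routine, but as written the key hypotheses your construction rests on are false.
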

\begin{proof}[Proof of Proposition \ref{sec:equiv-monodr-categ-1}]
This is similar to \cite[Lemma 11.12]{lusztigEndoscopyHeckeCategories2020a}. For $\mu, \lambda\in \mathfrak{o}$, let \[W_{\mu,\lambda} = \{w \in W, w\mu = \lambda\}.\] Recall a notion of a minimal element in $W_{\mu,\lambda}$ from \cite[Lemma 4.2]{lusztigEndoscopyHeckeCategories2020a}. It is known that if $w \in W_{\mu,\lambda}$ is a minimal element, one has that the canonical map
  \begin{equation}
    \label{eq:10}
    \DD_{w, \lambda} \to \NN_{w, \lambda}
  \end{equation}
  is an isomorphism, see \cite[Corollary 8.5.5]{gouttardPerverseMonodromicSheaves2021}, and that if $w' \in W_{\nu, \mu}$ is a minimal element, then $ww' \in W_{\nu,\lambda}$ is again a minimal element, see \cite[Corollary 4.3]{lusztigEndoscopyHeckeCategories2020a}.

  Let $\pi_{23}$ stand for the projection
  \[
   \pi_{23}:G^2\backslash (G/U)^4 \to G^2\backslash (G/U)^4/T, 
 \]
 where the action of $T$ on the right is along the factors 2, 3.
  For $w$ a minimal element in $W_{\mu,\lambda},$ let
  \[\bar{\Xi}_{\lambda,\mu}^w = \pi_{23!}(\DD_{w,\lambda}\boxtimes\DD_{w^{-1},\mu}).\]
  This object is perverse and $T$-equivariant with respect to the action of $T$ along the factors 1, 4, and so descends to an object
\[\Xi_{\lambda,\mu}^w \in \Hecke{2}_{\lambda,\mu}.\]

  It follows from diagram chase, Proposition \ref{braid_relations} and the fact that \eqref{eq:10} is an isomorphism that the functor $\Xi_{\lambda,\mu}\bowtie -$ is an equivalence of categories $\Hecke{1}_{\lambda} \to \Hecke{1}_\mu.$

  For $F \in \mathcal{Z}\Hecke{1}_{\lambda}, \mu \in \mathfrak{o}$, define a functor $F_\mu:\Hecke{1}_{\mu} \to \Hecke{1}_{\mu}$ by
  \[F_\mu(A) := \Xi_{\lambda,\mu}^{w}\bowtie F(\Xi_{\mu,\lambda}^{w^{-1}}\bowtie A).\]
  Using the central structure on $F$, we see that $F_\mu$ does not depend on the choice of minimal elements and isomorphism classes of standard pro-objects up to a canonical isomorphism. It follows that the assignment
  \[
    F \mapsto \oplus_{\mu}F_\mu
  \]
  defines a functor inverse to $r_{\lambda}$.
\end{proof}
The rest of the section is occupied with the proof of Theorem \ref{sec:equiv-monodr-categ}. We will need the following:
\begin{proposition}
 \label{sec:monodr-categ-char-5} Let $\mathfrak{Z}$ be an object in $\Hecke{1}_{\mathfrak{o}}$ such that the convolution functor $- \star_1 \mathfrak{Z}$ can be given a structure of the module endofunctor in $\mathcal{Z}\Hecke{1}_{\mathfrak{o}}$. Then the functor $ - \star_1 \mathfrak{Z}$, considered as a  functor from $\operatorname{Fun}(\Hecke{1},\Hecke{1})$ with values in $\Hecke{1}_{\mathfrak{o}}$, can be given a structure of a module endofunctor in $\mathcal{Z}\Hecke{1}$, compatible with its structure of a functor in $\mathcal{Z}\Hecke{1}_{\mathfrak{o}}$ on monodromic objects in $\Hecke{2}$. 
\end{proposition}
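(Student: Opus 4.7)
The plan is to use the orthogonal decomposition of $\Hecke{1}$ and $\Hecke{2}$ into monodromic summands to see that the desired $\Hecke{2}$-module structure on $-\star_1 \mathfrak{Z}$ is forced by its $\Hecke{2}_{\mathfrak{o}}$-module structure, because all other components of $\Hecke{2}$ act by zero on both sides of the structural isomorphism.

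First I would record two vanishing/support properties which follow by tracking monodromies through the diagrams defining $\star_1$ and $\bowtie$: (a) for $\mathcal{A}\in\Hecke{1}$ with decomposition $\mathcal{A}=\bigoplus_{\mathfrak{o}'}\mathcal{A}_{\mathfrak{o}'}$, the convolution $\mathcal{A}\star_1 \mathfrak{Z}$ vanishes on every component except the $\mathfrak{o}$-component $\mathcal{A}_{\mathfrak{o}}$, and lies in $\Hecke{1}_{\mathfrak{o}}$; and (b) for $\mathcal{B}\in\Hecke{2}_{\lambda,\mu}$ and $\mathcal{C}\in\Hecke{1}_{\nu}$, the action $\mathcal{B}\bowtie\mathcal{C}$ lies in $\Hecke{1}_{\lambda}$ and vanishes unless $\mu=\nu$. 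Both statements reduce to orthogonality of monodromic subcategories of $D^b$ on the various product spaces, applied after pulling back by $p_{ij}^*$, tensoring, and proper pushforward.

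Next, for arbitrary $\mathcal{A}\in \Hecke{1}$ and $\mathcal{B}\in\Hecke{2}$, I would define the structural isomorphism
\[
s_{\mathcal{B},\mathcal{A}}\colon\mathcal{B}\bowtie(\mathcal{A}\star_1 \mathfrak{Z})\xrightarrow{\sim}(\mathcal{B}\bowtie\mathcal{A})\star_1 \mathfrak{Z}
\]
component-wise on the bigraded decomposition $\mathcal{B}=\bigoplus_{\lambda,\mu}\mathcal{B}_{\lambda,\mu}$. By properties (a) and (b), both sides are zero off the summand $\bigoplus_{\lambda,\mu\in\mathfrak{o}}\mathcal{B}_{\lambda,\mu}$, i.e.\ the $\Hecke{2}_{\mathfrak{o}}$-component of $\mathcal{B}$; on that component $s_{\mathcal{B},\mathcal{A}}$ is supplied by the hypothesis that $-\star_1 \mathfrak{Z}$ is a module endofunctor in $\mathcal{Z}\Hecke{1}_{\mathfrak{o}}$. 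Naturality in $\mathcal{A}$ and $\mathcal{B}$ is inherited from the monodromic case, plus vanishing on the off-diagonal summands.

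Finally, the associativity coherence axiom for a module endofunctor, compatibility of $s$ with the $\Hecke{2}$-monoidal structure $\star_2$ applied to $\mathcal{B}_1,\mathcal{B}_2\in\Hecke{2}$, follows because $\star_2$ respects the bigrading on $\Hecke{2}$ (i.e.\ $\Hecke{2}_{\lambda,\mu}\star_2\Hecke{2}_{\mu',\nu}$ is zero unless $\mu=\mu'$, and otherwise lies in $\Hecke{2}_{\lambda,\nu}$) so that, by (b) again, only the $\Hecke{2}_{\mathfrak{o}}$-components of both $\mathcal{B}_i$ can contribute, reducing the coherence to the one already known inside $\mathcal{Z}\Hecke{1}_{\mathfrak{o}}$. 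Compatibility with the given monodromic structure on monodromic $\mathcal{B}$ is immediate by construction. The main obstacle is verifying the orthogonality statements (a) and (b) carefully, especially distinguishing the ``internal'' monodromies (along factors $2,3$ of $\yy^{(2)}$) from the ``external'' ones (along factors $1,4$) in the $\bowtie$ diagram; once this bookkeeping is done, everything else is formal.
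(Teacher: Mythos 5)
There is a genuine gap, and it sits at the very first step: the direct sum decompositions you rely on do not exist. The monodromic categories $\Hecke{1}_{\lambda}$ and $\Hecke{2}_{\lambda,\mu}$ are \emph{full subcategories} of $\Hecke{1}$ and $\Hecke{2}$, not orthogonal summands exhausting them. The paper's decomposition $D^b(X\mondash{S} A)\simeq\bigoplus_{\lambda\in S}D^b(X\mondash{\lambda}A)$ holds only inside the monodromic subcategory attached to a finite set $S$ of tame/finite-order characters; a general object of $\Hecke{1}$ or $\Hecke{2}$ (for instance a complex that is not locally constant along the torus orbits, such as a skyscraper on $T\subset U\backslash G/U$, or a rank-one local system whose monodromy is not of finite order, resp.\ not tame) lies in no $\Hecke{1}_{\lambda}$, $\Hecke{2}_{\lambda,\mu}$ and in no finite sum of them. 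So writing $\mathcal{A}=\bigoplus_{\mathfrak{o}'}\mathcal{A}_{\mathfrak{o}'}$ and $\mathcal{B}=\bigoplus_{\lambda,\mu}\mathcal{B}_{\lambda,\mu}$ is not available, and your component-wise definition of $s_{\mathcal{B},\mathcal{A}}$ only produces the structural isomorphism for $\mathcal{B}$ already in $\Hecke{2}_{\mathfrak{o}}$ (where it is the given one) — which is exactly the hypothesis, not the conclusion. The whole content of the proposition is the extension of the isomorphism $\mathcal{B}\bowtie(\mathcal{A}\star_1\mathfrak{Z})\simeq(\mathcal{B}\bowtie\mathcal{A})\star_1\mathfrak{Z}$ to \emph{arbitrary, non-monodromic} $\mathcal{B}\in\Hecke{2}$, and your argument leaves that untouched. (Your orthogonality statement (b) for monodromic inputs is fine, and the fact that $-\star_1\mathfrak{Z}$ lands in $\Hecke{1}_{\mathfrak{o}}$ is true, but neither substitutes for the missing decomposition.)

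For comparison, the paper gets around this precisely because no such decomposition exists: fixing $\cA$, it considers the two functors $F_1(\cB)=\cB\bowtie(\cA\star_1\mathfrak{Z})$ and $F_2(\cB)=(\cB\bowtie\cA)\star_1\mathfrak{Z}$ from $\Hecke{2}$ to $\Hecke{1}_{\mathfrak{o}}$, which agree on the full subcategory $\Hecke{2}_{\mathfrak{o}}$ by hypothesis, and then invokes Lemma \ref{sec:equiv-monodr-categ-2}: two functors agreeing on a full subcategory are isomorphic (compatibly) provided their right adjoints take values in that subcategory. The real work is the verification that these right adjoints — a $*$-convolution with a Verdier dual for the $\star_1$ part, and an averaging functor $\operatorname{Av}^G_{G\times G*}$ applied to $p_{14}^*(-)\overset{!}{\otimes}p_{23}^*\D\cA$ for the $\bowtie$ part — are monodromic, using that monodromicity along one factor of $G/U\times G/U$ propagates to the other. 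If you want to repair your proof, you would need an argument of this adjunction type (or some other device to extend a natural transformation from $\Hecke{2}_{\mathfrak{o}}$ to all of $\Hecke{2}$); the bookkeeping of internal versus external monodromies that you flag as the main obstacle is not where the difficulty lies.
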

This will be in turn deduced using the following simple statement.
\begin{lemma}
  \label{sec:equiv-monodr-categ-2}
 Let $\cC, \cD$ be categories and let $\cC'$ be a full subcategory of $\cC$. Let $F_1, F_2 : \cC \to \cD $ be two functors, such that their restriction to $\cC'$ are isomorphic. Assume that $G_1, G_2$ are right adjoint to $F_1, F_2$ respectively. If $G_1, G_2$ take values in $\cC'$, then $F_1 \simeq F_2$, with isomorphism compatible with the chosen isomorphism of their restrictions to $\cC'$. 
\end{lemma}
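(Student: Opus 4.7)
The plan is to first transfer the given isomorphism $\alpha \colon F_1|_{\cC'} \xrightarrow{\sim} F_2|_{\cC'}$ to an isomorphism of the right adjoints $G_1 \simeq G_2 \colon \cD \to \cC$, and then invoke the uniqueness of left adjoints to produce $F_1 \simeq F_2$ globally on $\cC$.

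For the first step I would proceed as follows. Fix $Y \in \cD$; by hypothesis $G_1 Y$ and $G_2 Y$ both lie in $\cC'$, and $\cC'$ is a full subcategory of $\cC$, so the Yoneda lemma applied inside $\cC'$ will suffice to produce a morphism $G_1 Y \to G_2 Y$: one only needs a natural isomorphism of the restricted Hom-functors $\Hom_{\cC}(-,G_1 Y)|_{\cC'} \simeq \Hom_{\cC}(-,G_2 Y)|_{\cC'}$. For $X' \in \cC'$, such an isomorphism is supplied by the composition
\[
\Hom_{\cC}(X', G_1 Y) \cong \Hom_{\cD}(F_1 X', Y) \xrightarrow{\alpha^*_{X'}} \Hom_{\cD}(F_2 X', Y) \cong \Hom_{\cC}(X', G_2 Y),
\]
which uses the two adjunctions and the given $\alpha$. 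Naturality in $X' \in \cC'$ is automatic from naturality of $\alpha$ and of the adjunction units/counits, so Yoneda produces a unique $\beta_Y \colon G_1 Y \to G_2 Y$; it is invertible because every factor in the display is. Naturality of $\beta_Y$ in $Y$ is then a routine diagram chase.

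For the second step I would use the formal fact that left adjoints are determined up to unique natural isomorphism by their right adjoints: the mates correspondence applied to $\beta \colon G_1 \xrightarrow{\sim} G_2$ yields the desired isomorphism $F_1 \simeq F_2$. To verify compatibility with $\alpha$, one checks that for $X' \in \cC'$ the induced map $F_1 X' \to F_2 X'$ is characterized (again via Yoneda, now in $\cD$) as the unique morphism whose image under $\Hom_{\cD}(-,Y)$ transforms via the adjunctions into the map $\Hom_{\cC}(X',G_1 Y) \to \Hom_{\cC}(X',G_2 Y)$ built from $\beta_Y$; by construction this transformed map is precisely $\alpha^*_{X'}$, so the restricted isomorphism must equal $\alpha_{X'}$.

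I do not expect a genuine obstacle here: the whole statement is a formal consequence of the Yoneda lemma combined with uniqueness of adjoints. The one point that deserves attention is the \emph{choice of domain} of Yoneda in the first step — one works inside $\cC'$, where $G_i Y$ live and where $\alpha$ is available, rather than inside $\cC$, where one has no way to compare $F_1 X$ with $F_2 X$ for a general $X$. Once this is pinpointed, the rest is bookkeeping of adjunction data.
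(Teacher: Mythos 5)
Your proposal is correct and follows essentially the same route as the paper: identify $G_1,G_2$ (via fullness of $\cC'$ and the adjunctions) as right adjoints of the restrictions $F_i|_{\cC'}$, deduce $G_1\simeq G_2$, then recover $F_1\simeq F_2$ by uniqueness of left adjoints; you merely spell out the Yoneda and mate arguments that the paper leaves implicit. The only quibble is notational: precomposition with $\alpha_{X'}$ maps $\Hom_{\cD}(F_2X',Y)$ to $\Hom_{\cD}(F_1X',Y)$, so your displayed arrow should be $(\alpha_{X'}^{-1})^*$ (harmless, since $\alpha$ is invertible).
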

\begin{proof}
 Indeed, since $G_1, G_2$ take values in $\cC'$, they are right adjoint to restrictions of $F_1, F_2$ to $\cC'$, respectively. Since the two restrictions are isomorphic, it follows that $G_1 \simeq G_2$, and so $F_1 \simeq F_2$. 
\end{proof}
\begin{proof}[Proof of Proposition \ref{sec:monodr-categ-char-5}]
  We need to construct, for all objects $\cA \in \Hecke{1}, \cB \in \Hecke{2},$ a natural isomorphism
  \[
    \cB \bowtie (\cA \star_1 \mathfrak{Z}) \rsim (\cB \bowtie \cA)\star_1 \mathfrak{Z}.
  \]
   Define two functors $\Hecke{2} \to \Hecke{1}_{\mathfrak{o}}$ as
  \[
    F_1(\cB) = \cB \bowtie (\cA \star_1 \mathfrak{Z}), F_2(\cB) = (\cB \bowtie \cA) \star_1 \mathfrak{Z}.
  \]
  Since $-\star_1 \mathfrak{Z}$ has a structure of the functor in $\mathcal{Z}\Hecke{1}_{\mathfrak{o}}$, the functor $F_1$ is isomorphic to $F_2$ when restricted to $\Hecke{1}_{\mathfrak{o}}$. Hence, by Lemma \ref{sec:equiv-monodr-categ-2}, it is enough to show that the functors right adjoint to $F_1, F_2$ take values in $\Hecke{2}_{\mathfrak{o}}$.

It is easy to see that the functor right adjoint to $-\star_1\mathfrak{X}, \mathfrak{X} \in \Hecke{1},$ preserves the category $\Hecke{1}_{\mathfrak{o}}$ (this functor is given by dually defined ``$*$-convolution'' with $\D\mathfrak{X}$). It is thus enough to show that the functor right adjoint to $\cB \mapsto \cB \bowtie \cA$ takes values in $\Hecke{2}_{\mathfrak{o}}$ for $\cA \in \Hecke{1}_{\mathfrak{o}}$.

  Recall that the definition of the functor $- \bowtie -$. We have a diagram
\[
  \begin{tikzcd}
    & \yy^{(2)} \arrow[ld,"p_{23}"']\arrow[rd,"p_{14}"] & \\
\yy &                                      & \yy 
  \end{tikzcd}
\]
For $\mathcal{B} \in \Hecke{2},\mathcal{A}\in\Hecke{1},$ we defined
\[
  \mathcal{B}\bowtie\mathcal{A} = p_{14!}(\operatorname{For}^{G^2}_G(\mathcal{B}) \otimes p_{23}^*(\mathcal{A})).
\]
It follows that the right adjoint is given by the following expression, up to the omitted shift:
\[
 \F \mapsto \operatorname{Av}_{G \times G*}^Gp_{14}^*\F \overset{!}\otimes p_{23}^*\D\cA,  
\]
where $\operatorname{Av}_{G \times G*}^G$ stands for the operator of $*$-averaging from $G$-equivariant derived category to $G \times G$-equivariant derived category. Since $\cA$ is assumed to be monodromic, the sheaf  $p_{14}^*\F \overset{!}\otimes p_{23}^*\D\cA$ is monodromic with respect to the projection $G/U \to G/B$ along the second and third factors. Since the left action of $G \times G$ commutes with the right $T$-action along every factor, it follows that $\operatorname{Av}_{G \times G*}^Gp_{14}^*\F \overset{!}\otimes p_{23}^*\D\cA$ is also monodromic with respect to the corresponding projection. The claim now follows, since the complex in $D^b(G\backslash (G/U \times G/U))$ monodromic with respect to the projection along one factor is also monodromic with respect to the projection along the other.
\end{proof}
\begin{proof}[Proof of Theorem \ref{sec:monodr-categ-char}.]
  Let $(\mathcal{Z}\Hecke{1})_{\mathfrak{o}}$ be the full subcategory of $\mathcal{Z}\Hecke{1}$ with objects $F$ satisfying $\varepsilon(F) \in \Hecke{1}_{\mathfrak{o}}$. Since $\varepsilon$ is monoidal, the subcategory $(\mathcal{Z}\Hecke{1})_{\mathfrak{o}}$ is semigroupal. 

Now it follows from Theorem \ref{sec:hecke-categ-char} and 
Proposition \ref{sec:sheaves-group-as-1} that $\tilde{a}$ induces an equivalence of semigroupal categories 
\[
D_{\mathfrak{C}}^b(G) \to (\mathcal{Z}\Hecke{1})_{\mathfrak{o}}. 
\]

It suffices to show that there exists a natural equivalence of semigroupal categories 
\[(\mathcal{Z}\Hecke{1})_{\mathfrak{o}}\rsim \mathcal{Z}\Hecke{1}_{\mathfrak{o}}.\] 

The functor $(\mathcal{Z}\Hecke{1})_{\mathfrak{o}}\to \mathcal{Z}\Hecke{1}_{\mathfrak{o}}$ is given by forgetting the module structure for the action of non-monodromic objects. For $F \in (\mathcal{Z}\Hecke{1})_{\mathfrak{o}}$ the condition on the limit $F(\dd_{\mathfrak{o}})$ to exist follows from $\dd_{\mathfrak{o}}$ being a monoidal pro-unit, see Proposition \ref{braid_relations} \ref{item:18}.

To define the inverse functor, we need to show that any functor in $F_{\mathfrak{o}} \in \mathcal{Z}\Hecke{1}_{\mathfrak{o}}$ can be uniquely extended to a functor $F \in \mathcal{Z}\Hecke{1}$, with \[F_{\mathfrak{o}} \simeq F\circ \iota_{\mathfrak{o}},\] where $\iota_{\mathfrak{o}}$ is the canonical embedding $\Hecke{1}_{\mathfrak{o}}\to\Hecke{1}$.

First note that, since $F_{\mathfrak{o}}(\dd_{\mathfrak{o}})$ exists in $\Hecke{1}_{\mathfrak{o}}$, we have \[F_{\mathfrak{o}}(\cA) \simeq \cA \star_1 F_{\mathfrak{o}}(\dd_{\mathfrak{o}}).\] Indeed, since in the monodromic category $*$-convolution is isomorphic, up to a homological shift, to the $!$-convolution (see, for example, \cite[Lemma 4.1.1]{hhh}), the functor $\cA$ is right adjoint and so preserves limits. We thus have, using Lemma \ref{sec:centr-categ-as} and the fact that $\dd_{\mathfrak{o}}$ is a monoidal unit
\begin{multline}
  \cA \star_1 F_{\mathfrak{o}}(\dd_{\mathfrak{o}}) \simeq F_{\mathfrak{o}}(\cA \star_1 \dd_{\mathfrak{o}}) \simeq F_{\mathfrak{o}}(\cA) \star_1 \dd_{\mathfrak{o}} \simeq F_{\mathfrak{o}}(\cA). 
\end{multline}
The result now follows from Proposition \ref{sec:monodr-categ-char-5} applied to $\mathfrak{Z} = F_{\mathfrak{o}}(\dd_{\mathfrak{o}}).$
\end{proof}
\subsection{Free-monodromic tilting sheaves.}\label{sec:free-monodr-tilt}

\emph{Until the end of this section, we work in the settings \ref{set:c}-\ref{set:adic}.}

Recall that an object in $\hM{\mu}{\lambda}$ is called a free-monodromic tilting object, if it can be both obtained by successive extensions of standard free-monodromic objects and by successive extensions of costandard free-monodromic objects. 

We have the following standard corollary of Proposition \ref{braid_relations}.
\begin{corollary}
  \label{sec:some-facts-about}
  Let $T$ be a free-monodromic tilting object in $\hM{\mu}{\lambda}$.
  \begin{enumerate}[label=\alph*),ref=(\alph*)]
  \item\label{item:7} The convolution functors $T\star_1 -,-\star_1T$ are t-exact with respect to the perverse t-structure.
  \item\label{item:8} The objects $\DD_{w_0,\mu}\star_1 T \star_1 \NN_{w_0,w_0\lambda}, \NN_{w_0,\mu}\star_1 T\star_1 \DD_{w_0,w_0\lambda}$ are free-monodromic tilting. 
  \end{enumerate} 
\end{corollary}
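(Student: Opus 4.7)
For part \ref{item:7}, the plan is to combine both filtrations of $T$ with the standard t-exactness properties of individual free-monodromic standards and costandards: $\DD_{w,\lambda}\star_1(-)$ is right t-exact (essentially because, after decomposing $\DD_w$ via Proposition \ref{braid_relations}~\ref{item:19} into simple-reflection factors, the convolution reduces to $!$-pushforwards along one-parameter Bruhat strata) and $\NN_{w,\lambda}\star_1(-)$ is left t-exact by the dual argument. Applied to the standard filtration of $T$, successive cones show $T\star_1\F\in{}^pD^{\leq 0}$ for perverse $\F$; the costandard filtration of $T$ similarly gives $T\star_1\F\in{}^pD^{\geq 0}$, and together these yield perverse-t-exactness of $T\star_1(-)$. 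The argument for $(-)\star_1 T$ is the mirror.

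For part \ref{item:8}, the key step is to compute the effect of conjugation by $\DD_{w_0}$ and $\NN_{w_0}$ on individual standards and costandards. Using the factorization $\NN_{w_0}\simeq\NN_{w^{-1}}\star_1\NN_{ww_0}$ (valid by Proposition \ref{braid_relations}~\ref{item:19} since $l(w^{-1})+l(ww_0)=l(w_0)$ for any $w\leq w_0$) together with $\DD_w\star_1\NN_{w^{-1}}\simeq\dd$ from \ref{item:20}, one obtains
\[
\DD_w\star_1\NN_{w_0}\simeq\NN_{ww_0};
\]
the dual factorization $\DD_{w_0}\simeq\DD_{w_0 v}\star_1\DD_{v^{-1}}$ together with $\DD_{v^{-1}}\star_1\NN_v\simeq\dd$ gives
\[
\DD_{w_0}\star_1\NN_v\simeq\DD_{w_0 v}.
\]
Composing these two identities gives
\[
\DD_{w_0,\mu}\star_1\DD_{v,\lambda}\star_1\NN_{w_0,w_0\lambda}\simeq\DD_{w_0 v w_0, w_0\lambda}, \qquad \DD_{w_0,\mu}\star_1\NN_{v,\lambda}\star_1\NN_{w_0,w_0\lambda}\simeq\NN_{w_0 v w_0, w_0\lambda}.
\]

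Since $\DD_{w_0}\star_1\NN_{w_0}\simeq\dd$ by Proposition \ref{braid_relations}~\ref{item:20} (as $w_0=w_0^{-1}$), the conjugation functor $\Phi(X):=\DD_{w_0,\mu}\star_1 X\star_1\NN_{w_0,w_0\lambda}$ is a monoidal autoequivalence; the identities above show that it takes free-monodromic standards to standards and costandards to costandards, and in particular preserves the perverse heart. Applying $\Phi$ to the standard and costandard filtrations of $T$ then produces a standard and a costandard filtration of $\Phi(T)$ in the heart, so $\Phi(T)$ is free-monodromic tilting. The second object $\NN_{w_0,\mu}\star_1 T\star_1\DD_{w_0,w_0\lambda}$ is handled by the completely symmetric argument with the roles of $\DD_{w_0}$ and $\NN_{w_0}$ swapped. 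The main delicate point throughout is the monodromy-index bookkeeping ensuring that all the braid identities compose correctly, but this is mechanical once the length-additivity conditions are verified.
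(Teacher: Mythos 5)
Your proposal is correct and is exactly the standard deduction from Proposition \ref{braid_relations} (together with the usual one-sided t-exactness of convolution with free-monodromic standards and costandards) that the paper leaves implicit by stating the corollary without proof as ``standard''. The only inessential overstatements are calling the conjugation functor a monoidal autoequivalence and claiming it preserves the perverse heart: since free-monodromic tilting objects are defined via successive extensions of (co)standards, exactness of the functor plus your computed images $\DD_{w_0,\mu}\star_1\DD_{v,\lambda}\star_1\NN_{w_0,w_0\lambda}\simeq\DD_{w_0vw_0,w_0\lambda}$ and $\DD_{w_0,\mu}\star_1\NN_{v,\lambda}\star_1\NN_{w_0,w_0\lambda}\simeq\NN_{w_0vw_0,w_0\lambda}$ already suffice.
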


\subsection{DG models for monodromic categories.}\label{sec:dg-models-monodromic-1}
Define the shifted  perverse t-structure on ${}_{\mu}\M_\lambda$ and its heart by
\[
  {}_{\mu}D^{\leq 0}_{w_0,\lambda} = \DD_{w_0,w_0\mu}\star_1 {}^pD^{\leq 0},{}_{\mu}D^{\geq 0}_{w_0,\lambda} = \DD_{w_0,w_0\mu}\star_1 {}^pD^{\geq 0},
\]
\[
    {}_{\mu}\Perv_{w_0,\lambda} := \DD_{w_0,w_0\mu}\star_1{}_{w_0\mu}\Perv_{\lambda}.
\]
Here ${}^pD^{\leq 0}, {}^pD^{\geq 0}$ stand for the perverse t-structure on ${}_{w_0\mu}\M_{\lambda}$.

Let $X$ be a scheme with a left action of an algebraic group $G$ and commuting right action of an algebraic torus $A$. Let $S_{A,\lambda}$ be the completed local ring at $\lambda$ of $A^\vee_\mathbb{k}$ in settings  \ref{set:c}-\ref{set:p} or the  moduli space of tame local systems on $A$ in the setting \ref{set:adic}. Let $K_{A,\lambda}$ be the Koszul complex of the maximal ideal in $S_{A,\lambda}$. We may identify $S_{A,\lambda}$ with the symmetric algebra of the $\mathbb{k}$-vector space $V_{A,\lambda}$ completed at the origin. 

We will need some explicit DG models for the derived $\Hecke{1}_{\lambda}$ and $\Hecke{2}_{\lambda,\lambda}$, which we now describe.

Let $\T^{(1)}_{\lambda,DG}$ be the category of complexes of free-monodromic tilting sheaves in ${}_\lambda\hat{\M}_\lambda$ together with the action of the DG-algebra $K_{T,\lambda}$ for the right diagonal action of $T$ on $Y^2$, with action of $S_{T,\lambda}$ compatible with the monodromy action, and such that the underlying complex represents an object in ${}_\lambda\M_\lambda$.

Let $\mathcal{P}^{(1)}_{\lambda,DG}$ be the category of complexes of perverse sheaves in ${}_\lambda\M_\lambda$ together with the action of the DG-algebra $K_{T,\lambda}$ for the right diagonal action of $T$ on $Y^2$, with action of $S_{T,\lambda}$ compatible with the monodromy action.

Let $\mathcal{P}_{w_0,\lambda,DG}^{(1)}$ be the category of complexes of objects in ${}_\lambda\mathcal{P}_{w_0,\lambda}$ together with the action of the DG-algebra $K_{T,\lambda}$ for the right diagonal action of $T$ on $Y^2$, with action of $S_{T,\lambda}$ compatible with the monodromy action.


Let ${\T}^{(2)}_{\lambda,\lambda,DG}$ be the category of complexes of free-monodromic tilting sheaves in ${}_{\lambda,\lambda}\hat{\M}^{(2)}_{\lambda,\lambda}:=\hat{D}^b(G^2 \backslash ((G/U)^4\mondash{(\lambda,\lambda,\lambda,\lambda)}T^4)$ together with the action of the DG-algebra $K_{T^2,(\lambda,\lambda)}$ for the action of $T^2$ on $Y^4$ given by the formula \eqref{eq:6}, with the action of $S_{T^2,(\lambda,\lambda)}$ compatible with the monodromy action, and such that the underlying complex represents an object in ${}_{\lambda,\lambda}\M^{(2)}_{\lambda,\lambda}:={D}^b(G^2 \backslash ((G/U)^4\mondash{(\lambda,\lambda,\lambda,\lambda)}T^4)$. 

We define the convolutions $-\star^{DG}_1-$ between $\T^{(1)}_{\lambda,DG}$ and either $\T^{(1)}_{\lambda,DG}, \mathcal{P}^{(1)}_{\lambda,DG}$ or  $\mathcal{P}_{w_0,\lambda,DG}^{(1)}$; $-\star^{DG}_2-$ between ${\T}^{(2)}_{\lambda,\lambda,DG}$ and
${\T}^{(2)}_{\lambda,\lambda,DG}$;
and   ${-\bowtie^{DG}-}$ between ${\T}^{(2)}_{\lambda,\lambda,DG}$ and $\T^{(1)}_{\lambda,DG}$ 
as follows. The convolution between two  free-monodromic tilting objects $\mathcal{T}_1\star_1\mathcal{T}_2= p_{13!}(p_{12}^*\mathcal{T}_1\otimes p_{23}^*\mathcal{T}_2)$ is defined via diagram
\[
  \begin{tikzcd}
    & Y^3 \arrow[ld,"p_{12}"']\arrow[rd,"p_{23}"]\arrow[r,"p_{13}"]  & Y^2 \\
Y^2  &                                      & Y^2, 
  \end{tikzcd}
\]
and is again a tilting object as follows from Proposition \ref{braid_relations}. Considering the sheaves on $Y^3$ equipped with the action of $K_{T,\lambda}$ corresponding to the diagonal action of $T$ makes the operations compatible with the $K_{T,\lambda}$-action, thus providing an action of $K_{T,\lambda}$ on $\mathcal{T}_1\star_1\mathcal{T}_2$.  This gives a convolution of two objects in $\T^{(1)}_{\lambda,DG}$. We define $P_1\star_1^{DG}\mathcal{T}_2$ with $P_1$ in ${}_\lambda\mathcal{P}_{\lambda}$ or ${}_\lambda\mathcal{P}_{w_0,\lambda}$ and $\mathcal{T}_2$ free-monodromic tilting as an object in $\mathcal{P}^{(1)}_{\lambda,DG}$ or $\mathcal{P}_{w_0,\lambda,DG}^{(1)}$ in the same way by applying Corollary \ref{sec:some-facts-about}. 

To define $\mathcal{T}_1\star_2^{DG} \mathcal{T}_2$ in ${\T}^{(2)}_{\lambda,\lambda,DG}$  we start with taking \[
\mathcal{T}_1\star'_2\mathcal{T}_2 = p_{1346!}(p_{1256}^*\mathcal{T}_1\otimes p_{2345}^*\mathcal{T}_2).
\] defined via diagram 
\[
  \begin{tikzcd}
    & Y^6 \arrow[ld,"p_{1256}"']\arrow[rd,"p_{2345}"]\arrow[r,"p_{1346}"]  & Y^4 \\
Y^4 &                                      & Y^4.
  \end{tikzcd}
\] 
This complex is equipped with the action of $K_{T^2,(\lambda,\lambda)}$ compatible with the action of the monodromy where the one copy of $K_{T,\lambda}$ acts via its action on the first and the fourth coordinates on $\mathcal{T}_1$ and the other via its action on the second and the third coordinates on $\mathcal{T}_2$. Furthermore, note that the action of $K_{T,\lambda}$ via the action on the second and the third coordinates on $\mathcal{T}_1$ and the action of $K_{T,\lambda}$ via the action on the first and the fourth coordinates on $\mathcal{T}_2$ induce the same action on $\mathcal{T}_1\star'_2\mathcal{T}_2$. The diagonal $K_{T,\lambda}$-action kills the augmentation ideal of its degree zero component $S_{T,\lambda}=K_{T,\lambda}^0$ and, hence, factors through the action of the exterior algebra $\Lambda(V_{T,\lambda}[1])$ of the Koszul generators.
We define 
 \[\mathcal{T}_1\star^{DG}_2\mathcal{T}_2:=(\mathcal{T}_1\star'_2 \mathcal{T}_2)\otimes^{L}_{\Lambda(V_{T,\lambda}[1])}\mathbb{k},\]
 where $\mathbb{k}$ is considered to be the 1-dimensional module over $\Lambda(V_{T,\lambda}[1])$ concentrated in homological degree 0. Together with the action of $K_{T^2,(\lambda,\lambda)}$ from the above this provides an object of ${\T}^{(2)}_{\lambda,\lambda,DG}$.
 
 Similarly,  to define $\mathcal{T}_1\bowtie^{DG} \mathcal{T}_2$ we start with $
  \mathcal{T}_1\bowtie' \mathcal{T}_2 = p_{14!}(\mathcal{T}_1 \otimes p_{23}^*(\mathcal{T}_2))$ given via
   \[
  \begin{tikzcd}
    & Y^4 \arrow[ld,"p_{23}"']\arrow[rd,"p_{14}"] & \\
Y^2 &                                      & Y^2.
  \end{tikzcd}
\]
The action of $K_{T,\lambda}$ on $\mathcal{T}_1\bowtie' \mathcal{T}_2$ comes from the action on $\mathcal{T}_1$ via the action on the first and the fourth coordinates. Meanwhile, the action of $K_{T,\lambda}$ on  $\mathcal{T}_1$ via the action on the second and the third coordinates and the action of $K_{T,\lambda}$ on $\mathcal{T}_2$ induce the same action on $\mathcal{T}_1\bowtie'\mathcal{T}_2$. The diagonal $K_{T,\lambda}$-action kills the augmentation ideal of its degree zero component $S_{T,\lambda}=K_{T,\lambda}^0$ and, hence, factors through the exterior algebra $\Lambda(V_{T,\lambda}[1])$. We define 
 \[\mathcal{T}_1\bowtie^{DG}\mathcal{T}_2:=(\mathcal{T}_1\bowtie'_2 \mathcal{T}_2)\otimes^{L}_{\Lambda(V_{T.\lambda}[1])}\mathbb{k}.\]
 Together with the action of $K_{T,\lambda}$ from the above this provides an object of $\T^{(1)}_{\lambda,DG}$.

We have the following
\begin{proposition}
 \label{sec:dg-models-mixed} 
 \begin{enumerate}[label=\alph*),ref=(\alph*)]
 \item The DG categories $\T^{(1)}_{\lambda,DG}, \mathcal{P}^{(1)}_{\lambda,DG}, \mathcal{P}_{w_0,\lambda,DG}^{(1)}$ are DG models for the category $\Hecke{1}_{\lambda}$.
 \item The DG categories $
  {\T}^{(2)}_{\lambda,\lambda,DG}$ is a DG model for the category $\Hecke{2}_{\lambda,\lambda}$.
 \item The convolutions $-\star^{DG}_1-,-\star^{DG}_2-$ and $-\bowtie^{DG}-$ define monoidal structures on the respective categories or actions of the monoidal categories. Moreover, the DG enhancements $\T^{(1)}_{\lambda,DG}, \mathcal{P}^{(1)}_{\lambda,DG},  \mathcal{P}_{w_0,\lambda,DG}^{(1)}$ of $\Hecke{1}_{\lambda}$  and ${\T}^{(2)}_{\lambda,\lambda,DG}$
    of $\Hecke{2}_{\lambda,\lambda}$ are compatible with the convolution structures.
 \end{enumerate}
\end{proposition}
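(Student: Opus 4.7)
The plan is to follow the DG-enhancement framework of \cite{bezrukavnikovKoszulDualityKacMoody2013} and \cite{gouttardPerverseMonodromicSheaves2021}. For part (a), I first observe that the full subcategory of free-monodromic tiltings inside $\hM{\lambda}{\lambda}$ generates it as a triangulated category, and that tiltings are stable under $\star_1$ by Corollary \ref{sec:some-facts-about} combined with Proposition \ref{braid_relations}; consequently bounded complexes of tiltings exhaust the homotopy category. The action of $S_{T,\lambda}$ arises from the completed monodromy along the right diagonal $T$-action on $Y^2$, and the Koszul resolution $K_{T,\lambda}$ of $\mathbb{k}$ over $S_{T,\lambda}$ is what promotes this completion to a genuine DG action; a standard $\operatorname{Hom}$-computation between tiltings via these resolutions then shows the homotopy category of $\T^{(1)}_{\lambda,DG}$ recovers $\Hecke{1}_\lambda$. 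The arguments for $\mathcal{P}^{(1)}_{\lambda,DG}$ and $\mathcal{P}^{(1)}_{w_0,\lambda,DG}$ are parallel, with tiltings replaced by perverse sheaves or their $w_0$-shifts and using t-exactness from Corollary \ref{sec:some-facts-about}\ref{item:7} to see that the convolutions remain in the subcategories. Part (b) is completely analogous, with $K_{T^2,(\lambda,\lambda)}$ acting via the $T^2$-action \eqref{eq:6}.

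For part (c), the compatibility of $\star_1^{DG}$ with $\star_1$ is essentially tautological: convolution of tiltings is tilting by Corollary \ref{sec:some-facts-about}, and the diagonal $T$-action on $Y^3$ together with its Koszul enhancement descends the $Y^2$-level answer to $\yy$. For $\star_2^{DG}$ and $\bowtie^{DG}$ the situation is more subtle because the target $\yy^{(2)} = Y^4/T^2$, not $Y^4$, so we must quotient by one additional copy of $T$ beyond what $K_{T^2,(\lambda,\lambda)}$ already encodes. The key observation, already sketched in the definitions before the proposition, is that two distinct coordinate $T$-actions on the intermediate space $Y^6$ (respectively $Y^4$) induce the \emph{same} $K_{T,\lambda}$-action on the raw convolution $\mathcal{T}_1 \star_2' \mathcal{T}_2$ (respectively $\mathcal{T}_1 \bowtie' \mathcal{T}_2$), so the resulting diagonal action factors through the exterior Koszul-generator algebra $\Lambda(V_{T,\lambda}[1])$. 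Derived tensoring with $\mathbb{k}$ over this exterior algebra implements precisely the derived coinvariants needed to quotient by the redundant torus action. Associativity of $\star_2^{DG}$ and the module-category axiom for $\bowtie^{DG}$ then reduce to diagram chases using smooth base change on the relevant $Y^n$'s together with the standard compatibilities between $\otimes^L$ and pullback/pushforward.

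The main obstacle is verifying that the derived tensor product $\otimes^L_{\Lambda(V_{T,\lambda}[1])}\mathbb{k}$ genuinely implements the desired geometric quotient at the DG level in a way compatible with the ambient $K_{T^2,(\lambda,\lambda)}$-structure. Concretely, one must check that the two a priori distinct $K_{T,\lambda}$-actions on $\mathcal{T}_1\star_2'\mathcal{T}_2$ and $\mathcal{T}_1\bowtie'\mathcal{T}_2$ coincide modulo the augmentation ideal, and that the induced $\Lambda(V_{T,\lambda}[1])$-action commutes with the remaining structural data so that the quotient is functorial and associative. This is the step that forces the restriction to settings \ref{set:c}--\ref{set:adic}, where the Koszul-dual interplay between $S_{T,\lambda}$ and $K_{T,\lambda}$ is sheaf-theoretically well behaved. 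Once this technical point is settled, everything else is a formal diagram chase in the spirit of the proofs of Lemma \ref{sec:centr-categ-as} and Lemma \ref{sec:proof-prop-refs}.
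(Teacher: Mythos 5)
Your proposal is substantively correct and follows the same general strategy: the paper's own proof consists only of the citation ``Parts a) and b) are a straightforward generalization of \cite[Lemma 44, Corollary 45]{bezrukavnikovTwoGeometricRealizations2016}; Part c) follows from \cite[Proposition 51]{bezrukavnikovTwoGeometricRealizations2016}, similarly to \cite[Proposition 50]{bezrukavnikovTwoGeometricRealizations2016},'' and your write-up accurately unpacks the ideas behind those references (complexes of free-monodromic tiltings or perverse sheaves generating the homotopy category, the $K_{T,\lambda}$-action as a Koszul-dual encoding of the descent along the diagonal right $T$-action, and $\otimes^L_{\Lambda(V_{T,\lambda}[1])}\mathbb{k}$ implementing derived coinvariants for the redundant torus). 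Two small calibration notes: the paper's actual reference is \cite{bezrukavnikovTwoGeometricRealizations2016} rather than \cite{bezrukavnikovKoszulDualityKacMoody2013}/\cite{gouttardPerverseMonodromicSheaves2021}, though these are in the same circle of ideas; and the restriction to settings \ref{set:c}--\ref{set:adic} is imposed at the outset of Subsection \ref{sec:free-monodr-tilt} because the free-monodromic tilting formalism is developed there, not specifically because the $\Lambda(V_{T,\lambda}[1])$-coinvariants step breaks down in the mixed setting.
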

\begin{proof}
Parts a) and b) are a straightforward generalization of \cite[Lemma 44, Corollary 45]{bezrukavnikovTwoGeometricRealizations2016}.  Part c) follows from \cite[Proposition 51]{bezrukavnikovTwoGeometricRealizations2016}, similarly to  \cite[Proposition 50]{bezrukavnikovTwoGeometricRealizations2016}.
\end{proof}

\subsection{Abelian category of character sheaves as a categorical center.}\label{sec:abel-categ-char-1}
Since the $\star$-convolution on $D^b(G/_{Ad}G)$ is left t-exact with respect to the perverse t-structure, the abelian category $\mathfrak{C}$ becomes a semigroupal category with product
\[
  X_1 \star^0 X_2 := {}^p\mathcal{H}^0(X_1\star X_2).
\]

We will use the following result, proved in characteristic 0 setting in \cite{bezrukavnikovCharacterDmodulesDrinfeld2012} and in general geometric setting in \cite{chenFormulaGeometricJacquet2017}. Note that the latter article works only with unipotent monodromy. Its methods can be extended to treat the general case; we also plan to give a new uniform proof in a subsequent publication. Let \[\M_{\mathfrak{o}} = \bigoplus_{\mu_1,\mu_2 \in \mathfrak{o}}{}_{\mu_1}\M_{\mu_2}.\]Let \[\operatorname{For}_T:\Hecke{1}_{\mathfrak{o}}\to\M_{\mathfrak{o}}\] stand for the functor forgetting the $T$-equivariance.

\begin{proposition}[\cite{bezrukavnikovCharacterDmodulesDrinfeld2012} Corollary 3.4, \cite{chenFormulaGeometricJacquet2017} Corollary 7.9]
\label{sec:monodr-categ-char-2} The functor \[\F \mapsto \NN_{w_0,\lambda}\star\operatorname{For}_T\hc(\F), D^b_{\mathfrak{C}_{\mathfrak{o}}}(G) \to \M_{\mathfrak{o}}\] is t-exact with respect to the perverse t-structure.
\end{proposition}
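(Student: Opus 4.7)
My plan is to identify the functor $\F \mapsto \NN_{w_0,\lambda} \star \operatorname{For}_T \hc(\F)$ with the pushforward along an ``open'' version of the Grothendieck--Springer-type horocycle correspondence, and then deduce t-exactness from standard smallness properties of that correspondence together with the geometry of character sheaves.

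First I would unwind the composition geometrically. Applying $\hc = q_! p^*$, forgetting the $T$-equivariance, and convolving with the costandard free-monodromic pro-object $\NN_{w_0,\lambda}$ should, by base change and the projection formula, coincide up to shift with a single pushforward-pullback along a correspondence
\[
G \xleftarrow{\pi} \tilde{G}^+ \xrightarrow{\rho} G\backslash (G/U)^2,
\]
where $\tilde{G}^+$ is the open subvariety of $G \times (G/U)^2$ cut out by the condition that $y^{-1}gx$ lies in the big Bruhat cell $U^{-}\cdot T\cdot U$. The key computation is the identification of convolution with $\NN_{w_0,\lambda}$ with the passage from the ``closed'' horocycle transform $q_!p^*$ to the ``open'' one $\rho_!\pi^!$, analogous to the identity $\chi\circ\hc \simeq \Sigma\star(-)$ in Lemma \ref{sec:springer-comonad-4} but twisted by $w_0$.

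Next, I would invoke the geometry of the map $\pi$. When restricted to the (finite) locus that supports character sheaves with semisimple parameter $\mathfrak{o}$, the map $\pi$ is \emph{small}, while $\rho$ is smooth of a known relative dimension. Together with Gabber's decomposition theorem, this yields that $\rho_!\pi^!$ (with the appropriate homological shift) sends perverse character sheaves to perverse sheaves in $\M_{\mathfrak{o}}$, i.e.\ t-exactness on the heart. Extension to all of $D^b_{\mathfrak{C}_{\mathfrak{o}}}(G)$ is automatic, since perverse character sheaves generate this subcategory under extensions and shifts.

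The main obstacle is the general monodromy case, where $\NN_{w_0,\lambda}$ is genuinely a pro-object rather than a finite sheaf, so that the geometric identification above must be established at the level of pro-completions. To handle this, I would work in the DG-models of Subsection \ref{sec:dg-models-monodromic-1}: there $\NN_{w_0,\lambda}$ is represented as a concrete complex of standard free-monodromic pieces together with an explicit $K_{T,\lambda}$-action, reducing the base-change calculation to a finite-level statement where the classical geometric argument applies verbatim. An alternative route would be to first establish the unipotent case (where one may appeal directly to \cite{bezrukavnikovCharacterDmodulesDrinfeld2012}) and then propagate to arbitrary $\lambda$ via a Mellin/central character translation, as in \cite{chenFormulaGeometricJacquet2017}.
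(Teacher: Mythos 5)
First, note that the paper does not prove this proposition at all: it is imported as an external input (\cite{bezrukavnikovCharacterDmodulesDrinfeld2012}, Corollary 3.4, in the characteristic-zero D-module setting, and \cite{chenFormulaGeometricJacquet2017}, Corollary 7.9, in the general geometric setting but only for unipotent monodromy), and the authors explicitly defer a uniform proof for general $\lambda$ to a later publication. So you are attempting to prove from scratch a statement the paper deliberately outsources, and your argument has a genuine gap at its core. The step ``$\pi$ is small \ldots together with Gabber's decomposition theorem'' does not work. The fibres of $\pi\colon \tilde G^{+}\to G$ over any $g$ are the open subsets $\{(x,y): y^{-1}gx\in U^{-}TU\}$ of $(G/U)^2$, hence of constant dimension $2\dim G/U$; the map is nowhere generically finite, so smallness fails everywhere, and ``restricting to the locus that supports character sheaves'' is not a property of the map that can repair this. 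Moreover neither $\pi$ nor $\rho$ is proper on the open correspondence, so the decomposition theorem is not available for $\rho_{!}$ in the first place. Most importantly, the t-exactness asserted here is not a general geometric property of the correspondence: it genuinely uses that the input lies in $D^b_{\mathfrak{C}_{\mathfrak{o}}}(G)$ (nilpotent characteristic variety / the Harish-Chandra system, equivalently the central structure), and your proposal never invokes any property of character sheaves beyond the vague restriction of the locus. An argument that would apply verbatim to an arbitrary $\operatorname{Ad}$-equivariant complex on $G$ cannot prove the statement, since for such complexes the functor $\F\mapsto\NN_{w_0,\lambda}\star\operatorname{For}_T\hc(\F)$ is not t-exact.

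Some surrounding pieces of your plan are salvageable: reducing to simple objects of the heart by length induction is fine, and the identification of convolution with $\NN_{w_0,\lambda}$ with an ``open-cell'' transform is morally correct, though as written it conflates a $*$-extension with a $!$-pushforward and must be handled with the monodromic $!{=}*$ identification for pro-objects (compare Proposition \ref{sec:radon-transf-conv} in the appendix). But the exactness input has to come from somewhere real: in \cite{bezrukavnikovCharacterDmodulesDrinfeld2012} it comes from the algebraic theory of Harish-Chandra bimodules, in \cite{chenFormulaGeometricJacquet2017} from the geometric Jacquet functor/nearby-cycles degeneration (closely related to Lusztig's exactness of parabolic restriction on character sheaves), and in the spirit of this paper's Appendix from the Vinberg-semigroup argument proving Proposition \ref{sec:some-facts-about-1} --- none of these is a smallness or dimension count. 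Your fallback of quoting \cite{bezrukavnikovCharacterDmodulesDrinfeld2012} for the unipotent case and ``propagating by Mellin translation as in \cite{chenFormulaGeometricJacquet2017}'' also does not close the gap: the former is confined to the D-module setting and the latter treats only unipotent monodromy, which is exactly why the paper states that Chen's methods ``can be extended'' and postpones the uniform proof.
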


\begin{proposition}
  \label{sec:some-facts-about-1}
  The convolution $\star_1:{}_{\lambda}\M_\lambda \times {}_{\lambda}\M_\lambda \to {}_{\lambda}\M_\lambda$ is left t-exact with respect to the shifted t-structure $({}_{\lambda}D^{\leq 0}_{w_0,\lambda}, {}_{\lambda}D^{\geq 0}_{w_0,\lambda})$.
\end{proposition}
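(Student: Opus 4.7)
The plan is to reduce the left t-exactness for the shifted perverse t-structure to the left t-exactness of $\star_1$ with respect to the standard perverse t-structure on the (mixed) monodromic categories ${}_\mu\M_\nu$, the latter being supplied by the t-exactness-of-convolution result proved in Appendix~\ref{sec:sheav-vinb-semigr}.

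First, I would extract from Proposition~\ref{braid_relations}~\ref{item:20} (applied to $v=w_0$ and two different choices of monodromy) the identities
\[
\DD_{w_0,w_0\lambda}\star_1\NN_{w_0,\lambda}\simeq\dd_\lambda,\qquad \NN_{w_0,\lambda}\star_1\DD_{w_0,w_0\lambda}\simeq\dd_{w_0\lambda},
\]
the second one following from the first by uniqueness of a two-sided inverse in a monoidal category. Consequently convolution with $\DD_{w_0,w_0\lambda}$ and with $\NN_{w_0,\lambda}$ are quasi-inverse equivalences between ${}_{w_0\lambda}\M_\lambda$ and ${}_\lambda\M_\lambda$. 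By the very definition of the shifted t-structure, an object $\mathcal{C}\in{}_\lambda\M_\lambda$ lies in ${}_\lambda D^{\geq 0}_{w_0,\lambda}$ if and only if $\NN_{w_0,\lambda}\star_1\mathcal{C}\in{}^pD^{\geq 0}$ in ${}_{w_0\lambda}\M_\lambda$.

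Given $\mathcal{A},\mathcal{B}\in{}_\lambda D^{\geq 0}_{w_0,\lambda}$, I would then write $\mathcal{A}\simeq\DD_{w_0,w_0\lambda}\star_1\mathcal{A}'$ and $\mathcal{B}\simeq\DD_{w_0,w_0\lambda}\star_1\mathcal{B}'$ with $\mathcal{A}',\mathcal{B}'\in{}^pD^{\geq 0}$ in ${}_{w_0\lambda}\M_\lambda$. Using associativity of $\star_1$ and the braid identity above,
\[
\NN_{w_0,\lambda}\star_1(\mathcal{A}\star_1\mathcal{B})\simeq\mathcal{A}'\star_1\DD_{w_0,w_0\lambda}\star_1\mathcal{B}',
\]
so the claim becomes: this triple convolution belongs to ${}^pD^{\geq 0}$ of ${}_{w_0\lambda}\M_\lambda$.

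Since $\mathcal{A}'$ and $\mathcal{B}'$ are perversely $\geq 0$ and $\DD_{w_0,w_0\lambda}$ is itself perverse (being a standard free-monodromic perverse sheaf), this last inclusion follows from left t-exactness of $\star_1$ with respect to the standard perverse t-structure on the mixed monodromic categories ${}_\mu\M_\nu$. This is precisely the perverse t-exactness of convolution on the horocycle space proved in Appendix~\ref{sec:sheav-vinb-semigr} via Drinfeld's Vinberg-semigroup argument. The main obstacle in the whole argument is this geometric input; once it is granted, the remainder of the proof is a formal computation with braid relations and the definition of the shifted t-structure. The only subtlety is that the Appendix phrases t-exactness in terms of the $T$-equivariant horocycle convolution on $\Hecke{1}$, so one must observe that the statement passes to the monodromic categories ${}_\mu\M_\nu$; this can be verified directly using the DG models of Subsection~\ref{sec:dg-models-monodromic-1}, where the convolution operations $\star_1^{DG}$ are compatible with those on $\Hecke{1}$.
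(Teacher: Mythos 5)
The formal part of your reduction is fine: using Proposition \ref{braid_relations} and the definition of the shifted t-structure to reduce the claim to showing that $\mathcal{A}'\star_1\DD_{w_0,w_0\lambda}\star_1\mathcal{B}'\in{}^pD^{\geq 0}$ for $\mathcal{A}',\mathcal{B}'\in{}^pD^{\geq 0}$ is essentially how such arguments go. The problem is the input you then invoke: ``left t-exactness of $\star_1$ with respect to the standard perverse t-structure on ${}_\mu\M_\nu$'' is false, and it is not what Appendix \ref{sec:sheav-vinb-semigr} proves. The pushforward in the definition of $\star_1$ is along $p_{13}$, whose fibers contain the proper variety $G/B$, and pushforward along a proper, non-affine map creates negative perverse degrees; already for $SL_2$ the convolution of the middle extension of the open stratum with itself has a summand in perverse degree $-1$ (the analogue of $IC_s\star IC_s\simeq IC_s[1]\oplus IC_s[-1]$). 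This failure is precisely what the shifted t-structure is designed to compensate for, so the step ``the triple convolution is in ${}^pD^{\geq 0}$ because $\DD_{w_0,w_0\lambda}$ is perverse and $\star_1$ is perversely left exact'' has no support. What the Appendix actually proves (Lemma \ref{sec:some-facts-about-2}, Corollary \ref{sec:vinb-semigr-horocycl-1}) is left t-exactness of the \emph{different} operation $\ustar$ on $D^b(G\backslash (G/U\times G/U^-)/T)$: there the relevant map is the multiplication of the affine semigroup fiber of the Vinberg semigroup, which is affine, so Artin's theorem applies. This is then transported to the statement about $\star_1$ and the shifted t-structure via the Radon transform $(-)\bistar{U}\hat{\mathfrak{R}}_*$, which intertwines $\star_1$ with $\ustar$; the geometric input lives on the mixed $(U,U^-)$ horocycle space, not on $\Hecke{1}$ or ${}_\mu\M_\nu$, and your closing remark about transferring a statement from $\Hecke{1}$ to ${}_\mu\M_\nu$ via the DG models does not address this.

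For comparison, the paper's own (main-text) proof is the standard argument from \cite[Proposition 4.3.4]{bezrukavnikovKoszulDualityKacMoody2013}, cf.\ \cite[Appendix 5.1]{arkhipovPerverseSheavesAffine2020}: it uses the one-sided exactness of convolution with standard and costandard objects ($\DD_w\star_1(-)$ right t-exact, $\NN_w\star_1(-)$ left t-exact) together with a d\'evissage. Note that in your final expression the middle factor $\DD_{w_0,w_0\lambda}$ is convolved with two \emph{arbitrary} objects of ${}^pD^{\geq 0}$, so no single one-sided exactness statement applies directly; even with the correct inputs your argument does not close without that d\'evissage (or without the Vinberg/Radon route of the Appendix). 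A smaller point: $\NN_{w_0,\lambda}\star_1\DD_{w_0,w_0\lambda}\simeq\dd_{w_0\lambda}$ does not follow from ``uniqueness of a two-sided inverse'' --- in a monoidal category $X\star Y\simeq\Hu$ alone does not give $Y\star X\simeq\Hu$; it is the symmetric counterpart of Proposition \ref{braid_relations} \ref{item:20}, true and standard, but it must be quoted (it is part of the cited lemmas in \cite{bezrukavnikovKoszulDualityKacMoody2013} and their monodromic analogues) rather than derived formally from the one relation stated.
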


\begin{proof}
  This follows by a standard argument from the proof of \cite[Proposition 4.3.4]{bezrukavnikovKoszulDualityKacMoody2013}, using the exactness properties of convolution with the standard and costandard sheaves, cf. also \cite[Appendix 5.1]{arkhipovPerverseSheavesAffine2020}.
\end{proof}

For completeness, in the Appendix A we give another proof of Proposition \ref{sec:some-facts-about-1}, following an idea of V. Drinfeld, which is more explicit and does not use the exactness properties of convolution with the standard and costandard sheaves.

Denote by $\mathcal{H}_{w_0,\lambda}^\bullet(-)$ the cohomology functor with respect to the t-structure $({}_{\lambda}D^{\leq 0}_{w_0,\lambda}, {}_{\lambda}D^{\geq 0}_{w_0,\lambda})$. By Proposition \ref{sec:some-facts-about-1} we can equip the abelian category ${}_{\lambda}\Perv_{w_0,\lambda}$ with the truncated monoidal structure using the formula
\[
  X_1 \star_1^0 X_2 := \mathcal{H}_{w_0,\lambda}^0(X_1\star_1 X_2), X_1, X_2 \in {}_{\lambda}\Perv_{w_0,\lambda}.
\]

Let $\mathcal{Z}_D(\mathcal{A})$ stand for the Drinfeld center of a monoidal category $\mathcal{A}$.
We now give a new proof of \cite[Theorem 3.6]{bezrukavnikovCharacterDmodulesDrinfeld2012} and extend it to other sheaf-theoretic situations.

\begin{theorem}
  \label{sec:abel-categ-char}
Let $\mathfrak{o}=W\lambda$ for $\lambda \in \mathcal{C}(T)$. In settings \ref{set:c}-\ref{set:adic}, restriction of the functor $\operatorname{For}_T\hc$ to $\mathfrak{C}_{\mathfrak{o}}$ can be lifted to the equivalence of braided semigroupal categories \[\mathfrak{C}_{\mathfrak{o}} \rsim \mathcal{Z}_D({}_\lambda\Perv_{w_0,\lambda}). \]
\end{theorem}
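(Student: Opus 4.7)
The plan is to descend the derived equivalence $D^b_{\mathfrak{C}_\mathfrak{o}}(G) \simeq \mathcal{Z}\Hecke{1}_\lambda$ of Theorem \ref{sec:monodr-categ-char} to the abelian hearts, replacing the Hecke-centralizer category $\mathcal{Z}\Hecke{1}_\lambda$ by the classical Drinfeld center on the target side. The two essential t-exactness inputs are Proposition \ref{sec:monodr-categ-char-2}, which gives t-exactness of $\NN_{w_0,\lambda}\star \operatorname{For}_T\hc(-)$, and Proposition \ref{sec:some-facts-about-1}, which gives left t-exactness of $\star_1$ with respect to the shifted t-structure on ${}_\lambda\M_\lambda$.

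To construct the lift $\mathfrak{C}_\mathfrak{o} \to \mathcal{Z}_D({}_\lambda\Perv_{w_0,\lambda})$, I would first verify the target is as advertised: for $\F \in \mathfrak{C}_\mathfrak{o}$, Proposition \ref{sec:monodr-categ-char-2} yields $\NN_{w_0,\lambda}\star \operatorname{For}_T\hc(\F) \in \Perv$, and convolving on the left with $\DD_{w_0,w_0\lambda}$, using $\DD_{w_0,w_0\lambda}\star_1 \NN_{w_0,\lambda}\simeq \dd_\lambda$ from Proposition \ref{braid_relations}\ref{item:20}, places $\operatorname{For}_T\hc(\F)$ in $\DD_{w_0,w_0\lambda}\star \Perv = {}_\lambda\Perv_{w_0,\lambda}$. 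The half-braiding is then extracted from the central structure on $\tilde{a}(\F) \simeq -\star_1 \hc(\F)$: specializing the module-functor isomorphism $\mathcal{B} \bowtie (\mathcal{A}\star_1 \hc(\F)) \simeq (\mathcal{B} \bowtie \mathcal{A}) \star_1 \hc(\F)$ at $\mathcal{A} = \dd_\lambda$ and $\mathcal{B} = R(\mathcal{A}')$ (with $R$ from Lemma \ref{sec:centr-categ-as}) produces a natural isomorphism $\hc(\F) \star_1 \mathcal{A}' \simeq \mathcal{A}'\star_1 \hc(\F)$ in $\Hecke{1}_\lambda$ for every $\mathcal{A}'$. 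Forgetting $T$-equivariance and applying $\mathcal{H}^0_{w_0,\lambda}$, well-defined by Proposition \ref{sec:some-facts-about-1}, yields the desired half-braiding; extension to arbitrary (non-equivariant) objects of ${}_\lambda\Perv_{w_0,\lambda}$ is handled via the DG description of $\Hecke{1}_\lambda$ as $K_{T,\lambda}$-modules in ${}_\lambda\M_\lambda$ from Proposition \ref{sec:dg-models-mixed}.

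For fully faithfulness, I would combine fully faithfulness of $\tilde{a}_{\mathfrak{C}_\mathfrak{o}}$ (Theorem \ref{sec:monodr-categ-char}) with the observation that morphisms between perverse objects coincide in the abelian and derived categories, knowing that the forgetful functor $\mathcal{Z}_D(-) \to {}_\lambda\Perv_{w_0,\lambda}$ is faithful. For essential surjectivity, given $(Y,\gamma) \in \mathcal{Z}_D({}_\lambda\Perv_{w_0,\lambda})$, I would use the DG enhancements $\mathcal{P}_{w_0,\lambda,DG}^{(1)}$ and $\T^{(2)}_{\lambda,\lambda,DG}$ to lift $\gamma$ to a module-endofunctor structure on $-\star_1 Y$ in $\mathcal{Z}\Hecke{1}_\lambda$, then invoke the inverse of $\tilde{a}_{\mathfrak{C}_\mathfrak{o}}$ to obtain an object of $D^b_{\mathfrak{C}_\mathfrak{o}}(G)$ which automatically lies in the perverse heart $\mathfrak{C}_\mathfrak{o}$ because its image under $\operatorname{For}_T\hc$ is perverse. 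Braiding compatibility is then verified by tracing through both constructions: the braiding on $D^b(G/_{\Ad}G)$ is induced by the swap on the convolution diagram, which under the identification above corresponds to the canonical composition of half-braidings in $\mathcal{Z}_D$.

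The main obstacle I expect is the essential surjectivity, specifically the lifting of an abelian half-braiding $\gamma$ to a coherent module-endofunctor structure over $\Hecke{2}_{\lambda,\lambda}$ at the derived level. The abelian half-braiding only records commutativity data against objects of ${}_\lambda\Perv_{w_0,\lambda}$, while the derived datum in $\mathcal{Z}\Hecke{1}_\lambda$ encodes compatibility with the larger bimodule-style category $\Hecke{2}_{\lambda,\lambda}$ together with higher coherences; bridging the gap requires a careful DG-level argument exploiting t-exactness to show that the abelian compatibilities suffice to pin down the required derived module structure up to canonical isomorphism.
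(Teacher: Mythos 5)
Your architecture coincides with the paper's: descend Theorem \ref{sec:monodr-categ-char} to the hearts via Propositions \ref{sec:monodr-categ-char-2} and \ref{sec:some-facts-about-1}, obtain the half-braiding on $\operatorname{For}_T\hc(\F)$ by an argument parallel to Proposition \ref{sec:sheaves-group-as-1}~\ref{item:3}, and use the DG models of Proposition \ref{sec:dg-models-mixed} for the converse direction. However, the step you yourself flag as ``the main obstacle'' --- lifting $(Y,\gamma)\in\mathcal{Z}_D({}_\lambda\Perv_{w_0,\lambda})$ to a module endofunctor over $\Hecke{2}_{\lambda,\lambda}$ --- is precisely where the content of the proof lies, and you leave it as an announced difficulty rather than an argument. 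The paper fills it concretely, in three moves. First, the central isomorphism $Y\star_1\dd_\lambda\simeq\dd_\lambda\star_1 Y$ identifies the left and right monodromy actions, so $Y$ defines an object $\bar Y$ of $\mathcal{P}_{w_0,\lambda,DG}^{(1)}$ concentrated in degree $0$, i.e.\ it already carries the $K_{T,\lambda}$-action needed to enter the DG model. Second, for complexes of free-monodromic tilting objects $T_1,T_2,T_3$, convolution with tiltings is t-exact (Corollary \ref{sec:some-facts-about}), so $T_1\star_1 T_2\star_1 Y\star_1 T_3$ is a complex of objects of ${}_\lambda\Perv_{w_0,\lambda}$ and the abelian half-braiding $\gamma$ can be applied termwise, yielding the isomorphism of complexes \eqref{eq:14}. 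Third --- and this is exactly what answers your worry that $\gamma$ only records commutation against objects of ${}_\lambda\Perv_{w_0,\lambda}$ while a module structure over the larger category $\Hecke{2}_{\lambda,\lambda}$ is required --- every indecomposable free-monodromic tilting sheaf in the $\lambda$-monodromic Hecke category for $G^2$ is an external product $T_1\boxtimes T_3$, so in the tilting DG model the $\Hecke{2}$-action is generated by simultaneous left and right convolution with tiltings for $G$; hence \eqref{eq:14} determines a natural isomorphism $\bar A\bowtie(\bar X\star_1\bar Y)\to(\bar A\bowtie\bar X)\star_1\bar Y$ for all $\bar A\in\T^{(2)}_{\lambda,\lambda,DG}$, $\bar X\in\T^{(1)}_{\lambda,DG}$, and Proposition \ref{sec:dg-models-mixed} converts this into an object of the centralizer category. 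No higher coherences beyond naturality of \eqref{eq:14} arise, because $\mathcal{Z}\Hecke{1}$ is a $1$-categorical module-functor category over triangulated categories, so your concern on that score is less severe than you suggest --- but the tilting-complex mechanism above is the missing idea.

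A smaller point: your full-faithfulness argument is not sufficient as stated. Faithfulness of the forgetful functor $\mathcal{Z}_D(-)\to{}_\lambda\Perv_{w_0,\lambda}$ gives faithfulness of the lift, but fullness requires knowing that a morphism compatible with the abelian half-braidings is automatically compatible with the derived central structure --- which is again the same lifting problem. The paper avoids a separate discussion by introducing $\mathcal{Z}\Hecke{1}_{ab,\mathfrak{o}}\subset\mathcal{Z}\Hecke{1}_{\mathfrak{o}}$ (functors $F$ with $\operatorname{For}_T\varepsilon(F)$ perverse for the shifted t-structure), identifying $\mathfrak{C}_{\mathfrak{o}}$ with it via Theorem \ref{sec:monodr-categ-char} and Proposition \ref{sec:monodr-categ-char-2}, and then constructing an explicit inverse functor $g:\mathcal{Z}_D({}_\lambda\Perv_{w_0,\lambda})\to\mathcal{Z}\Hecke{1}_{ab,\mathfrak{o}}$ by the tilting-complex construction, so that fullness and essential surjectivity are obtained simultaneously.
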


\begin{proof}[Proof of Theorem \ref{sec:abel-categ-char}]
  Let $\mathcal{Z}\Hecke{1}_{ab,\mathfrak{o}}$ be the full subcategory of functors $F \in \mathcal{Z}\Hecke{1}_{\mathfrak{o}}$ satisfying $\operatorname{For}_T \circ \varepsilon(F) \in \oplus_{\mu \in \mathfrak{o}} {}_\lambda\Perv_{w_0,\mu}$. It follows from Theorem \ref{sec:monodr-categ-char} and Proposition \ref{sec:monodr-categ-char-2} that we have an equivalence of semigroupal categories $\mathfrak{C} \rsim \mathcal{Z}\Hecke{1}_{ab,\mathfrak{o}},$ induced by the functor $\tilde{a}$. 

  It suffices to show that $\operatorname{For}_T\circ\varepsilon$ induces an equivalence
  \[
    \mathcal{Z}\Hecke{1}_{ab,\mathfrak{o}} \rsim \mathcal{Z}_D({}_\lambda\Perv_{w_0,\lambda}).
  \]

  Indeed, the fact that $\operatorname{For}_T\circ\varepsilon\circ\tilde{a}(\F), \F \in \mathfrak{C}_{\mathfrak{o}},$ has a structure of an object in $\mathcal{Z}_D({}_\lambda\Perv_{w_0,\lambda})$ follows from an argument completely analogous to the one in to Proposition \ref{sec:sheaves-group-as-1} \ref{item:3}.

  We now construct the inverse functor $g:\mathcal{Z}_D({}_\lambda\Perv_{w_0,\lambda}) \to \mathcal{Z}\Hecke{1}_{ab,\mathfrak{o}}$. Take $Z \in \mathcal{Z}_D({}_\lambda\Perv_{w_0,\lambda})$. Looking at the central isomorphism $Z\star_1\dd_\lambda\to\dd_\lambda\star_1 Z$, we see that the left monodromy action on $Z$ coincides with the right one, so that $Z$ can be considered an object $\bar{Z}$ of $\Perv_{w_0,\lambda,DG}^{(1)}$ concentrated in the 0th homological degree. Let $T_1, T_2, T_3$ be three complexes of free monodromic tilting sheaves in ${}_\lambda\hat{\M}_\lambda$. The central structure on $Z$ gives an isomorphism
  \begin{equation}
    \label{eq:14}
    T_1\star_1 T_2 \star_1 Z \star_1 T_3 \to T_1 \star_1 T_2 \star_1 T_3 \star Z.
\end{equation}
  of complexes of objects in ${}_\lambda\Perv_{w_0,\lambda}$, by Corollary \ref{sec:some-facts-about}. Since any indecomposable free-monodromic tilting sheaf in the $\lambda$-monodromic Hecke category for $G^2$ has the form $T_1 \boxtimes T_3$ for free-monodromic tilting sheaves in the $T_1, T_3$  in ${}_\lambda\hat{\M}_\lambda$, isomorphism \eqref{eq:14} defines an isomorphism  
  \[
    \bar{A}\bowtie(\bar{X}\star_1 \bar{Z}) \to (\bar{A}\bowtie \bar{X})\star_1 \bar{Z}
  \]
  in $\Perv_{w_0,\lambda,DG}^{(1)}$ for objects $\bar{A}$ in $\T^{(2)}_{\lambda,\lambda,DG}$, $\bar{X}$ in $\T^{(1)}_{\lambda,DG}$, natural in $\bar{A},\bar{X}$. It follows, by Proposition \ref{sec:dg-models-mixed}, that convolution with $Z$ defines an object in $\mathcal{Z}\Hecke{1}_{ab,\mathfrak{o}}$, which completes the proof.

  The compatibility with the braiding follows by construction of the structure of the module endofunctor on $-\star_1(\hc \circ \tilde{a}^{-1}\circ g(Z))$ in Theorem \ref{sec:hecke-categ-char} (cf. Proposition \ref{sec:sheaves-group-as-1} \ref{item:3}). 
\end{proof}
\subsection{Vanishing result for central sheaves on a torus.}
We return to an arbitrary setting \ref{set:c} - \ref{set:mixed}.

Consider the closed embedding $i\colon T\hookrightarrow \mathcal{Y}$ given by $t\mapsto (U, tU)$, which passes to $i\colon T/_{\mathrm{Ad}}T\hookrightarrow G\backslash \mathcal{Y}$ after taking quotients. 
Note that the category of  the unipotent free-monodromic sheaves on $T$ is equivalent to the category of $S_{T,\lambda}$-modules.
The action of $W$ on $T$ induces an action of the stabilizer $W_{\lambda,\lambda}\subset W$ of $\lambda$ on $S_{T,\lambda}$. . Let $W_\lambda^\circ\subset W_{\lambda,\lambda}$ be the subgroup generated by simple reflections $s_\alpha$ such that $\alpha(\lambda)=1$.

\begin{proposition}
  \label{sec:dg-models-monodromic}
Let $M$ be an $S_{T,\lambda}$-module and let $\mathcal{F}$ be the corresponding sheaf on $T$. Then $i_*\mathcal{F}$ is in ${}_\lambda\mathcal{P}_{w_0,\lambda}$ and admits a central structure in $\mathcal{Z}_D({}_\lambda\mathcal{P}_{w_0,\lambda})$ if and only if $M$ admits an action of the semidirect product ring $W_{\lambda,\lambda}\#S_{T,\lambda}$ and descends to an $S_{T,\lambda}^{W_\lambda^\circ}$-module compatibly with the action of $W_\lambda^\circ\subset W_{\lambda,\lambda}$, i.e. there exists an $S_{T,\lambda}^{W_\lambda^\circ}$-module  $M_0$, such that $M\cong M_0\otimes_{S_{T,\lambda}^{W_\lambda^\circ}}S_{T,\lambda}$ and the action of $W_\lambda^\circ\subset W_{\lambda,\lambda}$ is given by the natural action on the second factor.
\end{proposition}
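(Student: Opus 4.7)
The plan is to analyze the Drinfeld centrality condition on $i_*\mathcal{F}$ by testing the central structure against standard and costandard free-monodromic sheaves, and then to translate the resulting constraints into algebraic data on $M$. First I would verify that $i_*\mathcal{F}$ lies in ${}_\lambda\mathcal{P}_{w_0,\lambda}$: the image of $i$ is the closed Weyl cell corresponding to $e\in W$ in $\mathcal{Y}$, and the shifted perverse t-structure defined via $\DD_{w_0,w_0\lambda}\star_1-$ is calibrated precisely so that the pushforward of a rank one local system on $T$ (the object $\dd_\lambda$) is perverse; the general case follows because any $S_{T,\lambda}$-module corresponds to a pro-iteration built out of $\dd_\lambda$.

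Next, given a central structure on $Z:=i_*\mathcal{F}$, for each $w\in W_{\lambda,\lambda}$ I would evaluate the central isomorphism
\[
\gamma_{\DD_{w,\lambda}}\colon Z\star_1^0\DD_{w,\lambda}\xrightarrow{\sim}\DD_{w,\lambda}\star_1^0 Z.
\]
A direct computation with the convolution diagram \eqref{eq:7}, restricted along $i$, shows that both sides are supported on the $w$-Weyl cell and that the associated $S_{T,\lambda}$-modules are $M$ and $w^*M$ respectively, where $w^*$ denotes transport along $w\colon T\to T$. Thus $\gamma_{\DD_{w,\lambda}}$ becomes an $S_{T,\lambda}$-linear map $M\to w^*M$, i.e.\ a degree-$w$ operator on $M$ twisting the $S_{T,\lambda}$-action. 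The hexagon axiom, combined with the composition rule $\DD_{v,w\lambda}\star_1\DD_{w,\lambda}\simeq \DD_{vw,\lambda}$ from Proposition \ref{braid_relations} \ref{item:19}, forces these operators to compose strictly, producing an action of the semidirect product ring $W_{\lambda,\lambda}\#S_{T,\lambda}$ on $M$.

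For the descent condition, I would focus on simple reflections $s_\alpha$ with $\alpha(\lambda)=1$. In contrast to the minimal-element case used in Proposition \ref{sec:equiv-monodr-categ-1}, for $s_\alpha\in W_\lambda^\circ$ the canonical morphism $\DD_{s_\alpha,\lambda}\to\NN_{s_\alpha,\lambda}$ is \emph{not} an isomorphism; its cone controls the wall behavior. Centrality yields compatible isomorphisms $\gamma_{\DD_{s_\alpha,\lambda}}$ and $\gamma_{\NN_{s_\alpha,\lambda}}$ that must fit into a commutative square over this canonical map. Unwinding this square in terms of $M$, using the explicit shape of $\DD_{s_\alpha}\star_1 i_*M$ versus $\NN_{s_\alpha}\star_1 i_*M$, forces the $s_\alpha$-twisted $S_{T,\lambda}$-module structure to descend: since $S_{T,\lambda}$ is free of rank $|W_\lambda^\circ|$ over $S_{T,\lambda}^{W_\lambda^\circ}$ (as $W_\lambda^\circ$ is a reflection group), this compatibility is equivalent to $M\cong M_0\otimes_{S_{T,\lambda}^{W_\lambda^\circ}}S_{T,\lambda}$ for some $S_{T,\lambda}^{W_\lambda^\circ}$-module $M_0$ with the natural $W_\lambda^\circ$-action.

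Conversely, given the algebraic data, I would construct the central structure by defining the isomorphisms $\gamma_{\DD_{w,\lambda}}$, $\gamma_{\NN_{w,\lambda}}$ explicitly from the $W_{\lambda,\lambda}\#S_{T,\lambda}$-action and the descent data, working in the DG model $\mathcal{P}_{w_0,\lambda,DG}^{(1)}$ of Subsection \ref{sec:dg-models-monodromic-1} where convolutions with free-monodromic tilting sheaves are strictly defined. Extension to all of ${}_\lambda\mathcal{P}_{w_0,\lambda}$ then follows because free-monodromic standards and costandards generate the monoidal category (so a central structure is determined by and can be built from its values on them). The main obstacle I anticipate is the wall step: precisely identifying the constraint imposed by the non-invertible map $\DD_{s_\alpha,\lambda}\to\NN_{s_\alpha,\lambda}$ in terms of the $S_{T,\lambda}^{W_\lambda^\circ}$-descent, and showing that this constraint is not only necessary but sufficient to build (and glue) the central isomorphisms coherently on all of $\mathcal{P}_{w_0,\lambda,DG}^{(1)}$.
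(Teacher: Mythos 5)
Your route (testing the central structure directly against (co)standard objects and extracting the descent condition from the wall map $\DD_{s_\alpha,\lambda}\to\NN_{s_\alpha,\lambda}$) is genuinely different from the paper's, which instead passes through Gouttard's description of the tilting subcategories of all blocks of ${}_\lambda\hat{\M}_\lambda$ as Soergel bimodules over $S_{T,\lambda}\otimes_{S_{T,\lambda}^{W_\lambda^\circ}}S_{T,\lambda}$ (with twisted right action for non-neutral blocks), reduces to central structures on Soergel bimodules via tilting resolutions of objects of ${}_\lambda\Perv_{w_0,\lambda}$, and then argues purely module-theoretically: for the converse, the canonical chain $M\otimes_{S_{T,\lambda}}N\simeq M_0\otimes_{S_{T,\lambda}^{W_\lambda^\circ}}N\simeq N\otimes_{S_{T,\lambda}}M$ supplies the central isomorphisms against every Soergel bimodule $N$ at once; for the forward direction, centrality against the single bimodule $S_{T,\lambda}\otimes_{S_{T,\lambda}^{W_\lambda^\circ}}S_{T,\lambda}$ yields an isomorphism $M\otimes_{S_{T,\lambda}^{W_\lambda^\circ}}S_{T,\lambda}\simeq S_{T,\lambda}\otimes_{S_{T,\lambda}^{W_\lambda^\circ}}M$ satisfying a cocycle condition, and descent follows by flat descent, while the $W_{\lambda,\lambda}$-action comes from centrality against the twisted diagonal bimodules indexing the blocks.

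The gap in your proposal is precisely the step you flag but do not resolve, and it is not a minor technicality. In the converse direction, a Drinfeld-central structure requires natural isomorphisms $Z\star_1^0 X\simeq X\star_1^0 Z$ for \emph{every} object and every morphism of ${}_\lambda\Perv_{w_0,\lambda}$, coherent with $\star_1^0$; the assertion that this ``follows because free-monodromic standards and costandards generate the monoidal category'' does not work, since triangulated/monoidal generation by (co)standards gives no mechanism for extending the isomorphisms naturally to arbitrary perverse objects and their morphisms. Some resolution-type argument through convolution-exact objects is needed -- this is exactly why the paper works with tilting resolutions and the full identification of the tilting subcategory with Soergel bimodules. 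In the forward direction your computation is also looser than claimed: $\DD_{w,\lambda}$ and $\NN_{w,\lambda}$ are pro-objects, not objects of the heart, so one must first explain how a central structure on the abelian category is evaluated on them; moreover for $w\in W_\lambda^\circ$ the convolution $Z\star_1^0\DD_{w,\lambda}$ is not supported on a single cell with module simply $M$, so ``unwinding the square'' over $\DD_{s_\alpha,\lambda}\to\NN_{s_\alpha,\lambda}$ into the statement that $M$ descends requires an actual descent argument (an isomorphism $M\otimes_{S_{T,\lambda}^{W_\lambda^\circ}}S_{T,\lambda}\simeq S_{T,\lambda}\otimes_{S_{T,\lambda}^{W_\lambda^\circ}}M$ \emph{together with} a cocycle condition, and a way to pass from wall-by-wall data for the $s_\alpha$ to descent along the whole invariant ring $S_{T,\lambda}^{W_\lambda^\circ}$), none of which is supplied. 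As written, the proposal is a plausible plan in the spirit of the original D-module argument, but the decisive implications in both directions remain unproved.
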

\begin{proof}
  The fact that $i_*\mathcal{F}$ is in ${}_\lambda\mathcal{P}_{w_0,\lambda}$ follows from base change and t-exactness of the $*$-pushforward from an affine open subset. 

Note that the objects of the form $i_*\mathcal{F}$ lie in the neutral block of ${}_\lambda\hat{\M}_\lambda$ (see \cite[Section 4]{lusztigEndoscopyHeckeCategories2020a} for the definition).  By 
\cite[Theorem 13.1.1.1]{gouttardPerverseMonodromicSheaves2021}
there is a fully faithful monoidal functor from the category of tilting objects in the neutral block of ${}_\lambda\hat{\M}_\lambda$ to the category of ${S_{T,\lambda}\otimes_{S_{T,\lambda}^{W_\lambda^\circ}}S_{T,\lambda}}$-modules with the monoidal structure given by $-\otimes_ {S_{T,\lambda}}-$. The essential image of this functor is the category of Soergel bimodules. By construction, objects supported on the image of $i$ under this functor correspond to the $S_{T,\lambda}\otimes_{S_{T,\lambda}^{W_\lambda^\circ}}S_{T,\lambda}$-modules supported on the diagonal. More precisely, $i_*\mathcal{F}$ goes to $M$ with  $S_{T,\lambda}\otimes_{S_{T,\lambda}^{W_\lambda^\circ}}S_{T,\lambda}$ acting via the multiplication map. 

The other blocks are enumerated by the coset $W_{\lambda,\lambda}/W_\lambda^\circ$ and the subcategory  of tilting objects in the block corresponding to $\beta\in W_{\lambda,\lambda}/W_\lambda^\circ$ are equivalent to the category of Soergel bimodules with the right action of $S_{T,\lambda}$ twisted by the shortest representative of the coset $\beta$ (\cite[Theorem 13.1.2.1]{gouttardPerverseMonodromicSheaves2021}).

Since each objects in ${}_\lambda\mathcal{P}_{w_0,\lambda}$ admits a tilting resolution (as follows, for example, from \cite[Theorem 4.6.1]{gouttardPerverseMonodromicSheaves2021}) it is sufficient to study central Soergel bimodules. If $M$ descends to an $S_{T,\lambda}^{W_\lambda^\circ}$-module $M_0$ and $N$ is any $S_{T,\lambda}\otimes_{S_{T,\lambda}^{W_\lambda^\circ}}S_{T,\lambda}$-module, 
 then there are canonical isomorphisms 
\[
M\otimes_{S_{T,\lambda}}N\simeq M_0\otimes_{S_{T,\lambda}^{W_\lambda^\circ}} N\simeq N\otimes_{S_{T,\lambda}^{W_\lambda^\circ}} M_0\simeq N\otimes_{S_{T,\lambda}}M,
\]
satisfying all required compatibilities. Furthermore, the $W_{\lambda,\lambda}$ action on $M$ provides the central morphism with $N$ being the diagonal bimodule over $S_{T,\lambda}$ with the right action twisted by an element $\beta\in W_{\lambda,\lambda}$. Together these are sufficient to endow $i_*\mathcal{F}$ with the desired central structure.

Conversely, if $i_*\mathcal{F}$ admits a central structure, since $S_{T,\lambda}\otimes_{S_{T,\lambda}^{W_\lambda^\circ}}S_{T,\lambda}$ is a Soergel bimodule (see \cite[Lemma 10.4.9]{gouttardPerverseMonodromicSheaves2021}) we have an isomorphism of $S_{T,\lambda}\otimes_{S_{T,\lambda}^{W_\lambda^\circ}}S_{T,\lambda}\otimes_{S_{T,\lambda}^{W_\lambda^\circ}}S_{T,\lambda}$-modules
\begin{multline*}
M\otimes_{S_{T,\lambda}^{W_\lambda^\circ}}S_{T,\lambda}\cong M\otimes_{S_{T,\lambda}}(S_{T,\lambda}\otimes_{S_{T,\lambda}^{W_\lambda^\circ}}S_{T,\lambda})  \xrightarrow{\sim}\\(S_{T,\lambda}\otimes_{S_{T,\lambda}^{W_\lambda^\circ}}S_{T,\lambda})\otimes_{S_{T,\lambda}}M\cong S_{T,\lambda}\otimes_{S_{T,\lambda}^{W_\lambda^\circ}}M.
\end{multline*}
Moreover, this map satisfies the cocycle condition for the composition 
\begin{multline*}
  M\otimes_{S_{T,\lambda}^{W_\lambda^\circ}}S_{T,\lambda}\otimes_{S_{T,\lambda}^{W_\lambda^\circ}}S_{T,\lambda}\xrightarrow{\sim}S_{T,\lambda}\otimes_{S_{T,\lambda}^{W_\lambda^\circ}}M\otimes_{S_{T,\lambda}^{W_\lambda^\circ}}S_{T,\lambda}\xrightarrow{\sim}\\
  S_{T,\lambda}\otimes_{S_{T,\lambda}^{W_\lambda^\circ}}S_{T,\lambda}\otimes_{S_{T,\lambda}^{W_\lambda^\circ}}M.
\end{multline*}
This ensures the descent of $M$ to an $S_{T,\lambda}^{W_\lambda^\circ}$-module. Finally, the central morphism with $N$ being the diagonal bimodule over $S_{T,\lambda}$ with the right action twisted by an element $\beta\in W_{\lambda,\lambda}$ provides the action of $W_{\lambda,\lambda}$ on $M$.
\end{proof}
Assume that $\F \in D^b(T)$ satisfies one of the equivalent conditions of Proposition \ref{sec:dg-models-monodromic}. Then there is a $W_{\lambda,\lambda}$-equiavriant structure on $\F$ coming from the $W_{\lambda,\lambda}$-action on $M$. As above, let $\mathfrak{o}=W\lambda$ and let $\mathrm{av}_\mathfrak{o}\mathcal{F}\in \bigoplus_{\mu\in\mathfrak{o}}{}_\mu\mathcal{P}_{w_0,\mu}$ be the sum of $w^*\mathcal{F}$ with $w$ being a set of representatives of the coset $W/W_{\lambda,\lambda}$. This sheaf admits a natural $W$-equivariant structure. 

This gives a $W$-action on  $\chi (i_*\mathrm{av}_\mathfrak{o}\F) = \operatorname{Ind}_B^G(\mathrm{av}_\mathfrak{o}\F)$, where $\operatorname{Ind}_B^G(-)$ stands for the functor of parabolic induction. We have the following Corollary of the results in this section, proved in \cite[Theorem 6.1]{chenConjectureBravermanKazhdan2022} with a restriction on characteristic. 
\begin{corollary}
The sheaf $\hc(\operatorname{Ind}_B^G(\mathrm{av}_\mathfrak{o}\F)^W)$, where $(-)^W$ stands for the functor of $W$-invariants, is supported on the preimage of the diagonal under the quotient map $Y^2 \to (G/B)^2$.  
\end{corollary}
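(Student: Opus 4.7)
The plan is to prove the stronger statement that
\[
\hc(\operatorname{Ind}_B^G(\mathrm{av}_\mathfrak{o}\F)^W) \simeq i_*\mathrm{av}_\mathfrak{o}\F,
\]
which is manifestly supported on the $G$-orbit of the image of $i:T\hookrightarrow\yy$, and hence in the preimage of the diagonal under $\yy\to (G/B)^2$. The key tool will be the Corollary stating that $\tilde{a}^{-1}\simeq(\chi\circ\varepsilon)^W$, applied to the central object associated to $i_*\F$.

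I would first observe, by a direct diagram chase from the definitions of $\chi$, $i$ and parabolic induction, that there is a natural isomorphism $\chi\circ i_*\simeq\operatorname{Ind}_B^G$ of functors $D^b(T)\to D^b(G/_{\Ad}G)$. Next, by Proposition \ref{sec:dg-models-monodromic}, the sheaf $i_*\F\in{}_\lambda\Perv_{w_0,\lambda}$ carries a central structure in $\mathcal{Z}_D({}_\lambda\Perv_{w_0,\lambda})$. Running Theorem \ref{sec:abel-categ-char} in reverse produces a unique character sheaf $X\in\mathfrak{C}_\mathfrak{o}$ whose image under $\operatorname{For}_T\hc$ recovers $i_*\F$ in the $\lambda$-block; the $W$-equivariance built into the averaging construction then assembles the remaining blocks and yields $\hc(X)\simeq i_*\mathrm{av}_\mathfrak{o}\F$ in $\Hecke{1}_\mathfrak{o}$, the $T$-equivariant structure being canonical for sheaves pushed forward from $T$ under $i$.

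Setting $Z:=\tilde{a}(X)\in\mathcal{Z}\Hecke{1}$, one has $\varepsilon(Z)\simeq\hc(X)\simeq i_*\mathrm{av}_\mathfrak{o}\F$. The Corollary following Theorem \ref{sec:hecke-categ-char} then supplies
\[
X\simeq\tilde{a}^{-1}(Z)\simeq(\chi\circ\varepsilon)^W(Z)\simeq(\chi(i_*\mathrm{av}_\mathfrak{o}\F))^W\simeq \operatorname{Ind}_B^G(\mathrm{av}_\mathfrak{o}\F)^W.
\]
Applying $\hc$ and using $\hc(X)\simeq i_*\mathrm{av}_\mathfrak{o}\F$ produces the claimed identification.

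The main obstacle will be matching the two $W$-actions on $\operatorname{Ind}_B^G(\mathrm{av}_\mathfrak{o}\F)$: the one arising from the identification $\tilde{a}^{-1}\simeq(\chi\circ\varepsilon)^W$, which ultimately traces back to the $W$-action on the Springer sheaf $\Spgr$ via Lemma \ref{sec:springer-comonad-4}, and the tautological one coming from the $W$-equivariant structure on $\mathrm{av}_\mathfrak{o}\F$. This compatibility must be tracked carefully through the equivalences of Theorems \ref{sec:hecke-categ-char} and \ref{sec:abel-categ-char} and through the construction of $X$ from $i_*\F$ in Proposition \ref{sec:dg-models-monodromic}.
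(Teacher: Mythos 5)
Your proposal is correct and follows essentially the same route as the paper: Proposition \ref{sec:dg-models-monodromic} together with Theorem \ref{sec:abel-categ-char} produce a character sheaf $\mathcal{E}$ with $\hc(\mathcal{E})\simeq i_*\mathrm{av}_\mathfrak{o}\F$, and the identification $\mathcal{E}\simeq(\operatorname{Ind}_B^G\mathrm{av}_\mathfrak{o}\F)^W$ comes from the $W$-action on $\chi\circ\hc\simeq\Spgr\star(-)$ with $\Spgr^W\simeq\delta_e$ -- your detour through the corollary $\tilde{a}^{-1}\simeq(\chi\circ\varepsilon)^W$ is just a repackaging of that same computation. The one point you flag but leave open, matching the Springer-derived $W$-action with the tautological one on $\operatorname{Ind}_B^G(\mathrm{av}_\mathfrak{o}\F)$, is exactly what the paper settles by citing \cite[Proposition 5.4.6]{hhh}; the paper also first reduces setting \ref{set:mixed} to settings \ref{set:c}--\ref{set:adic}, since Theorem \ref{sec:abel-categ-char} is only available there, a step your argument should include as well.
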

\begin{proof}
  It is enough to prove the statement in settings \ref{set:c}-\ref{set:adic}.  By Proposition \ref{sec:dg-models-monodromic} and  Theorem \ref{sec:abel-categ-char}, we see that 
  there is an object $\mathcal{E}$ in $\mathfrak{C}$ with $\hc(\mathcal{E}) \simeq \mathrm{av}_\mathfrak{o}\F$. We have \[\operatorname{Ind}_B^G\mathrm{av}_\mathfrak{o}\F \simeq \chi (i_*\mathrm{av}_\mathfrak{o}\F) \simeq \chi\circ\hc(\mathcal{E}) \simeq \Spgr\star\mathcal{E},\]
  where the first isomorphism holds because $i_*\mathrm{av}_\mathfrak{o}\F$ is supported over the diagonal in $(G/B)^2$, and the last one from Lemma \ref{sec:springer-comonad-4}. We get that $\mathcal{E} \simeq (\operatorname{Ind}_B^G(\mathrm{av}_\mathfrak{o}\F))^W$, where invariants are taken with respect to the $W$-action on the Springer sheaf, where the action is chosen so that $\delta_e \simeq \Spgr^W$. The compatibility of this action and the action used in the formulation of the Proposition is proved in \cite[Proposition 5.4.6]{hhh}.
\end{proof}
\appendix
\section{Sheaves on Vinberg semigroup}
\label{sec:sheav-vinb-semigr}
The aim of this appendix is to give a different proof of Proposition \ref{sec:some-facts-about-1}, namely the fact that the convolution $\star_1$ is left t-exact with respect to the t-structure $(D^{\leq 0}_{w_0}, D^{\geq 0}_{w_0})$, following an idea of Drinfeld.
\subsection{General facts about convolution on semigroups.}
  Let $V$ be an algebraic semigroup, and let $Z \subset V$ be a closed two-sided ideal (i.e. $Z$ is a closed algebraic subset of $V$ which is a two-sided ideal with respect to the semigroup product). Let $\mathring{V} \subset V$ be the open complement of $Z$. 

  The category $D^b(V)$ is semigroupal (monoidal if $V$ is unital), with convolution given by the formula
  \[
    \F \star_V \mathcal{G} := m_!(\F \boxtimes \mathcal{G}),
  \]
  where $m$ stands for the multiplication map $m: V \times V \to V$.

  Since the map $m$ is affine, the convolution $\star_V$ is left t-exact with respect to the perverse t-structure, by Artin's theorem. 

  Write $C \subset V \times V$ for the preimage $m^{-1}(\mathring{V})$ of $\mathring{V}$ under the multiplication map. Not that, since $Z$ is a two-sided ideal, $C \subset \mathring{V} \times \mathring{V}$.
 
  Consider a diagram
  \begin{equation}
    \label{eq:16}
  \begin{tikzcd}
    & C \arrow{r}{m'} \arrow{dl}{\pi'_1} \arrow[dr, "\pi'_2"'] & \mathring{V}  \\
    \mathring{V} & & \mathring{V}
  \end{tikzcd}
\end{equation}
  where $\pi'_1, \pi'_2, m'$ are restrictions of projections $\pi_1,\pi_2:V^2 \to V$ and multiplication map $m:\mathring{V}^2 \to V$ to $C$, respectively. 

  The category $D^b(\mathring{V})$ is equipped with "convolution" operation
  \begin{equation}
    \label{eq:17}
   \F \ustar \mathcal{G} := m'_!(\pi_1'^*\F \otimes \pi_2'^*\G).
\end{equation} 
 
  We will use the following:
  \begin{lemma}
    \label{sec:some-facts-about-2}
    The operation $\ustar$ is left t-exact with respect to the perverse t-structure.
  \end{lemma}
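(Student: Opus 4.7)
The plan is to realise $\ustar$ on $D^b(\mathring V)$ as a restriction of the ambient semigroupal product $\star_V$, and then transfer left t-exactness through a short chain of open-immersion/affine-morphism t-exactness properties.

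First I would verify that the square
\[
\begin{tikzcd}
C \arrow[r, "m'"] \arrow[d, hook, "\tilde{\imath}"'] & \mathring V \arrow[d, hook, "j"] \\
V \times V \arrow[r, "m"'] & V
\end{tikzcd}
\]
is Cartesian, where $j\colon \mathring V \hookrightarrow V$ and $\tilde{\imath}\colon C \hookrightarrow V\times V$ are the open immersions. This is immediate from $C = m^{-1}(\mathring V)$, and the fact that $Z$ being a two-sided ideal forces $\tilde{\imath}$ to factor through $\mathring V \times \mathring V$ with $\pi_k \circ \tilde{\imath} = j \circ \pi_k'$.

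Next, for $\F,\G \in D^b(\mathring V)$ I would establish the canonical isomorphism
\[
  \F \ustar \G \;\cong\; j^*\bigl(j_*\F \,\star_V\, j_*\G\bigr).
\]
Proper base change along the Cartesian square gives $j^* m_! \cong m'_!\, \tilde{\imath}^*$. Applying this to $j_*\F \boxtimes j_*\G$ and using $j^*j_* \cong \Id$ (valid because $j$ is an open immersion), the pullback along $\tilde{\imath}$ becomes $\pi_1'^*\F \otimes \pi_2'^*\G$, which after applying $m'_!$ is exactly $\F \ustar \G$ by definition \eqref{eq:17}.

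Finally, left t-exactness is a three-step chain applied to $\F,\G \in {}^pD^{\geq 0}(\mathring V)$: (i) $j_*$ is left t-exact for the open immersion $j$, so $j_*\F, j_*\G \in {}^pD^{\geq 0}(V)$; (ii) $\star_V$ is left t-exact by Artin's theorem (as already noted, because $m$ is affine), so $j_*\F \star_V j_*\G \in {}^pD^{\geq 0}(V)$; (iii) $j^*$ is t-exact, so the result lies in ${}^pD^{\geq 0}(\mathring V)$. I do not anticipate a serious obstacle: the only subtle point is the deliberate choice of $j_*$ rather than $j_!$ in the intermediate step, so that \emph{left} t-exactness propagates — the analogous identity $\F \ustar \G \cong j^*(j_!\F \star_V j_!\G)$, which also holds by base change, would only have yielded right t-exactness.
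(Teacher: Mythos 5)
Your proposal is correct, and it actually shortens the argument given in the paper. The paper's proof establishes, by the same kind of base-change computation you carry out, the identity $\F \ustar \G \rsim j^*(j_!\F \star_V j_!\G)$ (its isomorphism \eqref{eq:5}), and then invokes the two-sided ideal hypothesis a second time: using the triangles $i_*i^!j_!\F \to j_!\F \to j_*\F$ it checks that all discrepancy terms are supported on $Z$, hence killed by $j^*$, so that $j^*(j_!\F \star_V j_!\G) \rsim j^*(j_*\F \star_V j_*\G)$; only then does it run the exactness chain. You instead prove the $j_*$-version of the identity directly: base change $j^*m_! \simeq m'_!\,\tilde{\imath}^*$ together with the factorization of $\tilde{\imath}$ through $\mathring V \times \mathring V$ (which is where the ideal hypothesis enters for you, and it is needed only there) gives $\tilde{\imath}^*(j_*\F \boxtimes j_*\G) \simeq \pi_1'^*\F \otimes \pi_2'^*\G$ because $j^*j_* \simeq \Id$, and this bypasses the triangle/support step entirely; the concluding chain ($j_*$ and $\star_V$ left t-exact, $j^*$ t-exact) is identical to the paper's. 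The only inaccuracy is your closing parenthetical: the $j_!$-identity would not ``yield right t-exactness'' of $\ustar$ --- it composes a right t-exact functor ($j_!$) with a left t-exact one ($\star_V$), so by itself it gives no conclusion in either direction, which is precisely why the paper inserts the comparison $j^*(j_!\F \star_V j_!\G) \simeq j^*(j_*\F \star_V j_*\G)$ that your argument renders unnecessary.
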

  \begin{proof}
    Let $j:\mathring{V} \to V, i: Z \to V$ be the open and complementary closed embeddings. Using base change and straightforward diagram chase, one obtains an isomorphism
    \begin{equation}
      \label{eq:5}
      \F \ustar \G  \rsim j^*(j_!\F \star_V j_! \G).
    \end{equation}
    Consider the canonical distinguished triangles
    \[
      i_*i^!j_!\F \to j_!\F \to j_*\F \to i_*i^!j_!\F[1],
    \]
    \[
      i_*i^!j_!\G \to j_!\G \to j_*\G \to i_*i^!j_!\G[1].
    \]
    Since $Z$ is a two-sided monoidal ideal, sheaves
    \[
      i_*i^!j_!\F \star_V j_*\G,  j_*\F \star_V i_*i^!j_!\G, i_*i^!j_!\F \star_V i_*i^!j_!\G
    \]
    are supported on $Z$. It follows that we have an isomorphism
    \[
      j^*(j_!\F \star_V j_!\G) \rsim j^*(j_*\F \star_V j_*\G).
    \]
    Now note that functors since $j_*, \star_V$ are left t-exact and $j^*$ is $t$-exact, we get the result from the isomorphism \eqref{eq:5}.
  \end{proof}
\subsection{Vinberg semigroup and the horocycle space.}
We recall some facts about the Vinberg semigroup, following mostly the exposition of \cite{drinfeldGeometricConstantTerm2016b}, and refer the reader there for further bibliography.

Let $G, B, U, T$ be as in the main text, and let $U^-$ stand for a maximal unipotent subgroup opposite to $U$. Let $Z(G)$ stand for the center of $G$. Attached to $G$ there is an affine semigroup $\barGe$, whose group of invertible elements is $\Ge = (G \times T)/Z(G)$. $G$ embeds into $\Ge$, and so $G \times G$ acts on $\barGe$ by multiplication on left and right: $(g_1, g_2)\cdot x = g_1xg_2^{-1}$. In loc. cit., an open subset $\obarGe$ in $\barGe$, such that the complement $\barGe \backslash \obarGe$ is a two-sided ideal, is defined. Let $r$ stand for the semisimple rank of $G$. Let \[\Ta = T/Z(G),\enskip \barTa \simeq \mathbb{A}^r \supset \Ta.\] The semigroup $\barGe$ comes equipped with a homomorphism of semigroups \[\bar{\pi}: \barGe \to \barTa\] and a section \[\bar{\mathfrak{s}}: \barTa \to \obarGe \subset \barGe.\]    

Let $V$ be the fiber over $0 \in \barTa \simeq \mathbb{A}^r$. This is an affine semigroup. Let $\mathring{V} = \obarGe \cap V$, open subset of $\mathring{V}$. Since both $V$ and $\barGe \backslash \obarGe$ are two-sided ideals in $\barGe$, we get that $Z := V \backslash \mathring{V}$ is a two-sided ideal in $V$, so that we are in the situation of previous subsection and Lemma \ref{sec:some-facts-about-2}. 

The proposition below follows directly from the constructions in the \cite[Appendix D]{drinfeldGeometricConstantTerm2016b}.
\begin{proposition}
  \label{sec:vinb-semigr-horocycl}
  \begin{enumerate}[label=\alph*),ref=(\alph*)]
  \item \label{item:14} Open subset $\mathring{V} \subset V$ is the $G \times G$ orbit of $\bar{\mathfrak{s}}(0)$.
  \item \label{item:15} Stabilizer of $\bar{\mathfrak{s}}(0)$ in $G \times G$ is $U \times U^- \times T$, embedded as $(u,u',t) \mapsto (ut,u't)$, so that \[\mathring{V} \simeq \frac{G/U\times G/U^-}{T},\] 
    with $T$ acting on $G/U \times G/U^-$ diagonally on the right.
  \item \label{item:16} The preimage $C$ of $\mathring{V}$ in $\mathring{V} \times \mathring{V}$ under the multiplication map is identified with
    \[
      C = \{((x_1U,x_2U^{-})T,(x_3U,x_4U^-)T) \in \mathring{V}^2 : x_2^{-1}x_3 \in U^{-}B\}.
    \]
  \end{enumerate}
\end{proposition}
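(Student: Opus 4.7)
The strategy is to unpack the concrete construction of $\barGe$, $\obarGe$ and the section $\bar{\mathfrak{s}}$ given in \cite[Appendix D]{drinfeldGeometricConstantTerm2016b}, and then verify each of the three assertions by direct inspection. This is in line with the author's remark that the result follows ``directly'' from that reference; however, it is worth spelling out what ``directly'' means, since the proposition packages together a transitivity statement, a stabilizer computation, and an open-cell description of a fiber product.

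For the first step I would recall the fundamental-representation model: up to replacing $G$ by a simply connected cover, one embeds $\Ge$ into $\prod_i \operatorname{End}(V_{\omega_i})$ (indexed by the fundamental weights $\omega_i$) via the natural action of $G$ together with the scaling by the extra torus factor. The semigroup $\barGe$ is the closure of this image, and $\obarGe$ is carved out by the condition that the induced map on each highest-weight line is nonzero. The section $\bar{\mathfrak{s}}$ sends $t\in\barTa$ to the semisimple element acting diagonally with respect to the weight filtration. In particular, $\bar{\mathfrak{s}}(0)$ is a collection of rank-one projectors onto highest-weight lines, which visibly lies in $V\cap\obarGe=\mathring{V}$.

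For part \ref{item:14}, any point of $\mathring{V}$ is, in every factor $\operatorname{End}(V_{\omega_i})$, a rank-one operator whose image is a highest-weight line for some Borel and whose kernel is the corresponding invariant hyperplane; the $G\times G$ action acts transitively on such data, giving transitivity. Part \ref{item:15} is then a direct stabilizer computation: $(g_1,g_2)\in G\times G$ fixes $\bar{\mathfrak{s}}(0)$ iff $g_1\in B=TU$ preserves the highest-weight line in each $V_{\omega_i}$ and $g_2\in B^-=TU^-$ preserves the corresponding hyperplane, with matching scalar actions on the chosen lines; enforcing the scalar match modulo $Z(G)$ produces exactly the embedding $(u,u',t)\mapsto(ut,u't)$ of $U\times U^-\times T$, and a dimension count confirms this exhausts the stabilizer.

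For part \ref{item:16}, I would compute the product $v_1 v_2\in V$ in the fundamental-representation model as a composition of rank-one operators: a rank-one operator ``attached to $x_2$'' composed with one ``attached to $x_3$'' is nonzero on the highest-weight line precisely when the pairing of the corresponding $U^-$-line with the corresponding $U$-line is nonzero, i.e., when $x_2^{-1}x_3$ lies in the open Bruhat cell $U^- B$. The main obstacle in making this self-contained is precisely verifying that the semigroup multiplication on the ideal $V$, pulled back through the identification of \ref{item:15}, is captured by this pairing rather than by some more intricate expression; this requires carefully tracking the limit as $\bar{\pi}\to 0$ in the Vinberg model, and is where the concrete constructions of \cite[Appendix D]{drinfeldGeometricConstantTerm2016b} are indispensable.
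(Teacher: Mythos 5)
Your proposal is correct and is essentially the paper's approach: the paper's entire proof is the one-line citation to \cite[Appendix D]{drinfeldGeometricConstantTerm2016b}, and what you do is spell out the fundamental-representation model from that appendix (with $\bar{\mathfrak{s}}(0)$ a collection of rank-one idempotents on highest-weight lines, $\obarGe$ the non-degeneracy locus where the highest-weight matrix coefficients are invertible), from which \ref{item:14} is essentially the definition of $\obarGe$ as the $G\times G$-orbit of the section, \ref{item:15} is the stabilizer computation you give, and \ref{item:16} is the rank-one composition computation showing the product stays in the nondegenerate locus precisely when $\xi_i(x_2^{-1}x_3\,v_{\omega_i})\ne 0$ for all $i$, i.e.\ when $x_2^{-1}x_3\in U^-B$. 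You are also right to flag that the only step requiring care is identifying the restriction of the semigroup law to the zero fiber with this pairing, which is exactly what the cited appendix supplies.
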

  Note that for $V, \mathring{V}, C$ as above, all arrows in the diagram \eqref{eq:16} are $G$-equivariant. Combining Proposition \ref{sec:vinb-semigr-horocycl} with Lemma \ref{sec:some-facts-about-2}, we obtain 
  \begin{corollary}
    \label{sec:vinb-semigr-horocycl-1}
    Operation $\ustar$, defined on the derived category \[D^b(G\backslash (G/U \times G/U^-)/T)\] via the formula \eqref{eq:17}, is left t-exact with respect to the perverse t-structure.  
  \end{corollary}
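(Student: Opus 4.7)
The plan is to deduce the corollary from Lemma~\ref{sec:some-facts-about-2} applied to the specific semigroup $V$ coming from the Vinberg construction, combined with Proposition~\ref{sec:vinb-semigr-horocycl} and a mild upgrade from the non-equivariant to the equivariant derived category. First, Proposition~\ref{sec:vinb-semigr-horocycl}\ref{item:15} identifies $\mathring{V}$ with the horocycle space $(G/U \times G/U^-)/T$ as a $G$-variety, while part~\ref{item:16} gives a concrete description of the correspondence $C$ appearing in \eqref{eq:16}. Combined with the observation (recorded in the text immediately preceding the corollary) that all three arrows of \eqref{eq:16} are $G$-equivariant, this shows that the formula \eqref{eq:17} makes sense on $D^b(G\backslash \mathring V)$ and produces exactly the operation $\ustar$ whose left t-exactness we want.

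Next I would propagate the left t-exactness of Lemma~\ref{sec:some-facts-about-2} from $D^b(\mathring V)$ to $D^b(G\backslash \mathring V)$. The cleanest route is to repeat the proof of the lemma verbatim in the equivariant setting: the closed subset $Z = V \setminus \mathring V$ is stable under $G\times G$ (hence under the $G$-action in question) and remains a two-sided semigroup ideal, so $j_!$, $j_*$, $i_*$, $i^!$ all lift equivariantly; the base change identity $\F \ustar \G \simeq j^*(j_!\F \star_V j_!\G)$ still holds; the vanishing of equivariant convolutions involving one factor supported on $Z$ still forces the result to be supported on $Z$; and the left t-exactness of $\star_V$ and $j_*$ together with the t-exactness of $j^*$ survive intact (the left t-exactness of $\star_V$ following from the affineness of multiplication via Artin's theorem, now on the relevant quotient stack). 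Alternatively and more conceptually, one may invoke that the forgetful functor
\[
  \operatorname{For}: D^b(G\backslash \mathring V) \to D^b(\mathring V)
\]
is conservative, t-exact up to a cohomological shift by $\dim G$, and commutes with pullback and proper pushforward along $G$-equivariant maps, and therefore with $\ustar$; this reduces the corollary directly to Lemma~\ref{sec:some-facts-about-2}.

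I do not anticipate a serious obstacle: the argument is essentially formal once the geometric setup of Proposition~\ref{sec:vinb-semigr-horocycl} is in hand. The only genuine care to exercise is in bookkeeping the shift conventions for the perverse t-structure on the quotient stack $G\backslash \mathring V$ and in verifying that the isomorphism of the proof of Lemma~\ref{sec:some-facts-about-2} is natural enough to lift to the equivariant derived category -- both of which are routine in the six-functor formalism available in each of the sheaf-theoretic settings of the paper.
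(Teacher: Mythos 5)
Your proposal is correct and follows essentially the same route as the paper: the paper's proof is exactly to observe that all arrows in the diagram \eqref{eq:16} are $G$-equivariant and then combine Proposition \ref{sec:vinb-semigr-horocycl} with Lemma \ref{sec:some-facts-about-2}. Your extra discussion of descending the t-exactness to the equivariant category just makes explicit a step the paper leaves implicit.
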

\subsection{Radon transform and convolution on monodromic categories.}
  We introduce some more notation. Let $H, H_1, H_2 \in \{U, U^{-}\}$, and consider a diagram
\[
  \begin{tikzcd}
    & G/H_1 \times G/H \times G/H_2 \arrow[ld,"p_{23}"']\arrow[rd,"p_{12}"]\arrow[r,"p_{13}"]  & G/H_1 \times G/H_2 \\
G/H_1 \times G/H &                                      & G/H \times G/H_2
  \end{tikzcd}
\]
Here $p_{ij}$ stands for the projection along factors $i,j$. Note that all arrows in the above diagram are equivariant with respect to the left diagonal action of $G$ and right diagonal action of $T$. We define the convolution operation $-\bistar{H}-$: for \[\F \in D^b(G \backslash (G/H_1 \times G/H)/T), \G \in D^b(G\backslash (G/H \times G/H_2)/T)\] write
\[
  \F\bistar{H}\G := p_{13!}(p_{12}^*\F\otimes p_{23}^*\G).
\]
Note that for $H_1 = H_2 = H = U$ this coincides with the monoidal structure $- \star_1 -$. Operations $\bistar{H}$ define an associative product on the sum of categories,
\[
  \bigoplus_{H_1, H_2 \in \{U, U^-\}}D^b(G\backslash (G/H_1 \times G/H_2)/T).
\]
where the product is defined to be zero when the definition of neither $\bistar{H}$ is applicable. 

 Let \[O_{w_0} = \{(xU^-, yU)T \in (G/U^- \times G/U)/T: x^{-1}y \in U^{-}U\}.\]
The closed embedding $\iota_{w_0}:O_{w_0} \to (G/U^- \times G/U)/T$ is $G$-equivariant. Define
\[
  \mathfrak{R} = \iota_{w_0!}\cc_{G \backslash O_{w_0}} \simeq \iota_{w_0*}\cc_{G \backslash O_{w_0}}.
\]

Let \[\tilde{O}_{w_0} = \{(xU^-, yU)T \in (G/U^- \times G/U)/T: x^{-1}y \in U^{-}B\}.\]
Let $\phi: \tilde{O}_{w_0} \to T$ stand for the projection \[(xU^-, yU) \mapsto x^{-1}y = u^{-}ut \mapsto t, u \in U, u^- \in U^-, t \in T,\] which is easy to see to be well-defined, $G$-equivariant (with trivial action of $G$ on $T$) and descends to the quotient by the right diagonal action of $T$. This trivializes $T$-torsor \[\tilde{O}_{w_0} \simeq O_{w_0}\times T.\] Write $\hat{\mathcal{L}}_\lambda$ for the pro-unit in the $\lambda$-monodromic category on $T$ and, abusing notation, its pullback to $\tilde{O}_{w_0}$ with perverse shift. 

The open embedding $\tilde{\iota}_{w_0}:\tilde{O}_{w_0} \to (G/U^- \times G/U)/T$ is $G$-equivariant. Define
\[
 \hat{\mathfrak{R}}_! = \tilde{\iota}_{w_0!}\hat{\mathcal{L}}_\lambda \in \pro D^b(G\backslash (G/U^- \times G/U)/T).
\]

Similarly, swap $U$ and $U^{-}$ in the above and define 
\[
 \hat{\mathfrak{R}}_* = \tilde{\iota}_{w_0*}\hat{\mathcal{L}}_\lambda\in \pro D^b(G\backslash (G/U \times G/U^{-})/T).
\]
We call the functor
\[
  \F \mapsto \F\bistar{U^-}\mathfrak{R}:  D^b(G\backslash (G/U \times G/U^{-})/T) \to D^b(G\backslash (G/U \times G/U)/T),
\]
the (!-version of) Radon transform functor. 

We record some standard properties of Radon transform we will need. By monodromic categories we mean, as before, categories of sheaves monodromic with respect to projections $G/U \to G/B$ and $G/U^{-}\to G/B^{-}$, where $B^-$ is the Borel subgroup containing $U^-$. Recall that, when treating monodromic categories, we shift the convolution product by $[\dim T]$.

\begin{proposition}
  \label{sec:radon-transf-conv}
  \begin{enumerate}[label=\alph*),ref=(\alph*)]

  \item \label{item:17} There is an isomorphism of functors, natural in $\F, \G:$
    \[
      \F \ustar \G \rsim \F \bistar{U^-} \mathfrak{R} \bistar{U} \G.
    \]
  \item\label{item:21} When restricted to monodromic categories, there is an isomorphism of functors natural in $\F:$
    \[
      \F \bistar{U^-} \mathfrak{R} \rsim \F \bistar{U^-} \hat{\mathfrak{R}}_!.
    \]
  \item\label{item:22} When restricted to monodromic categories, the Radon transform functor is an equivalence, with inverse given by
    \[
      \F \mapsto \F\bistar{U} \hat{\mathfrak{R}}_*.
    \]
  \end{enumerate}
  \begin{proof}
    Part \ref{item:17}   is a straightforward computation using the base change isomorphism and the isomorphism
    \[
      C \rsim \{(x_1U, x_2U^-, x_3U, x_4U^-)T \in (G/U \times G/U^-)^2/T : x_2^{-1}x_3 \in U^-U,\}
    \]
    where a single copy of $T$ acts diagonally on the right. The isomorphism is given by the formula
    \begin{multline}
      ((x_1U,x_2U^{-})T,(x_3U,x_4U^-)T) \mapsto \\
      \mapsto (x_1U,x_2U^{-1},x_3\phi(x_2U^-,x_3U)^{-1}U, x_4\phi(x_2U^-,x_3U)^{-1}U^-)T.
\end{multline}
    
   Part \ref{item:21} is a straightforward computation using base change and the fact that $\hat{\mathcal{L}}$ is the monoidal unit in the monodromic category of sheaves in $D^b(T)$, see references in Proposition \ref{braid_relations}.

   Part \ref{item:22} is completely analogous to \ref{braid_relations} \ref{item:20}. 
  \end{proof}
\end{proposition}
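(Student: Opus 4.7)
The plan is to prove the three parts in order, as they build on each other. For part \ref{item:17}, I would set up both sides as pushforwards from naturally defined correspondence spaces and compare them via base change. By definition the convolution $\F \ustar \G$ is a pushforward along $m' \colon C \to \mathring{V}$, where $C$ is identified in Proposition \ref{sec:vinb-semigr-horocycl} \ref{item:16} as the locus of quadruples $((x_1U, x_2U^-),(x_3U, x_4U^-))T$ with $x_2^{-1}x_3 \in U^-B$. On the other hand, the iterated convolution $\F \bistar{U^-} \mathfrak{R} \bistar{U} \G$ is, by base change, a pushforward from a space of sextuples where the middle pair $(x_2U^-, x_3U)$ is forced to lie in $O_{w_0}$, meaning $x_2^{-1}x_3 \in U^-U$. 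The Bruhat factorization $x_2^{-1}x_3 = u^- u t$ then gives a canonical isomorphism between these correspondence spaces: dividing the third and fourth cosets by $t \in T$ on the right transports the condition $x_2^{-1}x_3 \in U^-B$ to $x_2^{-1}x_3 \in U^-U$, and this is compatible with the right $T$-action used to form the quotients. Since $\mathfrak{R}$ is just the constant sheaf on $O_{w_0}$, the pullbacks and tensor products match after this change of coordinates.

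For part \ref{item:21}, the key input is that $\hat{\mathcal{L}}_\lambda$ is the monoidal pro-unit in the $\lambda$-monodromic category on $T$. Geometrically, $\tilde{O}_{w_0} \to O_{w_0}$ is a trivial $T$-torsor via the map $\phi$, and under the trivialization $\tilde{O}_{w_0} \simeq O_{w_0} \times T$ the sheaf $\hat{\mathfrak{R}}_!$ becomes $\cc_{O_{w_0}} \boxtimes \hat{\mathcal{L}}_\lambda$ (up to the relevant shift). The convolution $\F \bistar{U^-} \hat{\mathfrak{R}}_!$ therefore factors, by the projection formula, through an extra pushforward along the $T$-coordinate, and on a $\lambda$-monodromic sheaf $\F$ this pushforward against $\hat{\mathcal{L}}_\lambda$ yields $\F$ itself by the unit property. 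This matches the formula for $\F \bistar{U^-} \mathfrak{R}$, giving the desired natural isomorphism.

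Part \ref{item:22} is the analog of Proposition \ref{braid_relations} \ref{item:20}: the pro-object $\hat{\mathfrak{R}}_*$ plays the role of the costandard $\NN_{w_0^{-1}, \lambda}$ to the standard $\hat{\mathfrak{R}}_!$ supported at the longest element. The plan is to compute $\hat{\mathfrak{R}}_! \bistar{U} \hat{\mathfrak{R}}_*$ directly by base change: the relevant correspondence is cut out over $(G/U^-)\times(G/U)\times(G/U^-)/T^?$ by requiring the first and second pairs to lie in open Bruhat position, and one checks that the combined open condition together with $!$ on the first factor and $*$ on the second forces the output to be supported over the diagonal in $((G/U^-)\times(G/U^-))/T$ with fiber the pro-unit $\hat{\mathcal{L}}_\lambda$. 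The result should be the monoidal pro-unit $\dd_\lambda$ of the $\lambda$-monodromic category on $(G/U^-)^2/T$. The analogous computation with the roles reversed shows $\hat{\mathfrak{R}}_* \bistar{U^-} \hat{\mathfrak{R}}_! \simeq \dd_\lambda$ on the other side, which gives the inverse statement by naturality.

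I expect parts \ref{item:17} and \ref{item:21} to be essentially formal once the correspondence spaces are correctly set up; the main obstacle will lie in part \ref{item:22}, where I must carefully verify the $!$-versus-$*$ cancellation on the open Bruhat locus and make sure the monodromy data on the $T$-torsor matches the expected pro-unit rather than some twisted version. Once this cancellation is in place, naturality in $\F$ gives that the Radon transform and its purported inverse are mutually quasi-inverse equivalences of monodromic categories.
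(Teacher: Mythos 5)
Your proposal follows the paper's proof essentially step for step: part~(a) by matching correspondence spaces via the Bruhat factorization of $x_2^{-1}x_3$ (this is exactly the explicit isomorphism the paper writes down, using $\phi$ to scale the second pair and pass from the $T^2$-quotient description with $x_2^{-1}x_3\in U^-B$ to the diagonal-$T$ description with $x_2^{-1}x_3\in U^-U$); part~(b) by the unit property of $\hat{\mathcal{L}}_\lambda$ under the trivialization $\tilde{O}_{w_0}\simeq O_{w_0}\times T$; and part~(c) by the direct $!/*$-cancellation computation that underlies Proposition~\ref{braid_relations}~\ref{item:20}, to which the paper defers. The only minor difference is that you spell out the diagram chase in more words where the paper compresses it to ``straightforward'' and a citation; no new ideas are introduced and no steps diverge.
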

\begin{bremark}
 Without monodromicity assumption, the functor adjoint to the Radon transfrom is given by the appropriate $*$-convolution with object analogous to $\mathfrak{R}$, as opposed to $!$-convolution with a $*$-extension of a sheaf on an open subset, as in the proposition above.   
\end{bremark}
Corollary \ref{sec:some-facts-about-1} now follows directly from Corollary \ref{sec:vinb-semigr-horocycl-1} and the following
\begin{lemma}
  The functor $(-)\bistar{U} \hat{\mathfrak{R}}_*$ intertwines the products $\star_1 = \bistar{U}$ and $\ustar$ on the monodromic categories. Namely, for any $\F, \G \in \Hecke{1}_\lambda$, there is a natural isomorphism
  \[
    (\F \star_1 \G)\bistar{U} \hat{\mathfrak{R}}_* \simeq (\F\bistar{U} \hat{\mathfrak{R}}_*) \ustar (\G\bistar{U} \hat{\mathfrak{R}}_*).
  \]
\end{lemma}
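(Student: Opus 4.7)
My plan is to reduce the claim to a short algebraic manipulation using the three parts of Proposition \ref{sec:radon-transf-conv}. Starting from the right hand side, I would first apply part \ref{item:17} to rewrite the product $\ustar$ as a triple $\bistar{U^-}$--$\bistar{U}$-convolution with $\mathfrak{R}$ in the middle:
\[
(\F\bistar{U} \hat{\mathfrak{R}}_*) \ustar (\G\bistar{U} \hat{\mathfrak{R}}_*) \simeq (\F\bistar{U} \hat{\mathfrak{R}}_*) \bistar{U^-} \mathfrak{R} \bistar{U} (\G\bistar{U} \hat{\mathfrak{R}}_*).
\]
Then I would appeal to associativity of the total convolution on the sum $\bigoplus_{H_1,H_2 \in \{U,U^-\}} D^b(G \backslash (G/H_1 \times G/H_2)/T)$ to regroup this as
\[
\F \bistar{U} \bigl(\hat{\mathfrak{R}}_* \bistar{U^-} \mathfrak{R}\bigr) \bistar{U} \G \bistar{U} \hat{\mathfrak{R}}_*.
\]

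The key step is to identify the middle factor $\hat{\mathfrak{R}}_* \bistar{U^-} \mathfrak{R}$ with the pro-unit $\dd_\lambda$ of $\Hecke{1}_\lambda$. For this I would combine parts \ref{item:21} and \ref{item:22}: by \ref{item:22}, the Radon transform $R: \F' \mapsto \F' \bistar{U^-} \mathfrak{R}$ is an equivalence of monodromic categories with inverse $R^{-1}: \G' \mapsto \G' \bistar{U} \hat{\mathfrak{R}}_*$. Since $\dd_\lambda$ is the monoidal unit for $\bistar{U}$, we have $R^{-1}(\dd_\lambda) = \dd_\lambda \bistar{U} \hat{\mathfrak{R}}_* \simeq \hat{\mathfrak{R}}_*$, and hence
\[
\hat{\mathfrak{R}}_* \bistar{U^-} \mathfrak{R} = R(\hat{\mathfrak{R}}_*) \simeq R(R^{-1}(\dd_\lambda)) \simeq \dd_\lambda.
\]
Substituting this back gives $\F \bistar{U} \dd_\lambda \bistar{U} \G \bistar{U} \hat{\mathfrak{R}}_* \simeq (\F \star_1 \G) \bistar{U} \hat{\mathfrak{R}}_*$, which is the left hand side.

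The only genuine obstacle I anticipate is bookkeeping in the pro-category: $\hat{\mathfrak{R}}_*$ is a pro-object, so the associativity I use and the identification $R \circ R^{-1} \simeq \mathrm{Id}$ on monodromic pro-objects need to be applied to pro-objects and not just to objects of the underlying triangulated category. However, both are built into the setup of Proposition \ref{sec:radon-transf-conv} (which is already stated for $\hat{\mathfrak{R}}_!$, $\hat{\mathfrak{R}}_*$) and into the monoidal structure on completed monodromic categories reviewed before Proposition \ref{braid_relations}, so no new input is needed. All the functoriality of the isomorphisms in $\F$, $\G$ is inherited from the functoriality of the isomorphisms in Proposition \ref{sec:radon-transf-conv}.
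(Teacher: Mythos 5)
Your proposal is correct and takes essentially the same route as the paper: rewrite $\ustar$ via Proposition \ref{sec:radon-transf-conv} \ref{item:17}, regroup using associativity of the $\bistar{}$-operations, and cancel via the Radon-transform equivalence of \ref{item:22}, with the pro-object bookkeeping handled exactly as you indicate. The only (cosmetic) difference is where the cancellation happens: you collapse the kernel $\hat{\mathfrak{R}}_* \bistar{U^-} \mathfrak{R} \simeq \dd_\lambda$ using that $\dd_\lambda$ is the pro-unit, while the paper first replaces $\mathfrak{R}$ by $\hat{\mathfrak{R}}_!$ via \ref{item:21} and cancels at the object level, $(\F\bistar{U}\hat{\mathfrak{R}}_*)\bistar{U^-}\hat{\mathfrak{R}}_! \simeq \F$.
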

\begin{proof}
  By Proposition \ref{sec:radon-transf-conv} \ref{item:17} and \ref{item:21} we have an isomorphism
\[(\F\bistar{U} \hat{\mathfrak{R}}_*) \ustar (\G\bistar{U} \hat{\mathfrak{R}}_*) \simeq (\F\bistar{U} \hat{\mathfrak{R}}_*)  \bistar{U^-} \hat{\mathfrak{R}}_! \bistar{U}(\G\bistar{U} \hat{\mathfrak{R}}_*).\]
Using the associativity of $\star$-operations and Proposition \ref{sec:radon-transf-conv} \ref{item:22} we get
\begin{multline}
  (\F\bistar{U} \hat{\mathfrak{R}}_*)  \bistar{U^-} \hat{\mathfrak{R}}_! \bistar{U}(\G\bistar{U} \hat{\mathfrak{R}}_*) \simeq \\
 \simeq (\F\bistar{U} \hat{\mathfrak{R}}_*  \bistar{U^-} \hat{\mathfrak{R}}_!) \bistar{U}(\G\bistar{U} \hat{\mathfrak{R}}_*) \simeq \\
 \simeq (\F \star_1 \G)\bistar{U} \hat{\mathfrak{R}}_*.
\end{multline}
and so the result.
\end{proof}
\bibliography{bibliography}
\bibliographystyle{alpha}

\Addresses
\end{document}